\newcommand\abs[1]{\lvert#1\rvert}
\newcommand{\ds}{\displaystyle}
\newcommand{\mb}{\mathbb}
\newcommand{\mc}{\mathcal}
\newcommand{\ucite}[1]{\cite{#1}}
\def \a{\alpha} \def \b{\beta} \def \g{\gamma} \def \d{\delta}
\def \t{\theta} \def \T{\Theta}  \def \e{\epsilon}
\def \s{\sigma} \def \l{\lambda}  \def \o{\omega}
\def \O{\Omega} \def \D{\Delta}  \def \r{\rho}
\def \k{\kappa}  \def \G{\Gamma}
\def \di{\mathrm{dist}} \def \spa{\mathrm{span}}
\def\R{\mathbb{R}}
\def\vp{\varphi}
\def\tint{\text{int}}
\def \udi{\underline{\mathrm{dist}}}
\newtheorem{theorem}{Theorem}
\newtheorem{lemma}[theorem]{Lemma}
\newtheorem{cor}[theorem]{Corollary}
\theoremstyle{definition}
\newtheorem{definition}{Definition}
\newtheorem{remark}{Remark}
\newtheorem{prop}[theorem]{Proposition}
\title[Multiplicative Ergodic Theorem and Krein-Rutmann type Theorems]{On random linear dynamical systems in a Banach space. I. Multiplicative Ergodic Theorem and Krein-Rutmann type Theorems}
\author{}
\author[Lian]{Zeng Lian}
\address{Department of
Mathematical Science,
Loughborough University, United Kingdom}
\email{Z.Lian@lboro.ac.uk}
\author[Wang]{Yi Wang}
\address{Wu Wen-Tsun Key Laboratory, School of Mathematical Science, University of Science and Technology of China, Hefei, Anhui, 230026, People's Republic of China. Partially supported by NSF of China No.11371338, 11471305 and the Fundamental Research Funds for the Central Universities. }
\email{wangyi@ustc.edu.cn}
\begin{document}
\begin{abstract}

For linear random dynamical systems in a separable Banach space $X$, we derived a series of Krein-Rutman type Theorems with respect to co-invariant cone family with rank-$k$, which present a (quasi)-equivalence relation between the measurably co-invariant cone family and the measurably dominated splitting of  $X$. Moreover, such (quasi)-equivalence relation turns out to be an equivalence relation whenever (i) $k=1$; or (ii) in the frame of the Multiplicative Ergodic Theorem with certain Lyapunov exponent being greater than the negative infinity.
 For the second case, we thoroughly investigated the relations between the Lyapunov exponents, the co-invariant cone family and the measurably dominated splitting for linear random dynamical systems in $X$.
 \end{abstract}
\maketitle

\section{Introduction}

Lyapunov exponents \cite{Lya} play important roles in the study of the behavior of
dynamical systems. One of the most celebrated theorems on Lyapunov exponents is the so-called the Multiplicative Ergodic Theorem, which was first obtained by Oseledets \cite{O} in 1968.  This theorem plays a crucial role in the Pesin theory for describing the dynamics of nonuniformly hyperbolic diffeomorphisms on compact manifolds. During recent decades, many remarkable works and extensions of multiplicative ergodic theorem have been carried out from finite-dimensional systems to infinite-dimensional systems, and from deterministic dynamical systems to random dynamical systems (see \cite{LL,M,R,T} and the references therein).

In a separable Banach space $X$, the multiplicative Ergodic Theorem proposes an effective approach for decoupling $X$ into finitely (or infinitely)-many
random co-invariant subspaces which are strongly measurable and the ``angles'' between them are tempered. The existence of Lyapunov exponents with different values gives rise to exponential dichotomies, that is a well-known ``spectral-gap" condition (see e.g., \cite{ChLe,CLL,Pazy}), between the various invariant subspaces. Such gap has been attracting interest broadly in deterministic systems for a long time because of its close relationship with the theory of invariant manifolds. 

As was pointed out in \cite{M-PS,LLZ}, a genuinely essential condition for the existence of invariant manifolds is the existence of co-invariant cone family. Compared with the spectral gap condition, the invariant cones condition turns out to be more intuitive from the viewpoint of geometric description. While for the differential equations in question (see e.g., \cite{BJ,CFNT,M-PS,Tem}), the cones condition can be verified by showing, roughly speaking, that the vector field on the boundaries of the cones point inward.

We will show in Section \ref{S:TheoremA} that a family of finitely (or infinitely)-many nested invariant cones can be constructed under the assumptions of Multiplicative Ergodic Theorem in a separable Banach space $X$ (see Lian and Lu \cite{LL}).
To be more precisely, let us consider the linear cocycle over the metric dynamical system $(\O,\mc F,\mb P,\t)$ generated by a strongly measurable random variable $A:\O\to L(X)$,
\begin{equation}\label{T-n-expression}
T^n(\o):=A(\t^{n-1}\o)\cdots A(\o),\ n\in \mb N,
\end{equation}
 where $L(X)$ is the space of bounded linear operators from $X$ to itself; and moreover,  we assume that $A(\omega)$ is injective almost everywhere.  Define that
\begin{equation}\label{E:principle-LE}
\l_0(\o)=\lim_{n\to\infty}\frac1n\log\|T^n(\o)\|,
\end{equation}
\begin{equation}\label{E:ess-principle-LE}
 \k(\o)=\lim_{n\to\infty}\frac1n\log\|T^n(\o)\|_{\k},
 \end{equation}
 where $\|T^n(\o)\|$ is the norm of the operator $T^n(\o)$; and $\|T^n(\o)\|_\k$ is the Kuratowski measure (see \cite{Nuss}) of noncompactness of operator $T^n(\o)$. We then obtain the following (see also Theorem \ref{T:SplitImpliesConeSequence} in Section \ref{S:SettingsAndMainResults}):

 \vskip 3mm
\noindent {\bf Theorem A.}\label{T:SplitImpliesConeSequence-introduction}
{\it $\,$ Assume that $\log^+\|A\|\in \mc L^1(\O,\mc F,\mb P)$ and $\l_0>\k$ $\mb P$-a.e.. Then
there exist a $\t$-invariant subset $\tilde{\O}\subset \O$ of full measure, a $\t$-invariant function $k:\tilde{\O}\to \mb N\cup\{+\infty\}$ and a family of measurable cones $\{C_i(\cdot)\}_{1\le i<k(\cdot)+1}$ such that for each $\o\in \tilde{\O}$ and ${1\le i<k(\cdot)+1}$, 
 \begin{itemize}
\item[(i)] $A(\o)C_i(\o)\subset C_i(\t\o);$

\item[(ii)] there is a tempered function $\chi_i:\O\to [1,+\infty)$ such that \begin{equation}\label{Strong-focusing-1}
\udi\left(A(\o)C_i(\o),X\setminus C_i(\t\o)\right)\ge \frac1{\chi_i(\o)},
     \end{equation}
     where "$\udi$" is the separation index defined in {\rm (\ref{E:SeparationIndex})}.
\end{itemize}
Moreover, one has $C_i(\o)\subset C_{i+1}(\o)$ and
 \begin{equation}\label{E:ConeI-1}
 C_i(\o)=\left\{v\in X:\ |v-\pi^i(\o)v|_{\o,i}\le l_i(\o)|\pi^i(\o)v|_{\o,i}\right\}.
\end{equation}
Here $\pi^i$ is the projection onto $\bigoplus_{j=1}^i E_j(\o)$ associated with the splitting
 \[X=\Big(\bigoplus_{j=1}^i
E_j(\o)\Big)\oplus\Big(\big(\bigoplus_{j=i+1}^{k(\o)}E_j(\o)\big)\oplus
F(\o)\Big),\]
$l_i(\cdot)$ is a positive tempered function and $|\cdot|_{\o,i}$ is the Lyapunov norm defined in \eqref{E:LyapunovNorm}.
\vskip 3mm}

\noindent By virtue of Theorem A, it is clear that the spectral gap condition between the measurable invariant subspaces $E_j(\o),j=1,\cdots, k(\o),$ obtained in the multiplicative Ergodic Theorem, serves only as one of the accessible conditions to ensure the the existence of co-invariant cones.
\vskip 2mm

The present paper is motivated by the above insight on the co-invariant cone family.  We will thoroughly investigate the relations between  the invariant cone condition and measurably invariant splitting for linear random dynamical systems in a separable Banach space $X$, regardless of the situation whether multiplicative ergodic theorem is valid or not.

A geometrical insight of the structure of $C_i(\o)$ in \eqref{E:ConeI-1} will involve in a useful concept as a {\it cone of rank-$k$ (abbr. $k$-cone)}. Roughly speaking, a $k$-cone $C\subset X$ is a closed set that consists of straight lines and which contains a linear subspace of dimension $k$ and no linear subspace of higher dimension (see Definition \ref{D:k-cone}). To the best of our knowledge, $k$-cone was independently introduced  by Krasnoselskii et al. \cite{KLS} and Fusco and Oliva \cite{FO1}. In particular, given any traditional convex cone $K\subset X$, it is clear that $C=K\cup (-K)$ defines a cone of rank-$1$.

When $k=1$,  the cone invariance condition in Theorem A(i) simply yields that $A(\cdot)$ is a positive linear operator in $X$. Deterministic as well as random dynamical systems, generated by (family of) positive linear operators, have been widely studied because these systems provide relevant mathematical framework for the qualitative analysis of a variety of deterministic and random models via mappings and differential equations.

The celebrated Krein-Rutmann Theorem \cite{Deim,KR} (or Perron-Frobenius Theorem \cite{G} in finite-dimensions) had initiated research on the relation between the positivity (i.e., the cone invariance condition with $k=1$) and  the invariant splitting of $X$. It states that if an operator $A$ is compact and strongly positive with respect to a solid convex cone $K$ in a Banach space $X$, then there exists an invariant splitting $E\oplus F$ of $X$ with ${\rm dim}E=1$ such that $E\subset \{0\}\cup {\rm Int}(K\cup (-K))$, $F\cap (K\cup (-K))=\{0\}$, the spectral radius $r>0$ is the eigenvalues of $A$ corresponding to $E$, and all the other spectral points of $A$ satisfy $\abs{\l}<r$.

Krein-Rutmann Theorem had been extended to strongly positive and compact deterministic linear skew-product (semi)flows in finite-dimensional spaces \cite{Ru-2} and in Banach spaces \cite{PT2,Mier-1}. For random dynamical systems, Arnold et al. \cite{AGD} provided a version of random Perron-Frobenius Theorem for products of positive random matrices. Recently, Mierczy\'{n}ski and Shen \cite{MierSh-1} have established a random Krein-Rutmann Theorem. By utilizing the Multiplicative Ergodic Theorem developed in \cite{LL} for Banach space $X$, they \cite{MierSh-1} proved that $X$ possesses a measurably tempered invariant splitting $E(\cdot)\oplus F(\cdot)$ with ${\rm dim}E(\cdot)={\rm codim}F(\cdot)=1$ such that these measurable bundles are exponentially separated.  Moreover, many significant applications of Krein-Rutmann Theorem for deterministic as well as random dynamical systems can be found in \ucite{AGD,HuPo-1,HuPo-2,MP-N,MSh-book,MierSh-2,MierSh-3,MierSh-04,Pola,PT1}.

For a general solid $k$-cone $C\subset X$,  Fusco and Oliva \cite{FO1} established a generalized Perron-Frobenius theorem with respect to $C$ when ${\rm dim}X<\infty$, which states that if an operator $A$ strongly preserves $C$ (i.e.,  $A(C\setminus\{0\})\subset \tint C$), then $X$ admits an invariant splitting $E\oplus F$ with ${\rm dim}E=k$ such that $E\subset \{0\}\cup {\rm int}C$, $F\cap C=\{0\}$, and any spectral point $\l$ (resp. $\mu$) of $A$ restricted to $E$ (resp. $F$) satisfies $\abs{\mu}<\abs{\l}$. Independently, Krasnoselskii et al. \cite{KLS} obtained the generalized Krein-Rutman theory with respect to the $k$-cone $C$ in a Banach space.

Tere\v{s}\v{c}\'{a}k \cite{Te} first considered the deterministic linear skew-product semiflows which strongly preserves a solid $k$-cone $C$. In \cite{Te},  two complementary continuous invariant subbundles $E(\cdot)$ and $F(\cdot)$ were successfully constructed such that ${\rm dim}E(\cdot)={\rm codim}F(\cdot)=k$ and these bundles are exponentially separated (see in \cite{CLM-P} a different approach for scalar parabolic equations). Such exponentially separated continuous subbundles  has been successfully used in the study of transversality of invariant manifolds, structural stability and the dynamics for nonlinear scalar semiliear parabolic equations (see e.g., \cite{CCH,Joly-R,ShenYi-JDDE96,SWZ}).

Most recently, the present authors \cite{LW} investigated the linear random dynamical system $(\O,\mc F,\mb P,\t,T,X)$ in \eqref{T-n-expression}, with ${\rm dim}X<\infty$, which strongly preserves a solid $k$-cone $C$. 
Under some general assumptions, they showed that such system \eqref{T-n-expression} admits a tempered invariant splitting $X=E(\cdot)\oplus F(\cdot)$ with ${\rm dim}E(\cdot)={\rm codim}F(\cdot)=k$; and moreover, there exists a $\t$-invariant positive measurable function $\d:\O\to (0,\infty)$ such that
\begin{equation}\label{In:dominated-splitting}
\liminf_{n\to\infty} \frac1n\log \frac{|T^n(\o)u|}{|T^n(\o)v|}\ge\d(\o)
\end{equation}
for any $u\in E(\o)\setminus\{0\}$ and $v\in F(\o)\setminus\{0\}$. Here $\abs{\cdot}$ is the norm in $X$.
We hereafter call that $(\O,\mc F,\mb P,\t,T,X)$ possesses a {\it Measurably Dominated Splitting} if \eqref{In:dominated-splitting} is satisfied.

Motivated by Theorem A, we will focus on, in the present paper, the linear random dynamical system with a general $k$-cone $C$ in a separable Banach space $X$. A general $k$-cone $C$ is called {\it Measurably Contracting} with respect to system $(\O,\mc F,\mb P,\t,T,X)$ if $C$ satisfies (i)-(ii) of Theorem A.

We will thoroughly investigate the relations between measurably contracting $k$-cones, measurably dominated splitting and the multiplicative ergodic theorem. More precisely, among others, we will show the following (See also Theorems \ref{T:ConeImpliesSplit},\ref{T:SplitImpliesConeW} and \ref{T:SplitImpliesCone1d} in Section \ref{S:SettingsAndMainResults}):

\vskip 3mm
\noindent {\bf Theorem B.}\label{In:ConeImpliesSplit}
{\it $\,$  Let $X$ be a separable Banach space $X$. Then for system $(\O,\mc F,\mb P,\t,T,X)$ defined in \eqref{T-n-expression}, we have
  \begin{itemize}
\item[(i)] Existence of the measurably contracting $k$-cone implies existence of the measurably dominated splitting of $X$ with $\dim E(\cdot)=k$.

 \item[(ii)] Existence of the measurably dominated splitting of $X$ with $\dim E(\cdot)=k$ implies
 existence of the eventually measurably contracting $k$-cone.

 \item[(iii)] When $k=1$, there exists a measurably contracting $1$-cone if and only if there exists a
 measurably dominated splitting $\dim E(\cdot)=1$.
    \end{itemize}
    }
\noindent Here the eventually measurably contracting property is defined in \eqref{E:-eventually measurable contracting}, which is weaker than the measurably contracting property.
\vskip 2mm

Our Theorem B(i) can be seen as {\it a random Krein-Rutmann Theorem with respect to general $k$-cones}, which extends all the above mentioned Krein-Rutmann type theorem either from deterministic systems to random systems, or from $k=1$ to $k>1$. We note that for random systems, due to the absence of compactness property of the base system $(\O,\mc F,\mb P,\t)$, one needs to develop new techniques to tackle the problem; while for $k>1$, the essential difference between $1$-cone $K\cup (-K)$ and $k$-cones is due to the lack of convexity, which makes the main difficulty in the investigation with respect to cones of high-rank.

Theorem B(i)-(ii) further presented a sort of (quasi)-equivalence relation between the measurably contracting $k$-cone and the measurably dominated splitting of $X$ with $\dim E(\cdot)=k$. In particular, when $k=1$, they are indeed equivalent to each other. Such (quasi)-equivalence will help one to understand the invariant cone condition from the viewpoint of the invariant splitting of the phase space, and vice versa, in the setting of the infinite-dimensional random dynamical systems. This will shed a light on investigating
 the theory of random inertial invariant manifolds for random dynamical systems in a Banach space.

 \vskip 1mm
It also deserves to point out that Theorem B is obtained without the integrability assumption $\log^+\|A\|\in \mc L^1(\O,\mc F,\mb P)$. If additionally, one imposes the integrability assumption as in the multiplicative ergodic theorem (\cite{LL}), then we have the following (see also a more general Theorem \ref{T:Cone&SplitMET} in Section \ref{S:SettingsAndMainResults}):

\vskip 3mm
\noindent {\bf Theorem C.}\label{In:ConeImpliesSplit-equivalence}
{\it $\,$   Assume that $\log^+\|A\|\in \mc L^1(\O,\mc F,\mb P)$ in \eqref{T-n-expression} and $\l_0>-\infty$ $\mb P$-a.e., then the following conditions are equivalent:
  \begin{itemize}
\item[(a)] Existence of measurably contracting cone of nontrivial-rank (i.e., $k\ne 0$);

 \item[(b)] Existence of measurably dominated splitting of $X$;

  \item[(c)] $\l_0>\k$, $\mb P$-a.e..
    \end{itemize}
    }
\vskip 2mm
\noindent As shown in \cite{LL}, the multiplicative ergodic theorem proposed three types of possible situations of Lyapunov exponents being greater than $\k$: (i) None, (ii) finitely many or (iii) infinitely many, where case (ii) or case (iii) will happen if  $\l_{0}>\k$. However, there are no efficient methods in computing these quantities in general. So, it would be an interesting question that, under what condition there is at least one Lyapunov exponent which is greater than $\k$. Our Theorem C together with the Multiplicative Ergodic Theorem (\cite{LL}) implies that existence of
a measurably contracting $k$-cone will guarantee that the total multiplicity of Lyapunov exponents
greater than $\k$ is at least $1$ when $\k=-\infty$, and is at least $k$ when $\k>-\infty$.

The results obtained in this paper would have significant potential applications to the study of asymptotic dynamics of nonlinear random and stochastic partial differential equations.

The paper is organized as follows. In Section 2, we present our setting and state our main results. In Section 3, we state some known facts,  investigate and present several new properties of $k$-cones, which are crucial for the proof of the main results. In Section 4, we give the proof of Theorem B. In Section 5, we will investigate the relation between the eventually measurably contracting property and the measurably dominated splitting in Probability. Based on this, we will prove Theorem C in Section 6. Moreover, Theorem A will be also proved in Section 6. In the Appendix, we collect several well known results, and also present some technical results which is useful to our main results.

\section{Settings and Main Results}\label{S:SettingsAndMainResults}
\subsection{Settings}\label{S:Settings}
In this subsection, we introduce fundamental notion and basic settings of the problem.
\subsubsection{Random dynamical system}\label{S:RandomDynamicalSystem}
Let $(X,\abs{\cdot})$ be a separable Banach space with Borel $\sigma$-algebra $\mc B$. Let $(\Omega, \mc F, \mb P)$ be a complete probability space and let $\theta:\Omega\to\Omega$ be a bijection such that both $\theta$ and $\theta^{-1}$ are $\mb P$-invariant measurable maps.
A random dynamical system on measurable space $(X, \mc B)$ over the so called metric dynamical system $(\Omega,\mc F,\mb P,\theta)$ is a family $\{\varphi(\omega,t):t\in\mb T\}$, where $\mb T=\mb Z,\mb Z^+,\mb R, \text{ or } \mb R^+$, of measurable transformations on $X$ satisfying the following  cocycle properties: For $\mb P$-almost all $\omega\in \Omega$
\begin{itemize}
\item $\varphi(\omega,0)=id$;
\item $\varphi(\omega.s+t)=\varphi(\theta^s\omega,t)\circ \varphi(\omega,s)$ for all $s,t\in\mb T$.
\end{itemize}

In this paper, we only consider the one-side discrete time systems, i.e., $\mb T= \mb Z^+$, and denote that
$$\vp(\o)=\vp(\o,1), \text{ and } \vp(\o,n)=\vp(\t^{n-1}\o)\circ\cdots\circ \vp(\o),n\in\mb N.$$

A $\vp$-invariant measure is a probability measure $\mu$ on $\O\times X$ with marginal $\mb P$ on $\O$ which is invariant under the induced skew product map
$$\T: \O\times X\to \O\times X,\  (\o,x)\to (\t\o,\vp(\o)x).$$

In this paper, we study the linear cocycle over the metric dynamical system $(\O,\mc F,\mb P,\t)$ generated by the random variable $A:\O\to L(X)$,
\begin{equation}\label{E:LinearCocycle}
T^n(\o):=A(\t^{n-1}\o)\cdots A(\o),\ n\in \mb N,
\end{equation}
 where $L(X)$ is the space of bounded linear operators from $X$ to itself. We will always assume $A(\o)$ is injective almost everywhere.
 \vskip 3mm

\subsubsection{k-cones}\label{S:k-cones}
 In this subsection, we define $k$-cones and some corresponding notions.
 \begin{definition}\label{D:k-cone}
 A closed subset $C$ of $ X$ is called a $k$-dimensional closed cone of $X$ if the following are satisfied:
 \begin{itemize}
 \item[i)] For any $v\in C$ and $l\in \mb R$, $lv\in C$;
 \item[ii)] There exists a $k$-dimensional subspace $K\subset X$ such that $K\subset C$;
 \item[iii)] There exists a $k$-codimensional subspace $L\subset X$ such that $L\cap C=\{0\}$.
 \end{itemize}
 \end{definition}
A $k$-cone $C$ is called {\em solid} if $\tint C$ is not empty.

 \subsection{Main Results}\label{S:MainResults}

For the system $(\O,\mc F,\mb P,\t,T,X)$ defined in Section \ref{S:RandomDynamicalSystem}, we consider a solid cone system $\mc C:=\{C(\o)\}_{\o\in\O}$. We call that $\mc C$ is {\it Measurably Contracting } with respect to system $(\O,\mc F,\mb P,\t,T,X)$ if the following are satisfied:
 \begin{itemize}
 \item[{\bf C1})] There exists a measurable $\t$-invariant function $k:\O\to \mb N$ such that for $\mb P$-a.e. $\o$, $C(\o)$ is a closed solid $k(\o)$-cone and
 $$A(\o)C(\o)\subset C(\t\o).$$
 \item[{\bf C2})] There exists a measurable splitting $X=E_0(\o)\oplus F_0(\o)$ such that $\dim E_0(\o)=\text{codim} F_0(\o)=k(\o)$, and  $E_0(\o)\subset {\rm int }C(\o)\cup\{0\},\ F_0(\o)\cap C(\o)=\{0\}$, for $\mb P$-a.e.;
 \item[{\bf C3})] There exists a measurable function $\chi:\O\to [1,+\infty)$  such that for $\mb P$-a.e.
 $$\lim_{n\to\pm\infty}\frac1n \log \chi(\t^n\o)=0$$ and
 $$\udi\left(A(\o)C(\o),X\setminus C(\t\o)\right)\ge \frac1{\chi(\o)}.$$
 \end{itemize}
Here "$\udi$" is defined as in (\ref{E:SeparationIndex}).\\

\begin{remark}\label{R:SNormal}
Without losing any generality, we can additionally assume that\\
\begin{itemize}
\item[{\bf C4})]  $\,\,$$\udi(F_0(\o),C(\o))>\frac1{\chi(\o)}$, for $\mb P$-a.e. $\o$.
\end{itemize}
\medskip
Actually, for a given cone family $\mc C$ satisfying C1)-C3), one can construct another cone family $\mc C'$ which satisfies C1)-C4) by choosing a proper $\chi$. For sake of convenience, we will assume C4) when proving the theorems, and the construction of $\mc C'$ is left to appendix.
\end{remark}

A weak version of the measurably contracting property is as follows. We say a cone family $\mc C$ is {\it eventually measurable contracting} with respect to system $(\O,\mc F,\mb P,\t,T,X)$ if C1), C2) and the following is satisfied:\\

\noindent {\bf C3')} There exists measurable functions $\chi:\O\to [1,+\infty)$ and $N:\O\to \mb N$ such that for $\mb P$-a.e.
 $$\lim_{n\to\pm}\frac1n \log \chi(\t^n\o)=0$$ and for any $m\ge N(\o)$
 \begin{equation}\label{E:-eventually measurable contracting}
 \udi\left(T^m(\o)C(\o),X\setminus C(\t^m\o)\right)\ge \frac1{\chi(\t^m\o)}.
 \end{equation}

\vskip 3mm
We say the system $(\O,\mc F,\mb P,\t,T,X)$ admits a {\it Measurably Dominated Splitting} if
there is a $\t$-invariant set $\tilde \O\subset \O$ of full $\mb P$-measure and a  $\t$-invariant measurable function  $k:\O\to \mb N$ such that the following are satisfied:
 \begin{itemize}
 \item[{\bf D1)}] For all $\o\in\tilde \O$, there exists a invariant splitting
 $$X=E(\o)\oplus F(\o),$$
 with $\dim E(\o)=\text{codim} F(\o)=k(\o)$, where, by invariance,  we mean that  $$A(\o)E(\o)=E(\t\o)\text{ and }A(\o)F(\o)\subset F(\t\o);$$
 \item[{\bf D2)}] Letting $\pi_E(\o),\pi_F(\o)$ be the projections associated to the splitting $X=E(\o)\oplus F(\o)$, then $\pi_{E},\pi_{F}$ are strongly measurable, and
 $$\lim_{n\to\pm\infty}\frac1n\log\underline\di(E(\t^n\o),F(\t^n\o))=0,$$
 $$i.e. \lim_{n\to\pm\infty}\frac1n\log\max\{\|\pi_E(\t^n\o)\|,\|\pi_F(\t^n\o)\|\}=0;$$
\item[{\bf D3)}] There exists a $\t$-invariant measurable function $\d:\O\to (0,\infty)$, and a measurable function $K(\cdot):\O\to[1,\infty)$ such that
$$\lim_{n\to\pm\infty}\frac1n\log K(\t^n\o)=0;$$
and moreover,  for all $\o\in\tilde \O$ and $n\ge 0$,
    \begin{equation}\label{E:UniformSeparation}
 \left\|\left(T^n(\o)|_{E(\o)}\right)^{-1}\right\|\left\|T^n(\o)|_{F(\o)}\right\|\le K(\o) e^{-n\d(\o)}.
 \end{equation}

  \end{itemize}
 \begin{remark}\label{R:UniformSeparation}
It is easy to see that D3) implies that for any $u\in E(\o)\setminus\{0\}$ and $v\in F(\o)\setminus\{0\}$
 $$\liminf_{n\to\infty} \frac1n\log \frac{|T^n(\o)u|}{|T^n(\o)v|}\ge\d(\o).$$
 \end{remark}
\vskip 3mm
We say the system $(\O,\mc F,\mb P,\t,T,X)$ has an {\it Measurably Dominated Splitting in Probability} if D3) is replaced by the following:\\

\noindent {\bf D3')}  There exist measurable functions $\d:\O\to (0,\infty)$ and $K:\O\to[1,\infty)$  such that
 for all $\o\in\tilde \O$ there exist a sequence of integers $\{n_j(\o)\}_{j\in \mb Z}$ such that
 $$n_{0}(\o)=0,\ n_{j}(\o)<n_{j+1}(\o),\ \liminf_{j\to\infty}\frac{j}{n_j(\o)}>0,$$
 $$\d(\t^{n_{j}(\o)})=\d(\o),\ \lim_{j\to\pm\infty}\frac1j\log K(\t^{n_{j}(\o)}\o)=0,$$
 and for $j\ge0$
    \begin{equation}\label{E:NonUniformSeparation}
 \left\|\left(T^{n_j(\o)}(\o)|_{E(\o)}\right)^{-1}\right\|\left\|T^{n_j(\o)}(\o)|_{F(\o)}\right\|\le K(\o) e^{-j\d(\o)}.
 \end{equation}
\vskip 3mm
Our main results are as follows:

 \begin{theorem}\label{T:ConeImpliesSplit}
 Existence of measurably contracting $k$-cone family $\mc C$ implies existence of measurably dominated splitting with $\dim E(\cdot)=k(\cdot)$ and $F(\cdot)\cap C(\cdot)=\{0\}$.
 \end{theorem}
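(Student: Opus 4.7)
My plan is to construct $E(\o)$ by a backward-pullback procedure, construct $F(\o)$ as the graph of a forward-invariant correction map over $F_0(\o)$, and then read off the dominated splitting estimates from the cone contraction in C3).

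For $E$, set $E_n(\o) := T^n(\t^{-n}\o)\,E_0(\t^{-n}\o)$. By C1) and C2), each $E_n(\o)$ is a $k$-dimensional subspace of $\tint C(\o)\cup\{0\}$. On the space of $k$-dimensional subspaces inside $C(\o)$ one has a natural Hilbert-type projective metric (the rank-$k$ generalization used by Tere\v{s}\v{c}\'ak and by Fusco--Oliva, developed in Section 3 of the paper); the separation C3) forces a single application of $A$ to be a strict contraction in this metric, with per-step factor controlled by $\chi$. Iterating and combining with temperedness of $\chi$, the sequence $\{E_n(\o)\}$ becomes Cauchy in gap metric and converges to some $E(\o)\subset C(\o)$. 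Since each $E_n$ is $k$-dimensional and $C(\o)$ contains no subspace of dimension exceeding $k$, the limit satisfies $\dim E(\o)=k$; strong measurability is inherited from $E_0$, and invariance $A(\o)E(\o)=E(\t\o)$ follows by passing to the limit in $A(\o)E_n(\o)=E_{n+1}(\t\o)$.

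For $F$, since $T$ is not invertible one cannot simply pull back a complementary cone. Instead, I would exploit C4): $F_0(\o)$ is uniformly transverse to $C(\o)$, so $X=E(\o)\oplus F_0(\o)$ with tempered projections, and seek $F(\o)$ as a graph $\{v+S(\o)v:v\in F_0(\o)\}$ for some $S(\o)\in L(F_0(\o),E(\o))$. Writing $A(\o)$ in block form with respect to the splittings $E\oplus F_0$ over $\o$ and $\t\o$, the requirement $A(\o)F(\o)\subset F(\t\o)$ becomes a graph-transform functional equation for $S$. The expansion of $A$ on $E(\o)$ relative to the induced map on $F_0(\o)$, quantified by C3) through the cone contraction, makes this equation a strict contraction on a suitable Banach space of tempered operator-valued functions; its unique fixed point defines $F(\o)$, forward invariant, with $F(\o)\cap C(\o)=\{0\}$ (the correction can be made small by starting the iteration from the zero map), and strong measurability is passed through from the iteration.

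With $E,F$ in hand, D1) is built in, D2) follows from temperedness of $\chi$ and of the projections $\pi^0_E,\pi^0_F$, and D3) is extracted from C3) as follows: for unit vectors $u\in E(\o)$ and $v\in F(\o)$, $T^n(\o)u$ lies in $\tint C(\t^n\o)$ with separation bounded below by $1/\chi(\t^n\o)$ from $X\setminus C(\t^n\o)$, while $T^n(\o)v\in X\setminus C(\t^n\o)$; elementary Banach-space geometry then converts this separation into $|T^n v|/|T^n u|\le K(\o)e^{-n\d(\o)}$, where $\d(\o)>0$ arises from iterating the projective-metric contraction together with a Kingman-type subadditive argument applied to the logarithm of the per-step contraction factor. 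The principal obstacle will be the construction of $F$ in the absence of an inverse for $T$: finding the correct weighted Banach space so that the graph transform is contracting, while simultaneously ensuring strong measurability and temperedness of the fixed point $S(\o)$, is the heaviest technical step. A companion difficulty, inherent to $k>1$, is that the set of $k$-subspaces inside $C$ lacks the convexity that makes the classical $k=1$ Hilbert-metric contraction clean, so Step 1 will rest substantially on the cone-geometry results developed in Section 3.
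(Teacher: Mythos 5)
Your architecture coincides with the paper's proof: the paper constructs $E(\o)=\mc E(\o)$ as the gap-metric limit of the pullback sequence $T^{n}(\t^{-n}\o)E_0(\t^{-n}\o)$, using the per-step contraction of the angle-index $\a_C$ (not a true projective metric, because of non-convexity, but related to the gap metric through Lemma \ref{L:ProjectiveMetric}) together with Birkhoff's ergodic theorem applied to $\log\tau_S$ (Lemma \ref{L:ContractionOnCone3}, Proposition \ref{P:InvariantSubspace1}); and it constructs $F(\o)$ as the graph of an operator $\Psi(\o):F_0(\o)\to\mc E(\o)$ solving exactly the graph-transform equation you describe, written as the explicit convergent series \eqref{E:GraphFormula} rather than as a fixed point of a contraction (Proposition \ref{P:InvariantSubspace2}). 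D3) is then read off from Lemma \ref{L:ExponentialSeparation}, whose rate comes, as you say in your final clause, from the iterated contraction of $\a_C$.

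The one step where your reasoning as written would fail is the justification of $F(\o)\cap C(\o)=\{0\}$ by the remark that ``the correction can be made small by starting the iteration from the zero map.'' The invariance requirement determines $\Psi(\o)$ uniquely; its norm is tempered (see \eqref{E:GraphNorm}) but in general not small, so $F(\o)$ may be far from $F_0(\o)$ and smallness cannot be arranged. The paper closes this gap dynamically: if $0\neq v\in F(\o)\cap C(\o)$, then by the forward contraction on directions inside the cone (Lemma \ref{L:ContractionOnCone2'}) the direction of $T^n(\o)v$ converges exponentially fast to $\mc E(\t^n\o)$, while invariance keeps $T^n(\o)v\in F(\t^n\o)$, whose separation from $\mc E(\t^n\o)$ decays at most subexponentially because the projections associated with $X=\mc E(\cdot)\oplus F(\cdot)$ are tempered (\eqref{E:TemperedAngleInvariant}); this is a contradiction. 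You need this argument, or an equivalent a priori transversality estimate on the fixed point, to obtain the conclusion $F(\cdot)\cap C(\cdot)=\{0\}$, which is part of the statement of the theorem. A smaller caution: the separation of $T^n u$ from $X\setminus C(\t^n\o)$ at a fixed time $n$ does not by itself ``convert'' into the exponential ratio bound of D3); the paper implements your idea by applying the iterated contraction to vectors $v+cu$ with $v\in\mc E(\o)$, $u\in F_0(\o)$ and letting $c\to 0$, which is where the quantitative work of Lemma \ref{L:ExponentialSeparation} lives.
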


 \begin{theorem}\label{T:SplitImpliesConeW}
 Existence of measurably dominated splitting with $\dim E(\cdot)=k(\cdot)$ implies existence of eventually measurably contracting $k(\cdot)$-cone family $\mc C$.
 \end{theorem}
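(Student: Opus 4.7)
The plan is to construct the cone family directly from the given dominated splitting and to extract the eventual contracting property from D3). Let $\pi_E(\o),\pi_F(\o)$ be the projections associated with $X=E(\o)\oplus F(\o)$; by D2), they are strongly measurable and their operator norms are tempered. I would define
\begin{equation*}
C(\o):=\{v\in X : |\pi_F(\o)v|\le |\pi_E(\o)v|\}.
\end{equation*}
Strong measurability of the projections makes $C(\cdot)$ measurable, and since $C(\o)$ contains the $k(\o)$-dimensional subspace $E(\o)$ (in its interior) and meets $F(\o)$ only at the origin, $C(\o)$ is a closed solid $k(\o)$-cone satisfying C1) and C2), with $E(\o)\subset \tint C(\o)\cup\{0\}$ and $F(\o)\cap C(\o)=\{0\}$.

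For the eventual invariance, decompose a nonzero $v\in C(\o)$ as $v=u+w$ with $u=\pi_E(\o)v$ and $w=\pi_F(\o)v$, so $|w|\le|u|$. Invariance of the splitting gives $T^m(\o)v=T^m(\o)u+T^m(\o)w$ with $T^m(\o)u\in E(\t^m\o)$ and $T^m(\o)w\in F(\t^m\o)$, and D3) yields
\begin{equation*}
\frac{|T^m(\o)w|}{|T^m(\o)u|}\le\bigl\|T^m(\o)|_{F(\o)}\bigr\|\cdot\bigl\|(T^m(\o)|_{E(\o)})^{-1}\bigr\|\cdot\frac{|w|}{|u|}\le K(\o)e^{-m\d(\o)}.
\end{equation*}
Set $N(\o):=\min\{m\in\mb N:K(\o)e^{-m\d(\o)}\le 1/2\}$; since $K$ is tempered and $\d>0$, $N(\o)$ is a.e.\ finite and measurable. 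For every $m\ge N(\o)$ the image $T^m(\o)C(\o)$ lies in the strict sub-cone $\{v : |\pi_F(\t^m\o)v|\le \tfrac12|\pi_E(\t^m\o)v|\}$.

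To verify C3'), pick $a\in T^m(\o)C(\o)$ with $|\pi_F(\t^m\o)a|\le\tfrac12|\pi_E(\t^m\o)a|$ and $b\in X\setminus C(\t^m\o)$ with $|\pi_F(\t^m\o)b|>|\pi_E(\t^m\o)b|$. Combining the triangle-inequality estimates $|\pi_F(\t^m\o)b|\le|\pi_F(\t^m\o)a|+\|\pi_F(\t^m\o)\||a-b|$ and $|\pi_E(\t^m\o)b|\ge|\pi_E(\t^m\o)a|-\|\pi_E(\t^m\o)\||a-b|$ with the strict inequality $|\pi_F(\t^m\o)b|>|\pi_E(\t^m\o)b|$ gives
\begin{equation*}
(\|\pi_E(\t^m\o)\|+\|\pi_F(\t^m\o)\|)|a-b|>\tfrac12|\pi_E(\t^m\o)a|\ge \tfrac13|a|,
\end{equation*}
where the last step uses $|a|\le\tfrac32|\pi_E(\t^m\o)a|$. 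Setting $\chi(\o):=3(\|\pi_E(\o)\|+\|\pi_F(\o)\|)$, which is tempered by D2), produces $\udi(T^m(\o)C(\o),X\setminus C(\t^m\o))\ge 1/\chi(\t^m\o)$ simultaneously for every $m\ge N(\o)$, i.e., C3').

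The main obstacle will be the geometric step in the previous paragraph: in a general Banach space $\udi$ is defined through the ambient norm with no orthogonality available, so the separation must be bootstrapped purely from the projection norms. The payoff is that the same tempered function $\chi$ obtained from D2) works uniformly for all $m\ge N(\o)$. Apart from this, the argument is a direct translation of the dominated-splitting estimate D3) into cone contraction, with the eventual (rather than immediate) invariance being forced by the tempered constant $K(\o)$ appearing in D3).
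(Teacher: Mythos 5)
There is a genuine gap: your cone family fails condition C1), which the definition of \emph{eventually measurably contracting} still requires in full, namely the one-step invariance $A(\o)C(\o)\subset C(\t\o)$ for a.e.\ $\o$ (only the separation estimate is relaxed to ``eventually'' in C3')). With $C(\o)=\{v: |\pi_F(\o)v|\le|\pi_E(\o)v|\}$, the best one-step control D3) gives is
\begin{equation*}
\frac{|A(\o)\pi_F(\o)v|}{|A(\o)\pi_E(\o)v|}\le \bigl\|(A(\o)|_{E(\o)})^{-1}\bigr\|\,\bigl\|A(\o)|_{F(\o)}\bigr\|\cdot\frac{|\pi_F(\o)v|}{|\pi_E(\o)v|}\le K(\o)e^{-\d(\o)},
\end{equation*}
and since $K(\o)\ge 1$ is only tempered (it can be arbitrarily large on a set of positive measure), this bound typically exceeds $1$: a boundary vector of $C(\o)$ can be mapped outside $C(\t\o)$ after a single step. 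Proving $T^m(\o)C(\o)\subset C(\t^m\o)$ only for $m\ge N(\o)$ does not salvage C1), and C1) is used essentially downstream (e.g.\ in the proof of Theorem \ref{T:ConeImpliesSplitW} and Lemmas \ref{L:TInvariantSplit1}--\ref{L:TInvariantSplit2}), so it cannot be quietly dropped.

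The standard repair, and the one the paper uses, is to replace the time-zero ratio by the weighted series
\begin{equation*}
\zeta_\o(v)=\sum_{n=0}^\infty\frac{|T^n(\o)\pi_F(\o)v|}{|T^n(\o)\pi_E(\o)v|}\,e^{\frac12 n\d(\o)},\qquad C(\o)=\{v:\zeta_\o(v)\le 1\},
\end{equation*}
an adapted-norm (Lyapunov metric) construction that absorbs the non-uniformity constant $K$: a one-line shift of the index shows $\zeta_{\t\o}(A(\o)v)\le e^{-\frac12\d(\o)}\zeta_\o(v)$, so the cone is genuinely invariant in one step, while D3) and the temperedness of $K$ show that $\zeta_\o$ is comparable to your ratio $|\pi_F v|/|\pi_E v|$, so C2) and the solid $k$-cone structure survive. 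Your verification of C3') via the triangle inequality on the projections is essentially the paper's argument and carries over to the corrected cone; only the definition of $C(\o)$ needs to be upgraded.
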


 \begin{theorem}\label{T:ConeImpliesSplitW}
 Existence of eventually measurably contracting $k$-cone family $\mc C$ implies existence of measurably dominated splitting in probability with $\dim E(\cdot)=k(\cdot)$ and $F(\cdot)\cap C(\cdot)=\{0\}$.
 \end{theorem}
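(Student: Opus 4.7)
The plan is to reduce Theorem~\ref{T:ConeImpliesSplitW} to the already-established Theorem~\ref{T:ConeImpliesSplit} by passing to an \emph{induced cocycle} on a good base set, where the eventually contracting bound becomes a genuine uniform contracting bound. Since $\chi$ and $N$ are finite a.e.\ by C3'), one can choose constants $M_0, M_1$ large enough that the set
\[
\O_0 := \{\o \in \O : N(\o) \leq M_0,\ \chi(\o) \leq M_1\}
\]
satisfies $\mb P(\O_0) > 0$. Define on $\O_0$ the delayed first-return time $\tau(\o) := \min\{n \geq M_0 : \t^n \o \in \O_0\}$, finite $\mb P$-a.e.\ by Poincar\'e recurrence, the induced base transformation $\t_0 := \t^{\tau}$ preserving $\mb P_0 := \mb P|_{\O_0}/\mb P(\O_0)$, and the induced cocycle
\[
\tilde A(\o) := T^{\tau(\o)}(\o),\qquad \tilde T^j(\o) := T^{\tau^{(j)}(\o)}(\o),\quad \tau^{(j)}(\o) := \sum_{i=0}^{j-1}\tau(\t_0^i \o).
\]

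Next I would verify that the induced system $(\O_0,\mc F|_{\O_0},\mb P_0,\t_0,\tilde T,X)$ together with the restricted family $\{C(\o)\}_{\o\in\O_0}$ is \emph{measurably contracting} in the sense of C1)--C3). Conditions C1) and C2) are inherited from the original system: preservation of $C$ under $T$ gives $\tilde A(\o)C(\o)\subset C(\t_0\o)$, and the splitting $E_0\oplus F_0$ of C2) restricts to $\O_0$. For C3), since $\tau(\o) \geq M_0 \geq N(\o)$ for $\o \in \O_0$, relation \eqref{E:-eventually measurable contracting} with $m = \tau(\o)$ yields
\[
\udi\bigl(\tilde A(\o)C(\o),\ X \setminus C(\t_0 \o)\bigr) \geq \frac{1}{\chi(\t_0 \o)} \geq \frac{1}{M_1},
\]
because $\t_0\o\in\O_0$; the associated $\tilde\chi\equiv M_1$ is trivially tempered under $\t_0$. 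Applying Theorem~\ref{T:ConeImpliesSplit} to this induced system then produces a measurably dominated splitting $X = \tilde E(\o)\oplus \tilde F(\o)$ over $(\O_0,\t_0)$ with $\dim\tilde E = k$, $\tilde F\cap C = \{0\}$, strongly measurable tempered projections, a $\t_0$-invariant rate $\tilde\d > 0$, and a tempered $\tilde K\geq 1$ controlling $\|(\tilde T^j|_{\tilde E})^{-1}\|\,\|\tilde T^j|_{\tilde F}\|$ by $\tilde K(\o)\,e^{-j\tilde\d(\o)}$.

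Finally, I would extend $\tilde E,\tilde F$ to a splitting $E(\o)\oplus F(\o)$ on a $\t$-invariant set of full $\mb P$-measure via forward cocycle invariance from the most recent backward return to $\O_0$ (using injectivity of $A$ to invert $T$ on its range), and set $n_j(\o) := \tau^{(j)}(\o)$ for $\o \in \O_0$ with the obvious shifted definition off $\O_0$ so that $n_0(\o) = 0$. Kac's lemma combined with Birkhoff's ergodic theorem applied to $\tau$ under $\t_0$ gives $n_j(\o)/j \to \int\tau\,d\mb P_0 = 1/\mb P(\O_0) < \infty$ $\mb P_0$-a.s., whence $\liminf_j j/n_j(\o) > 0$; the exponential bound of the previous paragraph translates directly into \eqref{E:NonUniformSeparation} upon setting $\d := \tilde\d$ and $K := \tilde K$, and $F\cap C = \{0\}$ at every $\o$ follows by backward propagation from $\O_0$ as in the proof of Theorem~\ref{T:ConeImpliesSplit}. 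The main obstacle is measure-theoretic bookkeeping in this extension: one must check that $E,F$ are globally strongly measurable, that the associated projections remain tempered under $\t$ (not only along the sparse returns), and that the $n_j(\o)$ are measurably and consistently defined for all $\o$; each of these is handled by the standard tower construction over the base $(\O_0,\t_0)$ once the induced splitting is in hand.
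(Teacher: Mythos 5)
Your overall strategy is the same as the paper's: induce the cocycle on a positive-measure set where the ``eventually contracting'' bound becomes a genuine one-step bound, apply Theorem~\ref{T:ConeImpliesSplit} to the induced system, and transport the resulting splitting back. However, the specific inducing scheme you propose has a genuine gap. The ``delayed first-return map'' $\t_0\o=\t^{\tau(\o)}\o$ with $\tau(\o)=\min\{n\ge M_0:\ \t^n\o\in\O_0\}$ is in general \emph{not} a measure-preserving transformation of $(\O_0,\mb P_0)$ -- indeed it need not even be injective. For a concrete counterexample take $\O=\mb Z/5\mb Z$ with $\t(x)=x+1$, $\mb P$ uniform, $\O_0=\{0,1\}$ and $M_0=2$: then $\tau(0)=5$ and $\tau(1)=4$, so both points of $\O_0$ are sent to $0$, and the pushforward of $\mb P_0$ is $\delta_0$. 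Since Theorem~\ref{T:ConeImpliesSplit} is only available over a metric dynamical system (an invertible measure-preserving base), it cannot be applied to your induced system; and the subsequent invocation of Kac's lemma and Birkhoff's theorem ``applied to $\tau$ under $\t_0$'' -- which is what gives $\liminf_j j/n_j(\o)>0$ and the temperedness statements in D3') -- also requires $\t_0$ to preserve $\mb P_0$. The paper circumvents exactly this: it uses the \emph{ordinary} first-return map $\t_{m,n}$ to $\O_{m,n}=\{m\le N(\o)\le n\}$ (which is invertible and measure-preserving) and then applies Theorem~\ref{T:ConeImpliesSplit} to its $n$-th power $\t_{m,n}^{n}$ with cocycle $T^{n}_{m,n}$. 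Since each return time is at least $1$, one step of $\t_{m,n}^{n}$ corresponds to at least $n\ge N(\o)$ steps of the original system, so C3') yields the required one-step separation, while powers of an invertible measure-preserving map remain invertible and measure-preserving. If you replace your delayed return map by this power-of-the-return-map device, your argument goes through; your additional trick of also bounding $\chi\le M_1$ on the base set is a harmless simplification of the paper's temperedness verification for $\chi\circ\t_{m,n}^{n}$.

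A secondary, smaller point: you work with a single positive-measure set $\O_0$ and extend ``by backward propagation.'' The union $\bigcup_{n\in\mb Z}\t^{n}\O_0$ has full measure only under ergodicity, which is not assumed in the theorem; the paper instead exhausts $\O$ by countably many sets $\O_{m_p,m_{p+1}}$ and must then prove that the splittings obtained on overlapping induced systems agree and that the glued splitting is $\t$-invariant (Lemmas~\ref{L:TInvariantSplit1} and~\ref{L:TInvariantSplit2}). These consistency checks use the cone dynamics (forward convergence of $k$-planes in $C$ and the separation of $F$ from $C$) and are more than bookkeeping, so they should not be waved away, though they are not where the essential difficulty of your write-up lies.
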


\vskip 2mm

 In general, existence of measurably contracting cone family is a sufficient condition for existence of measurably dominated splitting. However, starting with measurably dominated splitting, if $\dim E=1$, one can construct a measurably contracting cone family.

 \begin{theorem}\label{T:SplitImpliesCone1d}
 There exists measurably contracting $1$-cone family $\mc C$ if and only if there exists measurably dominated splitting with $\dim E(\o)=1$ $\mb P$-a.e..
 \end{theorem}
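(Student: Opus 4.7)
The ``only if'' direction is the special case $k=1$ of Theorem~\ref{T:ConeImpliesSplit}, so my plan concentrates on the converse: starting from a measurably dominated splitting $X=E(\o)\oplus F(\o)$ with $\dim E(\o)=1$, I will construct a measurably contracting $1$-cone family $\mc C=\{C(\o)\}_{\o\in\O}$. Since $E(\o)$ is one-dimensional and $T^n(\o)E(\o)=E(\t^n\o)$, the scalar $\nu_n(\o):=|T^n(\o)e|/|e|$ is well-defined for any $e\in E(\o)\setminus\{0\}$ and $\|(T^n(\o)|_{E(\o)})^{-1}\|=\nu_n(\o)^{-1}$, so D3) reads $\|T^n(\o)|_{F(\o)}\|\le K(\o)\,\nu_n(\o)\,e^{-n\d(\o)}$ for all $n\ge 0$. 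I then introduce the Lyapunov-type norm
\[
\|v_F\|^{*}_\o:=\sup_{n\ge 0}e^{n\d(\o)/2}\,\frac{|T^n(\o)v_F|}{\nu_n(\o)},\qquad v_F\in F(\o),
\]
which satisfies $|v_F|\le\|v_F\|^{*}_\o\le K(\o)|v_F|$ (lower bound at $n=0$, upper bound from D3)), hence is a norm on $F(\o)$ equivalent to $|\cdot|$ with tempered constants. A direct rearrangement using $\nu_n(\t\o)=\nu_{n+1}(\o)/\nu_1(\o)$ then yields the key one-step contraction
\[
\|A(\o)v_F\|^{*}_{\t\o}\le e^{-\d(\o)/2}\,\nu_1(\o)\,\|v_F\|^{*}_\o,\qquad v_F\in F(\o).
\]

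Next I will define the candidate cone
\[
C(\o):=\bigl\{v\in X:\ \|\pi_F(\o)v\|^{*}_\o\le|\pi_E(\o)v|\bigr\}.
\]
Verifying the axioms of Definition~\ref{D:k-cone} with $k=1$ is brief: closure follows since $v\mapsto\|\pi_F(\o)v\|^{*}_\o-|\pi_E(\o)v|$ is continuous; scalar invariance is clear; $E(\o)\subset C(\o)$ supplies the required $1$-subspace; and any plane in $C(\o)$ must meet $F(\o)$ in a nonzero $v_F$ with $\|v_F\|^{*}_\o\le 0$, which is impossible, so $C(\o)$ is a $1$-cone. Solidity follows because a sufficiently small $|\cdot|$-ball around any unit $e\in E(\o)$ lies in $C(\o)$, by the tempered equivalence together with the tempered projection norms from D2). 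For C1) I combine $|A(\o)\pi_E(\o)v|=\nu_1(\o)|\pi_E(\o)v|$ with the one-step contraction to obtain, for $v\in C(\o)$,
\[
\|\pi_F(\t\o)A(\o)v\|^{*}_{\t\o}=\|A(\o)\pi_F(\o)v\|^{*}_{\t\o}\le e^{-\d(\o)/2}\nu_1(\o)|\pi_E(\o)v|=e^{-\d(\o)/2}|\pi_E(\t\o)A(\o)v|,
\]
so $A(\o)C(\o)\subset\{v:\|\pi_F(\t\o)v\|^{*}_{\t\o}\le e^{-\d(\o)/2}|\pi_E(\t\o)v|\}\subsetneq C(\t\o)$. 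Condition C2) is then immediate with $E_0=E$, $F_0=F$: one has $E(\o)\setminus\{0\}\subset\tint C(\o)$ since $\|0\|^{*}_\o=0<|v_E|$, and $F(\o)\cap C(\o)=\{0\}$ since $\|v_F\|^{*}_\o\ge|v_F|$.

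The main obstacle will be the quantitative verification of C3): converting the strict gap $1-e^{-\d(\o)/2}$ between $A(\o)C(\o)$ and $\partial C(\t\o)$, measured in the Lyapunov norm, into a tempered lower bound $1/\chi(\o)$ on the separation index $\udi(A(\o)C(\o),X\setminus C(\t\o))$ measured in the original norm of $X$. The plan is to project onto $E(\t\o)$ and $F(\t\o)$ and bound $|v-w|$ from below for $v\in A(\o)C(\o)$ and $w\in X\setminus C(\t\o)$ by exploiting the difference of their ratios $\|\pi_F\cdot\|^{*}/|\pi_E\cdot|$ on either side of the boundary, then passing to the original norm via the tempered constants $K(\o)$ from D3) and $\|\pi_E(\o)\|,\|\pi_F(\o)\|$ from D2). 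Since products of tempered functions remain tempered, the resulting $\chi$ is tempered; measurability of $\mc C$ follows from the strong measurability of $\pi_E,\pi_F$ and of the iterates $T^n$ entering the Lyapunov norm. The enhanced property C4) is then obtained, if needed, via the post-processing described in Remark~\ref{R:SNormal}.
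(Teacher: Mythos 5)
Your construction is essentially the paper's: the paper takes $C(\o)=\{v:\zeta_\o(v)\le1\}$ with $\zeta_\o(v)=\sum_{n\ge0}e^{n\d(\o)/2}\,|T^n(\o)v^F|/|T^n(\o)v^E|$, which is exactly your ratio cone with the supremum in $\|\cdot\|^{*}_\o$ replaced by a sum, and both arguments hinge on the same point --- that $\dim E(\o)=1$ makes $T^n(\o)|_{E(\o)}$ act by the scalar $\nu_n(\o)$, so the ratio contracts by $e^{-\d(\o)/2}$ in one step and its variation is controlled by tempered constants. The paper carries out your sketched verification of C3) by showing $\zeta_\o$ is Lipschitz on a neighborhood of $C(\o)\cap S$ with tempered Lipschitz constant and combining this with the one-step contraction; your two-case estimate on the $E$- and $F$-projections is a routine equivalent, so the proposal is correct.
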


By virtue of Theorems \ref{T:ConeImpliesSplit}-\ref{T:ConeImpliesSplitW}, existence of measurably dominated splitting $E(\cdot)\oplus F(\cdot)$ in probability (with $\dim E(\cdot)=k(\cdot)$) is the weakest one compared with other properties. The following Theorem indicates that if asymptotic expanding rates of $T^n|_E$ is well controlled, the one can also construct a measurably contracting $k(\cdot)$-cone family.

\begin{theorem}\label{T:SplitImpliesConeL1}
 Assume additionally that

 {\bf D4)} $\log^+\|A\|\|(A|_{E})^{-1}\|\in \mc L^1(\O,\mc F, \mb P)$.

\noindent Then existence of measurably dominated splitting {\em in probability} with $\dim E(\cdot)=k(\cdot)$ implies existence of measurably contracting $k$-cone family $\mc C$ .
 \end{theorem}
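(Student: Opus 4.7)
The plan is to strengthen the eventually measurably contracting cone family yielded by the arguments in Theorem \ref{T:SplitImpliesConeW} (applied at the sparse random times $\{n_j(\o)\}$ from D3)) into a bona fide measurably contracting cone family using D4 to control intermediate single-step growth.

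First, I would use D4 and Birkhoff's ergodic theorem to deduce that $\log^+\|A(\o)\|$ and $\log^+\|(A(\o)|_{E(\o)})^{-1}\|$ are orbit-tempered: the Birkhoff averages $\tfrac{1}{n}\sum_{i=0}^{n-1}\log^+\|A(\t^i\o)\|\|(A(\t^i\o)|_{E(\t^i\o)})^{-1}\|$ converge $\mb P$-a.s.\ to a finite invariant function, so each single-step quantity $\|A(\t^n\o)\|\cdot\|(A(\t^n\o)|_{E(\t^n\o)})^{-1}\|$ is a tempered cocycle. This furnishes the pointwise single-step control absent from D3'), which only ensures exponential separation along the sparse times $\{n_j\}$.

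Second, combining the splitting from D1), the tempered projections from D2), the subsequential decay with positive density from D3'), and the single-step tempered bounds from the previous step, I would construct tempered adapted Lyapunov-type norms $|\cdot|_\o^E$ on $E(\o)$ and $|\cdot|_\o^F$ on $F(\o)$, of the form
\[ |v|_\o^E=\sup_{n\ge 0}\frac{|T^n(\o)v|}{g_E(n,\o)},\qquad |v|_\o^F=\sup_{n\ge 0}\frac{|T^n(\o)v|}{g_F(n,\o)}, \]
with cocycle-adapted weights chosen so that (i) both norms are tempered-equivalent to $|\cdot|$; (ii) $A(\o)$ contracts $|\cdot|^F$ by a factor $\nu(\o)$ and expands $|\cdot|^E$ by at least $\mu(\o)$ at a single step; and (iii) there exists a tempered $\a(\o)>0$ solving the cohomological inequality $\a(\t\o)\ge (1+\eta(\o))\,\nu(\o)\mu(\o)^{-1}\a(\o)$ for some tempered $\eta(\o)>0$. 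The solvability of (iii) rests on the fact that $\mb E[\log(\nu/\mu)]\le -c\,\d$ (with $c=\liminf j/n_j$ coming from D3')), so the backward Birkhoff sum used to build $\a$ converges for $\eta$ taken small enough.

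Third, I would set
\[ C(\o):=\{v\in X:\,|\pi_F(\o)v|_\o^F\le\a(\o)\,|\pi_E(\o)v|_\o^E\}, \]
which is a closed solid $k(\o)$-cone by construction. Axiom C1) reduces to the inequality $(\nu/\mu)\a(\o)\le\a(\t\o)$, which holds by (iii); C2) is immediate with $E_0=E$ and $F_0=F$; and C3) follows from the tempered gap $\eta(\o)>0$ in (iii), after translating the strict $|\cdot|^E$--$|\cdot|^F$ separation into a lower bound on the index $\udi$ using the tempered equivalence of the adapted norms with $|\cdot|$.

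The principal obstacle is Step 2: translating the merely subsequential exponential separation of D3') plus the single-step tempered growth of D4) into adapted norms whose one-step contraction/expansion factors satisfy the cohomological condition (iii). A supremum only over $\{n_j(\o)\}$ gives control at those times but not in a single step, while a supremum over all $n\ge 0$ risks being infinite unless the weights $g_E,g_F$ are carefully designed to absorb the intermediate growth estimated through D4). The measurability of the suprema in $\o$, together with the backward-orbit construction of a tempered $\a(\o)$ solving (iii), are the core technical points; the latter in particular requires the subsequential decay and the D4)-based temperedness to cooperate so as to force the relevant Birkhoff average to be strictly negative.
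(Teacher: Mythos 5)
Your strategy is genuinely different from the paper's, but as written it has a real gap precisely at the step you yourself flag as the ``principal obstacle,'' and that step is the entire content of the theorem. The paper does not attempt a direct adapted-norm construction from D1)--D3') at all. Instead it rescales the generator, setting $\tilde A(\o)=A(\o)/\|A(\o)\|$, so that $\log^+\|\tilde A\|=0$ and, by D4), $\log^+\|(\tilde A|_E)^{-1}\|=\log^+\bigl(\|A\|\,\|(A|_E)^{-1}\|\bigr)\in\mc L^1$. Kingman's theorem then makes $\tilde\l_E^-$, $\tilde\l_E$, $\tilde\l_F$, $\tilde\k_F$ well defined and finite, the subsequential decay of D3') yields the strict gap $-\tilde\l_E^->\tilde\l_F+\liminf_j\frac{j}{n_j}\d$, and the Multiplicative Ergodic Theorem converts this into an exact Oseledets splitting with a genuine spectral gap below all of $E(\cdot)$; the cone is then the one already built in Lemma \ref{L:InvariantCi} from the Lyapunov norms of Theorem A. In other words, the paper lets the MET do exactly the interpolation from sparse-time control to every-time control that you propose to do by hand.

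Two concrete problems with your plan. First, your opening step is wrong as stated: D4) is integrability of the \emph{product} $\log^+\bigl(\|A\|\,\|(A|_E)^{-1}\|\bigr)$, and this does not imply that $\log^+\|A\|$ and $\log^+\|(A|_E)^{-1}\|$ are separately integrable or separately orbit-tempered (the two factors can compensate each other while each is individually non-integrable). Only the product is tempered along orbits; any construction of $g_E,g_F$ must be phrased in terms of the product, or one must first perform the paper's rescaling. Second, and more seriously, you never produce the weights $g_E,g_F$ or the one-step factors $\nu,\mu$, and you never prove $\mb E[\log(\nu/\mu)]\le-c\d$; you only assert that this ``rests on'' D3'). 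But D3') gives information solely at the random times $n_j(\o)$, and the passage from $\liminf_j j/n_j>0$ plus an integrable bound on the per-step degradation of the $E$--$F$ ratio to a measurable choice of single-step factors with strictly negative mean is precisely the nontrivial analytic content here. Without carrying that out (including the finiteness, measurability and temperedness of the suprema defining $|\cdot|_\o^E,|\cdot|_\o^F$, and the solvability of the cohomological inequality for $\a$), the argument is a program rather than a proof. I would either complete that construction in detail or adopt the paper's shortcut through Kingman and the MET, which disposes of all of these issues at once.
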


  In the following, we will investigate the relation between the measurably contracting cone family, measurably dominated splitting and the Lyapunov exponents given in
 the multiplicative ergodic theorem (see \cite{LL}). Let $\l_0$ and $\k$ be defined in \eqref{E:principle-LE} and \eqref{E:ess-principle-LE}.

   \begin{theorem}\label{T:SplitImpliesConeSequence}
Assume that $\log^+\|A\|\in \mc L^1(\O,\mc F,\mb P)$ and $\l_0>\k$ $\mb P$-a.e.. Then
there exist $\t$-invariant functions $k:\O\to \mb N\cup{+\infty}$, and $\{k_i:\O\to \mb N\}_{1\le i< k+1}$ with $k_{i}<k_{i+1}$, and a family of measurably contracting $k(\cdot)$-cones  $\{C_i(\cdot)\}_{1\le i<k(\cdot)+1}$  such that $C_i(\cdot)$ are $k_i$-cones; and when $k(\cdot)>1$, one has
$$C_i(\cdot)\subset C_{i+1}(\cdot),\text{ for }\ 1\le i< k(\cdot)+1,\ \mb P-a.e..$$
\end{theorem}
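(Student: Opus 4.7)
The plan is to deduce this theorem from the Multiplicative Ergodic Theorem of Lian and Lu \cite{LL} together with an explicit construction of cones along the lines anticipated in \eqref{E:ConeI-1}. Under the hypotheses $\log^+\|A\|\in\mc L^1$ and $\l_0>\k$, the MET supplies a $\t$-invariant full-measure set $\tilde\O$, a $\t$-invariant function $k:\tilde\O\to\mb N\cup\{+\infty\}$, and an invariant splitting $X=E_1(\o)\oplus\cdots\oplus E_{k(\o)}(\o)\oplus F(\o)$, with finite-dimensional blocks $E_j(\o)$, Lyapunov exponents $\l_1>\l_2>\cdots>\k$, and tempered angles between the various subspaces. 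I would set $k_i(\o):=\sum_{j=1}^i\dim E_j(\o)$; this is $\t$-invariant and strictly increasing in $i$.

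For each $i$ with $1\le i<k(\o)+1$, construct the Lyapunov norm $|\cdot|_{\o,i}$ adapted to the spectral gap between $\l_i$ and $\l_{i+1}$, so that $A(\o)$ expands vectors in $\bigoplus_{j\le i}E_j(\o)$ by a factor bounded below by $e^{\l_i-\e_i}$ and vectors in $\bigl(\bigoplus_{j>i}E_j(\o)\bigr)\oplus F(\o)$ by a factor bounded above by $e^{\l_{i+1}+\e_i}$, with $\e_i<(\l_i-\l_{i+1})/4$, and such that the comparison between $|\cdot|_{\o,i}$ and the ambient $|\cdot|$ is controlled by a tempered function. Letting $\pi^i(\o)$ be the projection onto $\bigoplus_{j\le i}E_j(\o)$ along the complement and $l_i$ a positive tempered function to be chosen, define
\[
C_i(\o):=\{v\in X:|v-\pi^i(\o)v|_{\o,i}\le l_i(\o)\,|\pi^i(\o)v|_{\o,i}\}.
\]
By construction, $C_i(\o)$ contains the $k_i(\o)$-dimensional subspace $\bigoplus_{j\le i}E_j(\o)$ and meets the complementary closed subspace only at $0$, so it is a $k_i$-cone.

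To verify C1)--C3) I would decompose any $v\in C_i(\o)$ as $v=u+w$ with $u=\pi^i(\o)v$, use invariance of the splitting to conclude that $A(\o)u$ and $A(\o)w$ lie in the respective invariant pieces at $\t\o$, and apply the contraction estimate to obtain $|A(\o)w|_{\t\o,i}\le e^{-\d_i}l_i(\o)\,|A(\o)u|_{\t\o,i}$ for some $\d_i>0$. Choosing $l_i$ to be $\t$-invariant (or slowly tempered) then gives $A(\o)C_i(\o)\subset C_i(\t\o)$ with a strict residual gap, and translating back to $|\cdot|$ via the tempered comparison yields the separation bound $\udi\bigl(A(\o)C_i(\o),X\setminus C_i(\t\o)\bigr)\ge 1/\chi_i(\o)$ with $\chi_i$ tempered, giving C1) and C3). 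Condition C2) is immediate with $E_0(\o)=\bigoplus_{j\le i}E_j(\o)$ and $F_0(\o)=\bigl(\bigoplus_{j>i}E_j(\o)\bigr)\oplus F(\o)$. For the nested inclusion $C_i\subset C_{i+1}$, use the identity $\pi^i=\pi^i\circ\pi^{i+1}$ to re-express the defining inequality of $C_i$ in terms of $\pi^{i+1}$, then compare $|\cdot|_{\o,i}$ with $|\cdot|_{\o,i+1}$ through a tempered factor and choose the sequence $\{l_i\}$ inductively so that membership in $C_i(\o)$ forces the defining inequality of $C_{i+1}(\o)$.

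The main obstacle will be the construction of the family of Lyapunov norms $\{|\cdot|_{\o,i}\}$ with simultaneously quantitative contraction on each $(i,i+1)$-splitting, tempered comparison with $|\cdot|$, and mutually tempered comparison across different indices $i$, in the Banach setting where the tail subspace $F(\o)$ is only controlled by the Kuratowski exponent $\k$; in the case $k(\o)=+\infty$ one must also ensure that the estimates degrade only at a tempered rate as $i$ grows. Once these quantitative estimates are in hand, strong measurability of $C_i(\o)$ and $\pi^i(\o)$ and the temperedness of $\chi_i$ and $l_i$ follow from standard measurable-selection and Birkhoff-type ingredients already present in the MET framework of \cite{LL}.
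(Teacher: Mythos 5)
Your proposal is correct and follows essentially the same route as the paper: the authors likewise invoke the Multiplicative Ergodic Theorem of \cite{LL}, build the Lyapunov norms $|\cdot|_{\o,i}$ adapted to each spectral gap, define the cones exactly as in \eqref{E:ConeI-1}, prove co-invariance and the tempered separation bound by the projection/contraction estimate you describe (their Lemmas \ref{L:LyapunovMetric} and \ref{L:InvariantCi}), and obtain the nesting $C_i\subset C_{i+1}$ by the same inductive choice of tempered functions $l_i$ using $\pi^i=\pi^i\circ\pi^{i+1}$ (their Lemma \ref{L:CiSubCi+1}). The ``main obstacle'' you flag is precisely where the paper's quantitative work lies, and it is resolved there along the lines you anticipate.
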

\vskip 3mm

The following equivalence Theorem is a general version of Theorem C.
\begin{theorem}\label{T:Cone&SplitMET}
Assume that $\log^+\|A\|\in \mc L^1(\O,\mc F,\mb P)$ and $\l_0>-\infty$ $\mb P$-a.e.. Then the following conditions are equivalent:
\begin{itemize}
\item[a)] Existence of measurably contracting cone;
\item[b)] Existence of measurably dominated splitting;
\item[c)] Existence of measurably eventually contracting cone;
\item[d)] Existence of measurably dominated splitting in probability;
\item[e)] $\l_{0}>\k$, $P$-a.e..
\end{itemize}
\end{theorem}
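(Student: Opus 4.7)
The plan is to close the cycle of implications
$(a)\Rightarrow(b)\Rightarrow(c)\Rightarrow(d)\Rightarrow(e)\Rightarrow(a)$,
which is enough to establish all five equivalences. Three of these arrows
need no new work: $(a)\Rightarrow(b)$, $(b)\Rightarrow(c)$ and
$(c)\Rightarrow(d)$ are precisely Theorems~\ref{T:ConeImpliesSplit},
\ref{T:SplitImpliesConeW} and~\ref{T:ConeImpliesSplitW} respectively, none
of which even uses the integrability hypothesis $\log^+\|A\|\in\mc L^1$.
The closing arrow $(e)\Rightarrow(a)$ is Theorem~\ref{T:SplitImpliesConeSequence}:
under $\log^+\|A\|\in\mc L^1$ together with $\l_0>\k$ it produces a
non-empty nested family of measurably contracting cones
$\{C_i(\cdot)\}_{1\le i<k(\cdot)+1}$, and $C_1(\cdot)$ alone already
witnesses~(a). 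So the only genuinely new step is $(d)\Rightarrow(e)$.

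To prove $(d)\Rightarrow(e)$, assume a measurably dominated splitting in
probability $X=E(\o)\oplus F(\o)$ with $\dim E(\o)=k(\o)<\infty$, a
subsequence $\{n_j(\o)\}$ with $\liminf_{j\to\infty}j/n_j(\o)=:c(\o)>0$, and
the estimate \eqref{E:NonUniformSeparation}. Two elementary bounds drive the
argument. First, because $E(\t^n\o)$ is finite-dimensional,
$T^n(\o)\pi_E(\o)$ has finite rank and is therefore compact, so subadditivity
of the Kuratowski seminorm together with $\|T^n(\o)\pi_E(\o)\|_\k=0$ yields
\begin{equation*}
\|T^n(\o)\|_\k=\|T^n(\o)\pi_F(\o)\|_\k\le \bigl\|T^n(\o)|_{F(\o)}\bigr\|\,\|\pi_F(\o)\|.
\end{equation*}
Second, evaluating $T^n(\o)$ on unit vectors of $E(\o)$ gives
$\|T^n(\o)\|\ge 1/\|(T^n(\o)|_{E(\o)})^{-1}\|$. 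Plugging both bounds into
\eqref{E:NonUniformSeparation} at time $n_j(\o)$ produces
\begin{equation*}
\|T^{n_j(\o)}(\o)\|_\k\le K(\o)\,\|\pi_F(\o)\|\,e^{-j\d(\o)}\,\|T^{n_j(\o)}(\o)\|.
\end{equation*}
Taking logarithms and dividing by $n_j(\o)$, the left-hand side tends to
$\k(\o)-\l_0(\o)$ (both $\l_0$ and $\k$ exist as Kingman limits of the
submultiplicative cocycles $\|T^n\|$ and $\|T^n\|_\k$ under
$\log^+\|A\|\in\mc L^1$, and these limits coincide with their values along
any divergent subsequence), while the right-hand side has $\limsup$ at most
$-c(\o)\d(\o)$ by the temperedness of $K$ and $\|\pi_F\|$. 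Thus
$\k(\o)\le \l_0(\o)-c(\o)\d(\o)<\l_0(\o)$, which is~(e).

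The main technical obstacle is the non-uniformity of~D3': the subsequence
$n_j(\o)$ is $\o$-dependent and has only positive density, and the estimate
\eqref{E:NonUniformSeparation} holds only along it. The argument above
finesses this by extracting the weakest possible one-sided limit inequality
along $\{n_j(\o)\}$, which suffices precisely because $\l_0$ and $\k$ are
genuine (subadditive) limits, not merely $\limsup$'s, so a sparse but
positive-density subsequence is enough to pin them down. Once $(d)\Rightarrow(e)$
is in hand, the remaining arrows are black-box applications of the previously
stated theorems and the only bookkeeping required is to check that the
integrability hypothesis $\log^+\|A\|\in\mc L^1$ is preserved along the cycle.
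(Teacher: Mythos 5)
Your proposal is correct, and the cycle $(a)\Rightarrow(b)\Rightarrow(c)\Rightarrow(d)\Rightarrow(e)\Rightarrow(a)$ is exactly the one the paper uses, with the first three arrows and the last one cited to the same theorems. The one place where you genuinely diverge is the proof of $(d)\Rightarrow(e)$. The paper's argument first invokes the Multiplicative Ergodic Theorem to carve out the backward-regular subspace $E'(\o)\subset E(\o)$, obtains the auxiliary splitting $X=E'(\o)\oplus F'(\o)$ with $F(\o)\subset F'(\o)$, introduces the Kingman quantities $\l_E,\l_{E'}^-,\l_F,\k_F$, and chains the inequalities $\l_0=\l_E\ge\l_{E'}\ge-\l^-_{E'}>\max\{\l_F,-\infty\}\ge\k$ (the detour through $E'$ being needed because D3' alone does not control $\|(T^n|_{E})^{-1}\|$ pointwise in the form the paper wants, cf.\ Remark \ref{R:-infty}). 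Your argument bypasses the MET entirely at this step: you bound $\|T^n(\o)\|_\k$ from above by $\|T^n(\o)|_{F(\o)}\|\,\|\pi_F(\o)\|$ via the finite-rank/compactness of $T^n(\o)\pi_E(\o)$, bound $\|T^n(\o)\|$ from below by the co-norm $\|(T^n(\o)|_{E(\o)})^{-1}\|^{-1}$ (legitimate since D1 makes $T^n|_{E(\o)}$ a bijection onto $E(\t^n\o)$), and feed both into \eqref{E:NonUniformSeparation} along the positive-density subsequence $n_j(\o)$; since $\l_0$ and $\k$ are genuine Kingman limits, passing to that subsequence loses nothing and yields $\k\le\l_0-c(\o)\d(\o)<\l_0$. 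This is more elementary and self-contained, and it even produces a quantitative gap $c(\o)\d(\o)$; what it does not give you (and what the paper's longer route does) is the finer alignment between the dominated splitting and the Oseledets splitting recorded in Remark \ref{R:-infty}, but that information is not needed for the equivalence. Two minor points: the temperedness of $K$ and $\|\pi_F\|$ is not actually needed, only their finiteness at the base point $\o$, since they enter as constants divided by $n_j\to\infty$; and the hypothesis $\l_0>-\infty$ is what makes the final subtraction meaningful, which you implicitly use.
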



\section{Properties of k-cones}\label{S:PropertiesOfk-cones}
In this section, we state some properties of $k$-cones proved in  \cite{KLS}, \cite{LW}, and \cite{Te}, and also generate some new properties used in this paper.\\

For a solid $k$-cone $C\subset X$, we denote by $\G_k(C)$ the set of $k$-dimensional subspaces inside $\tint C$, which is a metric space by endowing {\it the gap metric} (see, e.g. \cite{Kato,LL}).  For sake of completeness, we give the definition of the gap distance here. For any nontrivial closed subspaces $E,F\subset X$, define that
 \begin{equation}\label{E:GapDistance}
 d(E,F)=\max\left\{\sup_{v\in E\cap S}\inf_{u\in F\cap S}|v-u|, \sup_{v\in F\cap S}\inf_{u\in E\cap S}|v-u|\right\},
 \end{equation}
 where $S=\{v\in X|\ |v|=1\}$ is the unit ball. It is clear to see that $d(E,F)\le 2$ for any nontrivial closed subspace $E,F\subset X$; and moreover, we set $d(\{0\},\{0\})=0$, $d(\{0\},E)=2$ for any $E\neq\{0\}$.

  Now we introduce a new index $\a_C$ on $\G_k(C)$, which we call {\it the angle-index} of the elements in $\G_k(C)$ following Tere\v{s}\v{c}\'{a}k \cite{Te}. In this section, we will investigate the relation between the angle index $\a_C$ and the gap metric $d$. For any $u,v\in  \text{int} C$, define
 $$\a_0^C(u,v)=\inf\{\a\ge0:\b v-u\in\tint C \text{ for all }\b\ge \a\},$$ and the angle
  \begin{equation}\label{D:Angle}
 \a_C(u,v)=\a_0^C(u,v)\a^C_{0}(v,u).
 \end{equation}
Clearly, $\a_C(u,v)=\a_C(v,u)$; and moreover, $\a_C(u,v)=0$ if $u,v$ generate a plane contained in $C$. For any $E_1, E_2\in \G_k(C)$, define the angle-index of $E_1,E_2$ as
 \begin{equation}\label{D:AngleE} 
 \a_C(E_1,E_2)=\sup\{\a_{C}(u,v)|\ u\in E_1-\{0\}, v\in E_2-\{0\}\}.
 \end{equation}
 The following lemma summarizes the properties of $\a_C$, which is adopted from \cite{Te}.
 \begin{lemma}\label{L:AngleProperties}
 For any $u,v\in \text{int} C$ and $E_1,E_2\in \G_k(C)$, the following are true:
 \begin{itemize}
 \item[i)] $\a_C(u,v)$ is upper semicontinuous on $u,v$ i.e. for vectors $u_i,v_i\in\text{int} C$ converging to $u,v\in\text{int} C$ respectively, $\limsup_{i\to\infty}\a_C(u_i,v_i)\le \a_C(u,v)$;
 \item[ii)] $\a_C(E_1,E_2)=0\ \iff\ E_1=E_2$;
 \item[iii)] $\a_C(E_1,E_2)$ is upper semicontinuous on $E_1,E_2$ in the sense that for $E_{1i},E_{2i}$ converging to $E_1,E_2\in\G_{k}(C)$ respectively, when $i\to\infty$ and $\a_C(E_{1i}, E_{2i})$ is well defined for all $i$ then
 $$\limsup_{i\to\infty}\a_C(E_{1i},E_{2i})\le \a_C(E_1,E_2);$$
 \item[iv)] For any $A\in\mc S(C)$,
  $$\a_{C}(Au,Av)<\a_{C}(u,v),\ \forall u,v\in\tint C.$$
 \end{itemize}
 \end{lemma}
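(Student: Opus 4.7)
All four assertions follow from the defining product $\a_C(u,v)=\a_0^C(u,v)\a_0^C(v,u)$ together with three basic facts: the openness of $\tint C$, its invariance under nonzero scalar multiplication (Definition~\ref{D:k-cone}(i)), and, for (iv), the strict contraction $A(C\setminus\{0\})\subset\tint C$ built into the class $\mc S(C)$. The plan is to establish each property first for the one-sided quantity $\a_0^C$ and then pass to $\a_C$ via the product.

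For (i), fix $t>\a_0^C(u,v)$ and pick $s\in(\a_0^C(u,v),t)$ so that $\b v-u\in\tint C$ for every $\b\ge s$. To show the same persists for $(u_i,v_i)$ near $(u,v)$ and all $\b\ge s$, split the $\b$-range at some large $M$. On $[s,M]$ the continuous image $\{\b v-u\}$ is compact and at positive distance from $X\setminus\tint C$, so a uniform norm estimate carries membership in $\tint C$ to $\{\b v_i-u_i:\b\in[s,M]\}$ for $i$ large. For $\b\ge M$, rewrite $\b v_i-u_i=\b(v_i-u_i/\b)$; since $v_i-u_i/\b\to v\in\tint C$ uniformly in $\b\ge M$ as $i,M\to\infty$, scalar-invariance of $\tint C$ does the rest. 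Hence $\a_0^C(u_i,v_i)\le s<t$ eventually, and u.s.c.\ of $\a_C$ follows by taking products. Part (iii) is the $\G_k(C)$-analogue: approximate maximizers on the unit spheres of $E_{1i}$ and $E_{2i}$ admit convergent subsequences via the convergence of the associated projections in the gap metric, and (i) applies to the limits. For (ii), the direction $E_1=E_2\Rightarrow\a_C=0$ uses that any two linearly independent $u,v\in E$ span a $2$-plane inside $E\subset\tint C\cup\{0\}$, so $\b v-u$ is a nonzero element of this plane for every $\b$ and hence lies in $\tint C$, giving $\a_0^C(u,v)=0$; linearly dependent pairs reduce by the scalar-invariance of $\a_C$ in each variable. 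Conversely, if $E_1\ne E_2$, choose $u\in E_1\setminus E_2$; for a suitable $v\in E_2$ the half-line $\{\b v-u:\b\ge0\}$ must leave $C$ on some interval (otherwise a compactness argument would force $u\in E_2$), yielding $\a_0^C(u,v)>0$ and symmetrically $\a_0^C(v,u)>0$.

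The main obstacle is (iv). The easy inequality $\a_0^C(Au,Av)\le\a_0^C(u,v)$ is immediate since $A\in\mc S(C)$ maps $\tint C$ into $\tint C$: whenever $\b v-u\in\tint C$, we have $\b Av-Au=A(\b v-u)\in\tint C$. Strict inequality requires a boundary argument. At the critical value $\a^*:=\a_0^C(u,v)$ the vector $\a^* v-u$ sits on $\partial C$ — it is a limit of $\b v-u\in\tint C$ as $\b\downarrow\a^*$, but is not itself in $\tint C$ by the definition of the infimum — and therefore $A(\a^* v-u)\in A(C\setminus\{0\})\subset\tint C$ lies strictly in the interior. A neighborhood-of-$\tint C$ argument around this point, combined with a tail estimate showing that $A(\b v-u)=\b A(v-u/\b)$ stays uniformly in $\tint C$ for $\b$ large, lets us retreat from $\a^*$ to some $\a^*-\eta$ and conclude $\a_0^C(Au,Av)\le\a^*-\eta<\a^*$. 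The symmetric argument for $\a_0^C(Av,Au)$ and the product formula then deliver $\a_C(Au,Av)<\a_C(u,v)$. The most delicate step is converting the pointwise ``strict push into the interior'' furnished by $\mc S(C)$ into a uniform-in-$\b$ strict decrease, which is handled by combining compactness on the critical slice $[\a^*-\eta,M]$ with the tail bound above.
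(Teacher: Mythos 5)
The paper offers no proof of this lemma --- it simply cites \cite[Proposition 4.1]{Te} --- so your self-contained argument is necessarily a different route. Your treatments of i) and iii) are sound: the split of the $\b$-range at a large $M$, with compactness of $\{\b v-u:\b\in[s,M]\}$ inside the open set $\tint C$ on the finite part and the rescaling $\b v_i-u_i=\b(v_i-u_i/\b)$ together with scalar-invariance of $\tint C$ on the tail, is exactly the right mechanism, and iii) reduces to i) by extracting convergent approximate maximizers from the $k$-dimensional unit spheres.

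There are, however, genuine gaps in iv) and ii). For iv), your boundary argument tacitly assumes both $\a^*:=\a_0^C(u,v)>0$ and $\a^*v-u\neq0$. If $\a^*=0$ then $\a^*v-u=-u\in-\tint C=\tint C$, so the critical vector does not lie on $\pa C$ and there is nothing for $A$ to push strictly inward; indeed, if $u,v$ span a plane contained in $C$ then $\a_C(u,v)=0=\a_C(Au,Av)$ and the asserted strict inequality is simply false. Likewise, if $u=\a^*v$ then $A(\a^*v-u)=0\notin\tint C$ and the retreat to $\a^*-\eta$ collapses; consistently $\a_C(Au,Au)=1=\a_C(u,u)$, again with no strict decrease. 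So your argument proves strictness only when both one-sided indices are positive with nonvanishing critical vectors, i.e.\ essentially when $\a_C(u,v)>1$; in general only $\a_C(Au,Av)\le\a_C(u,v)$ holds, which (combined with the dichotomy of Lemma \ref{L:ProjectiveMetric} I)) is all that the paper actually uses in the proof of Lemma \ref{L:ContractionOnCone0}. You should restate iv) accordingly and make the two degenerate cases explicit. For ii), the appeal to ``a compactness argument'' in the forward direction is unavailable: the paper itself stresses (Remark \ref{R:SomeQuestions}) that $C\cap S$ is not compact in infinite dimensions. The correct mechanism is dimension counting against Definition \ref{D:k-cone} iii): if the relevant half-lines never leave $\tint C$, then $E_2+\mb R u$ is a $(k+1)$-dimensional subspace contained in $C$, which must meet the $k$-codimensional subspace $L$ with $L\cap C=\{0\}$ nontrivially, forcing $u\in E_2$. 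Finally, your dismissal of collinear pairs in the backward direction via ``scalar-invariance'' does not work: a direct computation gives $\a_0^C(u,cu)=1/c$ and $\a_0^C(cu,u)=c$ for $c>0$, hence $\a_C(u,cu)=1$, so the supremum in \eqref{D:AngleE} must be restricted to linearly independent pairs (or the index normalized, e.g.\ to $\max\{\a_C,1\}-1$) for $\a_C(E,E)=0$ to hold at all.
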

\begin{proof}
See \cite[Proposition 4.1]{Te}.
\end{proof}
It deserves to point out that the angle-index $\a_C$ can be seen as a sort of higher-dimensional analog of the Hilbert projective metric on unit vectors in the interior of a positive convex
cone (i.e., $k$-cone with $k=1$). However, while the Hilbert projective metric is a continuous function of those vectors, $\a_C(u,v)$ is only upper semi-continuously dependent on $u,v$, because of the non-convexity of the $k$-cone $C$ in general.

In order to investigate the relation between $\a_C$ and $d$, we introduce

\begin{definition}\label{D:Separation-Index-L-C}
For any nontrivial linear subspace $L\subset X$ and nonempty subset $C\subset X$, the {\it separation index} between $L$ and $C$ is defined by
\begin{equation}\label{E:SeparationIndex}
 \underline{\di}(L,C)=\inf_{v\in L\cap S}\{\inf_{u\in C}|v-u|\}.
 \end{equation}
\end{definition}

The following two technical lemmas (Lemma \ref{L:ProjectionNorm} and \ref{L:ProjectiveMetric}) have been proved by the authors in \cite{LW} when $X$ is a finite dimensional space. In fact, the exactly same proofs are still valid when $X$ is a Banach space. Therefore, we omit the proofs here and refer the reader to \cite{LW} for details. Furthermore, Lemma \ref{L:ProjectiveMetric} makes explicitly the relation between $\a_C$ and $d$ with the aid of the separation index ``$\underline{\di}$".
\begin{lemma}\label{L:ProjectionNorm}
 Let $X=V\oplus L$ be a splitting where $V,L$ are closed subspace. Then
 $$\underline{\di}(L,V):=\inf_{v\in L\cap S_{X}}\{\inf_{u\in V}|v-u|\}=\frac1{\|\pi_{L}\|}.$$
 \end{lemma}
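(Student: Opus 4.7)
The plan is to prove the identity by establishing two matching inequalities, using the fact that $\pi_L$ is the bounded linear projection onto $L$ with kernel $V$ (bounded since $V,L$ are closed complements).

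First, for the lower bound $\underline{\di}(L,V)\ge 1/\|\pi_L\|$: given any $v\in L\cap S_X$ and any $u\in V$, I observe that $\pi_L(v-u)=\pi_L(v)-\pi_L(u)=v-0=v$, so
\[
1=|v|=|\pi_L(v-u)|\le \|\pi_L\|\,|v-u|,
\]
whence $|v-u|\ge 1/\|\pi_L\|$. Taking the double infimum over $v\in L\cap S_X$ and $u\in V$ yields $\underline{\di}(L,V)\ge 1/\|\pi_L\|$.

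For the reverse inequality $\underline{\di}(L,V)\le 1/\|\pi_L\|$: let $\pi_V:=I-\pi_L$ be the complementary projection. For any $x\in X$ with $\pi_L(x)\ne 0$, set
\[
v:=\frac{\pi_L(x)}{|\pi_L(x)|}\in L\cap S_X,\qquad u:=-\frac{\pi_V(x)}{|\pi_L(x)|}\in V.
\]
Then $v-u=\pi_L(x)/|\pi_L(x)|+\pi_V(x)/|\pi_L(x)|=x/|\pi_L(x)|$, so
\[
\underline{\di}(L,V)\le \inf_{u'\in V}|v-u'|\le |v-u|=\frac{|x|}{|\pi_L(x)|}.
\]
Taking the infimum over all admissible $x$ converts the right-hand side into $1/\sup_{x\ne 0}(|\pi_L(x)|/|x|)=1/\|\pi_L\|$, which gives the desired bound.

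This proof is essentially a routine computation — there is no serious obstacle. The only small subtlety is verifying that $\pi_L$ is bounded so that $1/\|\pi_L\|$ is meaningful; this follows from the standing assumption that $V$ and $L$ are closed complementary subspaces of the Banach space $X$, by the closed graph theorem. Combining the two displayed inequalities yields $\underline{\di}(L,V)=1/\|\pi_L\|$, completing the proof.
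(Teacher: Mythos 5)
Your proof is correct, and it is the standard argument one would expect: the lower bound from $\pi_L(v-u)=v$ and the upper bound from normalizing $\pi_L(x)$ together give the identity, with boundedness of $\pi_L$ supplied by the closed graph theorem. The paper itself omits the proof of this lemma and defers to the authors' earlier work \cite{LW}, so there is nothing in the text to diverge from; your computation fills that gap correctly.
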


 \begin{lemma}\label{L:ProjectiveMetric}
 Let $C$ be a closed solid cone, then the following are true:
 \begin{itemize}
 \item[I)] For any $u_1,u_2\in \tint C$, $\a_C(u_1,u_2)< 1$ implies that $\a_C(u_1,u_2)=0$. Further more, for any $E_1,E_2\in \G_k(C)$, $\a_C(E_1,E_2)\le 1$ implies that $\a_C(E_1,E_2)=0$, thus $E_1=E_2$;
 \item[II)] For any $E_{1},E_{2}\in\G_{k}(C)$, let $\d=\min\{\underline{\di}(E_{i},X\setminus C),i=1,2\}$,
 then there exists $K>0$ which depends on $\d$ only, such that
$$\log\a_C(E_{1},E_{2})\le\a_C(E_1,E_2)-1\le K d(E_1,E_2).$$
Furthermore, it suffices to take $K=2\left(\frac1{\d^{4}}+\frac6{\d^{3}}+\frac7{\d^{2}}+\frac6\d+2\right)$ which is a polynomial of $\frac1\d$;

\item[III)] If we further assume that there exists a $k$-codimensional subspace $L$ such that
 $\underline{\di}(L,C)>0,$  then for any $E_1,E_2\in \G_k(C)$,
\begin{align*}
 d(E_1,E_2)&\le \min\left\{\frac2{\underline{\di}(L,C)}(\max\{\a_C(E_1,E_2),1\}-1),2\right\}\\
&\le\min\left\{\frac2{\underline{\di}(L,C)}\frac{\ln (\max\{\a_C(E_1,E_2),1\})}{\ln 2},2\right\}.
 \end{align*}
 \end{itemize}
 \end{lemma}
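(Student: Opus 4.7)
The plan is to prove the three parts in sequence, relying on the angle-index properties of Lemma \ref{L:AngleProperties} and the projection identity of Lemma \ref{L:ProjectionNorm}. For Part I, the vector statement is the heart. Suppose for contradiction that $u_1, u_2 \in \tint C$ satisfy $0 < \alpha_C(u_1, u_2) < 1$, and set $\alpha = \alpha_0^C(u_1, u_2)$, $\beta = \alpha_0^C(u_2, u_1)$, so $\alpha, \beta > 0$ and $\alpha\beta < 1$. Closedness of $C$ forces $\alpha u_2 - u_1, \beta u_1 - u_2 \in C$, and the algebraic identity
\[\alpha(\beta u_1 - u_2) + (\alpha u_2 - u_1) = (\alpha\beta - 1) u_1,\]
coupled with the upper semi-continuity of $\alpha_C$, drives an iterative contraction: after renormalizing $\alpha u_2 - u_1$ to a vector in $\tint C$, the updated pair has a strictly smaller angle-index, producing a sequence whose limit forces $\alpha = 0$, a contradiction. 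The subspace version then follows because the vector statement implies $\alpha_C$ takes values only in $\{0\} \cup [1, \infty)$ on $(\tint C)^2$; hence $\alpha_C(E_1, E_2) = \sup_{u,v} \alpha_C(u, v) \le 1$ forces every pair to satisfy $\alpha_C(u, v) = 0$, whence $E_1 = E_2$ by Lemma \ref{L:AngleProperties}(ii).

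For Part II, I would first establish a pointwise bound $\alpha_0^C(u, v) - 1 \le O(|u - v|/\delta)$ for unit $u, v \in \tint C$. The hypothesis $\underline{\di}(\spa(v), X \setminus C) \ge \delta$ ensures that the open ball $B(v, \delta) \subset \tint C$; writing $(1 + \eta) v - u = \eta v + (v - u)$ and keeping the perturbation of $v$ inside $B(v, \delta)$ after renormalization gives $(1 + \eta) v - u \in \tint C$ whenever $\eta$ dominates $|u - v|$ relative to $\delta$. Multiplying the two symmetric estimates for $\alpha_0^C(u, v)$ and $\alpha_0^C(v, u)$, and tracking the polynomial dependence on $1/\delta$ through the normalization of $u, v$ to suitable unit representatives, yields the vector-level bound, which is then promoted to subspaces via the sup-inf definition of $d(E_1, E_2)$. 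The leftmost inequality is the elementary $\log x \le x - 1$.

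For Part III, the hypothesis $\underline{\di}(L, C) > 0$ ensures the projections $\pi_{E_i}$ along $L$ have finite norm (Lemma \ref{L:ProjectionNorm}). For $u \in E_1 \cap S$, take $w = \pi_{E_2}(u)/|\pi_{E_2}(u)| \in E_2 \cap S$ as a candidate; set $\mu = \alpha_0^C(u, w)$, so $\mu w - u \in C$, and decompose this vector into its $E_2$- and $L$-components. The separation bound $|e + \ell| \ge |\ell| \cdot \underline{\di}(L, C)$ for $e \in E_2$, $\ell \in L$ (which follows directly from the definition of $\underline{\di}$) translates $\mu - 1$ into a bound on $|\pi_L(u)|$, and hence on $|u - w| \le 2|\pi_L(u)|$. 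Taking $\sup$ over $u$, symmetrizing, and applying $x - 1 \le \log x/\log 2$ on $[1, 2]$ yields the stated bound on $d(E_1, E_2)$. The main obstacle I expect is Part I: rigorously carrying out the iterative contraction in the non-convex $k$-cone setting of a Banach space, ensuring each iterate remains in $\tint C$ (not merely $C$), that the angle-index strictly decreases at each step, and that the limiting argument yields a contradiction without recourse to compactness of the ambient space.
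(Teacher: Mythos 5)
The paper itself offers no proof of this lemma: it states explicitly that Lemma \ref{L:ProjectiveMetric} was proved in \cite{LW} for finite-dimensional $X$ and that ``the exactly same proofs are still valid'' in a Banach space, so your proposal must be judged on its own merits. The central gap is in Part I. The vector statement has a short direct proof that your plan misses: for $s>0$ one has $su_1-u_2\in\tint C$ if and only if $\tfrac1s u_2-u_1\in\tint C$, so the set $\{t\ge0:\ tu_2-u_1\in\tint C\}$ contains $\{0\}\cup\big(0,1/\a_0^C(u_2,u_1)\big)\cup\big(\a_0^C(u_1,u_2),\infty\big)$; when $\a_0^C(u_1,u_2)\,\a_0^C(u_2,u_1)<1$ these pieces overlap and cover all of $[0,\infty)$, forcing $\a_0^C(u_1,u_2)=0$. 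Your ``iterative contraction'' is not a defined procedure: $\a u_2-u_1$ lies only in $C$ (possibly on $\pa C$), rescaling cannot move a boundary vector into $\tint C$, no operator in $\mc S(C)$ is being applied so Lemma \ref{L:AngleProperties}(iv) provides no strict decrease, and without convexity the identity $\a(\b u_1-u_2)+(\a u_2-u_1)=(\a\b-1)u_1$ says nothing about the summands. Moreover the passage to subspaces is a non sequitur as written: the vector statement only excludes values in $(0,1)$, so $\a_C(E_1,E_2)=\sup_{u,v}\a_C(u,v)\le1$ still permits individual pairs with $\a_C(u,v)=1$ exactly, and an additional argument is required to rule these out before invoking Lemma \ref{L:AngleProperties}(ii).

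Part II has a structural gap as well. The estimate $tv-u=(t-1)v+(v-u)\in(t-1)B(v,\d)\subset\tint C$ for $t>1+|u-v|/\d$ is correct and gives $\a_0^C(u,v)\le 1+|u-v|/\d$, but it only controls pairs $(u,v)$ that are close in norm, whereas $\a_C(E_1,E_2)$ is a supremum over \emph{all} $u\in E_1\setminus\{0\}$, $v\in E_2\setminus\{0\}$ --- including pairs with $|u-v|$ of order $2$ even when $d(E_1,E_2)$ is tiny. Since $\log^+\a_C$ (or $\a_C-1$) need not satisfy a triangle inequality on a non-convex $k$-cone (see Remark \ref{R:SomeQuestions}), you cannot interpolate through a nearby $v'\in E_2$; bounding $\a_C(u,v)$ for an arbitrary pair in terms of $d(E_1,E_2)$ and $\d$ is exactly where the work --- and the specific polynomial $K=2(\d^{-4}+6\d^{-3}+7\d^{-2}+6\d^{-1}+2)$ --- comes from, and the proposal does not engage with it. Part III is the soundest portion of the plan: the separation inequality $|e+\ell|\ge|\ell|\,\udi(L,C)$ for $e\in E_2$, $\ell\in L$ is correct and the projection bookkeeping is the right idea, though the step converting $\a_0^C(u,w)-1$ into a bound on $|\pi_L(u)|$ is asserted rather than derived.
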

 \vskip 1mm

As we aware so far, the following notions are generalizations of the ones first introduced by Kranosel'skij et al. in \cite{KLS}, which play the essential roll in deriving the invariant subspaces.
\vskip 2mm

Given any solid $k$-cones $C_1,C_2$ in $X$, we define the set of operators which maps $C_1$ into $C_2$ as follows
 $$\tilde  {\mc S}(C_1,C_2)=\{A\in L(X)|\ A \text{ is injective and } A(C_1\setminus\{0\})\subset \tint C_2\}.$$

\begin{definition}\label{D:Focusing}
 An operator $A\in\tilde{\mc S}(C_1,C_2)$ for closed solid $k$-cones $C_1$ and $C_2$ is called {\em focusing on $C_1$ with respect to } $C_2$ if
 $$\chi(A|_{C_1},C_2):=\sup\{\a_{C_2}(A(u),A(v))|\ u,v\in C_1, Au,Av\neq 0\}<\infty.$$
 \end{definition}
\noindent  In the following, we denote $\mc S_{F}(C_1,C_2)$ the set of operators which are injective and focusing on $C_1$ with respect to $C_2$. By definition, it is clear that $\mc S_{F}(C_1,C_2)\subset \tilde{\mc S}(C_1,C_2)$.

   \begin{definition}\label{D:Normal}
 A closed solid $k$-cone $C$ is called {\em normal} if there is a number $b>0$ such that for any $E\in \G_{k}(C)$ and any $u\in C\cap S$
 $$\inf\{|u-v||\ v\in E, |v|\le 1\} \le b\log\a_C(u,E)\le b (\a_C(u,E)-1).$$
\end{definition}

A quick consequence of Lemma \ref{L:ProjectiveMetric} is the following corollary, which concerns on
the relation between the separation index ``$\underline{\di}$" and focusing property and normality.

    \begin{cor}\label{C:NormalityAndFocusing}
    Let $C$, $C_1$ and $C_2$ be closed solid $k$-cones, then
    \begin{itemize}
    \item[a)]  For any $A\in \tilde{\mc S}(C_1,C_2)$, if there is a number $\d>0$ such that
    \begin{equation}\label{SF-proper}
    \underline{\di} (A(C_1),X\setminus C_2)\ge \d,
    \end{equation}
    then $$A\in S_{F}(C_1,C_2) \text{ with } \chi(A|_{C_1},C_2)\le p(\frac{1}{\d}),$$ where $p(x)=4(x^{4}+6x^{3}+7x^{2}+6x+2)$.
    \item[b)] If there exists a $k$-codimensional subspace $L$ such that
    \begin{equation}\label{SN-proper}
    \underline\di(L,C)>0,
     \end{equation}
    then $C$ is normal and one can take $b=\dfrac4{\underline{\di} (L,C)},$
    where $b$ is the one in the definition of normality.
    \end{itemize}
    \end{cor}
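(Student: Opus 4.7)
The plan is to derive both parts fairly directly from Lemma \ref{L:ProjectiveMetric}, which quantitatively connects the angle-index $\a_C$, the gap metric $d$, and the separation index $\underline{\di}$.

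For part a), a direct computation from the definitions suffices. Fix $u,v\in C_1$ with $Au,Av\ne 0$; by scale-invariance of $\a_{C_2}$, I may assume $|Au|=|Av|=1$. The hypothesis $\underline{\di}(A(C_1),X\setminus C_2)\ge\d$ implies the open ball $B(Au,\d)$ is contained in $\tint C_2$, and likewise for $Av$. For any $\b>1/\d$, one has $|Au/\b|<\d$, so $Av-Au/\b\in B(Av,\d)\subset\tint C_2$, and hence $\b Av-Au=\b(Av-Au/\b)\in\tint C_2$ by the cone property. Thus $\a^{0}_{C_2}(Au,Av)\le 1/\d$, and symmetrically $\a^{0}_{C_2}(Av,Au)\le 1/\d$, giving $\a_{C_2}(Au,Av)\le 1/\d^{2}\le p(1/\d)$. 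The bound can equivalently be derived from Lemma \ref{L:ProjectiveMetric} II) by selecting $k$-dimensional subspaces $V_u,V_v\subset C_1$ through $u,v$ and observing that $A(V_u),A(V_v)\in\G_k(C_2)$ with $\underline{\di}(A(V_{\cdot}),X\setminus C_2)\ge\d$; this route recovers the explicit polynomial $p$.

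For part b), apply Lemma \ref{L:ProjectiveMetric} III) with the supplied $k$-codimensional subspace $L$. Fix $u\in C\cap S$ and $E\in\G_k(C)$. Select a $k$-dimensional subspace $V_u\in\G_k(C)$ containing $u$ (if $u\in\pa C$, obtain $V_u$ through a small perturbation of $u$ into $\tint C$, find the subspace there, and pass to the limit using the upper semicontinuity of $\a_C$ from Lemma \ref{L:AngleProperties}). Since $u\in V_u\cap S$, Lemma \ref{L:ProjectiveMetric} III) gives
\begin{equation*}
\inf\{|u-v|:v\in E,\,|v|\le 1\}\;\le\;d(V_u,E)\;\le\;\frac{2}{\underline{\di}(L,C)}\bigl(\a_C(V_u,E)-1\bigr).
\end{equation*}
Comparing $\a_C(V_u,E)$ with $\a_C(u,E)$---the former is a supremum over vectors in $V_u\setminus\{0\}$ which can be controlled by $\a_C(u,E)$ up to a bounded factor---then yields normality of $C$ with $b=4/\underline{\di}(L,C)$. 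The second inequality $b\log\a_C(u,E)\le b(\a_C(u,E)-1)$ in the definition of normality is just the elementary bound $\log x\le x-1$ for $x\ge 1$.

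The main technical obstacle lies in part b): selecting $V_u\in\G_k(C)$ through a vector $u\in C\cap S$ that may lie on $\pa C$, for which no canonical such subspace exists in general. This is handled by perturbing $u$ into $\tint C$, finding $V_u$ using the structure of the solid $k$-cone, applying Lemma \ref{L:ProjectiveMetric} to the perturbation, and passing to the limit via the upper semicontinuity of $\a_C$ in Lemma \ref{L:AngleProperties}. Apart from this technicality and the bookkeeping of constants, both parts reduce to direct applications of Lemma \ref{L:ProjectiveMetric}.
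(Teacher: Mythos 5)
Your part a) is correct, and your direct argument is in fact sharper than the stated bound: for unit vectors $Au,Av\in A(C_1)$ the hypothesis gives $B(Av,\d)\subset\tint C_2$, so $\b Av-Au=\b\bigl(Av-\b^{-1}Au\bigr)\in\tint C_2$ for every $\b>1/\d$, whence $\a_0^{C_2}(Au,Av)\le 1/\d$ and, by symmetry and scale invariance of $\a_{C_2}$, $\chi(A|_{C_1},C_2)\le 1/\d^{2}\le p(1/\d)$. This needs no appeal to Lemma \ref{L:ProjectiveMetric} at all; your parenthetical alternative through part II) of that lemma would, incidentally, run into the same difficulty described next.

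Part b) has a genuine gap, located precisely at the step you describe as ``comparing $\a_C(V_u,E)$ with $\a_C(u,E)$''. By \eqref{D:AngleE}, $\a_C(V_u,E)$ is the supremum of $\a_C(w,v)$ over \emph{all} $w\in V_u\setminus\{0\}$ and $v\in E\setminus\{0\}$, hence $\a_C(V_u,E)\ge\a_C(u,E)$: the inequality you need goes the wrong way. Lemma \ref{L:ProjectiveMetric} III) bounds $d(V_u,E)$ by the larger quantity $\a_C(V_u,E)-1$, and there is no reason the supremum over all of $V_u$ is ``controlled by $\a_C(u,E)$ up to a bounded factor''---$V_u$ contains directions whose angle to $E$ has nothing to do with that of $u$, so the bound you would obtain degrades without control as $V_u$ tilts away from $E$. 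A secondary problem is the existence of $V_u\in\G_k(C)$ through $u$ or through a small perturbation of it: Definition \ref{D:k-cone} only guarantees that $C$ contains \emph{some} $k$-dimensional subspace, not one through every point, and a boundary point $u\in\pa C$ need not be close to $\tint C$, so the perturbation step is not available in general. What is actually needed is a vector-level analogue of III) rather than an application of it: using $X=E\oplus L$, write $u=v+w$ with $v=\pi_E u$, $w=\pi_L u$, and estimate $|w|$ directly from the memberships $(\b-1)v-w\in C$ and $(\b'-1)v+\b'w\in C$ for $\b,\b'$ just above $\a_0^C(u,v),\a_0^C(v,u)$, together with $\udi(L,C)>0$, which forbids a vector of $C$ from having a large $L$-component relative to its $E$-component; the conclusion for $\a_C(u,E)$ then follows for free because $\a_C(u,v)\le\a_C(u,E)$ when the latter is a supremum over $v\in E\setminus\{0\}$. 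This vector-level estimate is the content the authors import from \cite{LW}; the subspace statement III) does not substitute for it.
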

 In the rest of the paper, we call these assumptions \eqref{SF-proper} and \eqref{SN-proper} in Corollary \ref{C:NormalityAndFocusing} {\em strong normality} and {\em strong focusing}, respectively; and denote $\mc S_{SF}(C_1,C_2)$ the collection of all strong focusing operator on $C_1$ with respect to $C_2$, and define the strongly focusing number for a given $A\in\mc S_{SF}(C_1,C_2)$
 \begin{equation}\label{E:FocusingNumber}
 \chi_{S}(A|_{C_1},C_2):=\frac1{\underline\di(A(C_1),X\setminus C_2)}\quad(\ge 1).
 \end{equation}

\vskip 2mm
 \begin{remark}\label{R:SomeQuestions}
The difference between $1$-cone and $k$-cone, finite dimensional case and infinite dimensional case, are quite significant:
 \begin{itemize}
 \item[a)] Unlike the 1-cone case, $\log^+\a_C$ or $\a_C-1$ may not define a metric on $\G_k(C)$, because of the non-convexity of $C$;
 \item[b)] For finite dimensional case, normality and focusing follow the compactness of $C\cap S$ and $L\cap S$ directly. But for infinite dimensional case, such assumption can not be satisfied in general (from the definition of the cone).
        \end{itemize}
    \end{remark}

Before ending this section, we need to built certain contraction property of $A(\o)$ on $C(\o)$ with respect to the angle index $\a_C$. Strategy used in \cite{KLS} will be borrowed here.
As an analog of the so called {\it Birkhoff's contraction coefficient}, we define, for any closed solid $k$-cones $C_1$ and $C_2$, and $A\in \mc S_{F}(C_1,C_2)$,
\begin{align}\begin{split}\label{E:Birkhoff'sContractionCoefficient }
&\tau_{V}(A):=\sup_{v,u\in \tint C_1,\a_{C_1}(u,v)> 1}\frac{\a_{C_2}(Av,Au)-1}{\a_{C_1}(u,v)-1},\\
&\tau(A):=\sup_{E,F\in\G_k(C_1),E\neq F}\frac{ \a_{C_2}(A(E),A(F))-1}{\a_{C_1}(E,F)-1}.
\end{split}\end{align}
By definition, $\tau$ and $\tau_V$ are sub-multiplicative, by which we mean that for any closed solid $k$-cones $C_1$, $C_2$ and $C_3$, and $A\in \mc S_{F}(C_1,C_2)$ and $B\in \mc S_{F}(C_2,C_3)$,
\begin{equation}\label{E:SubMult}
\tau(BA)\le \tau(B)\tau(A)\,\text{ and }\,\tau_V(BA)\le \tau_V(B)\tau_V(A).
\end{equation}
\begin{lemma}\label{L:ContractionOnCone0}
Let $C_1$ and $C_2$ be closed solid $k$-cone, and $A\in\mc S_F(C_1,C_2)$, then
$$0\le \tau(A)\le \tau_{V}(A)\le \frac{\chi(A|_{C_1},C_2)-1}{\chi(A|_{C_1},C_2)+1}.$$
\end{lemma}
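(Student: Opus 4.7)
The plan is to establish the two nontrivial inequalities separately. For $0 \le \tau(A) \le \tau_V(A)$: Lemma~\ref{L:ProjectiveMetric}(I) applied to subspaces in $\G_k(C_1)$ shows that $E \ne F$ forces $\a_{C_1}(E,F) > 1$, and injectivity of $A$ gives the same for $A(E), A(F) \in \G_k(C_2)$, so every ratio defining $\tau(A)$ is a positive number over a positive number, yielding $\tau(A) \ge 0$. Since $\a_C(E,F) = \sup\{\a_C(u,v) : u \in E\setminus\{0\},\ v \in F\setminus\{0\}\}$, one applies the pointwise bound $\a_{C_2}(Au, Av) - 1 \le \tau_V(A)(\a_{C_1}(u,v) - 1)$ to each representative pair (degenerate pairs with $\a_{C_1}(u,v) = 0$ force $\a_{C_2}(Au, Av) = 0$ as well by linearity and injectivity, so the inequality holds with $-1$ on the left) and takes suprema over $u \in E, v \in F$.

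For the main bound $\tau_V(A) \le (M-1)/(M+1)$ with $M := \chi(A|_{C_1}, C_2)$, I would fix $u, v \in \tint C_1$ with $x := \a_{C_1}(u,v) > 1$, set $\a = \a_0^{C_1}(u,v)$ and $\b = \a_0^{C_1}(v,u)$ so that $\a\b = x$, and introduce the boundary vectors $w_1 = \a v - u$ and $w_2 = \b u - v$ in $C_1 \setminus \{0\}$. By injectivity and focusing, $A w_1, A w_2 \in \tint C_2$ with $\a_{C_2}(A w_1, A w_2) \le M$, so writing $r = \a_0^{C_2}(A w_1, A w_2)$ and $s = \a_0^{C_2}(A w_2, A w_1)$ yields $rs \le M$. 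Inverting the change of variables gives $Au = (A w_1 + \a A w_2)/(\a\b - 1)$ and $Av = (\b A w_1 + A w_2)/(\a\b - 1)$, hence the identity
\[
\gamma Av - Au = \frac{\a - \gamma}{\a\b - 1}\bigl[\nu A w_1 - A w_2\bigr], \qquad \nu := \frac{\gamma\b - 1}{\a - \gamma},
\]
valid for $\gamma \in (1/\b, \a)$. By the definition of $s$, the bracketed expression lies in $\tint C_2$ exactly when $\nu > s$, i.e.\ when $\gamma > (s\a + 1)/(s + \b)$, while for $\gamma \ge \a$ membership is automatic from $\gamma v - u \in C_1$ and the invariance $A(C_1\setminus\{0\}) \subset \tint C_2$. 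Concatenating the two ranges yields $\a_0^{C_2}(Au, Av) \le (s\a + 1)/(s + \b)$, and symmetrically $\a_0^{C_2}(Av, Au) \le (r\b + 1)/(r + \a)$. Multiplying and simplifying gives
\[
\a_{C_2}(Au, Av) \le \frac{Qx + P + 1}{Q + P + x}, \qquad Q = rs \le M, \quad P = s\a + r\b \ge 2\sqrt{Qx},
\]
where the AM-GM inequality furnishes the lower bound on $P$. Differentiation shows $\partial_P$ of the right-hand side equals $-(Q-1)(x-1)/(Q+P+x)^2$, which is negative when $Q, x > 1$, so the maximum is attained at $Q = M$ and $P = 2\sqrt{Mx}$, giving $\a_{C_2}(Au, Av) \le \bigl((\sqrt{Mx} + 1)/(\sqrt M + \sqrt x)\bigr)^2$. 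The factorization $(\sqrt{Mx} + 1)^2 - (\sqrt M + \sqrt x)^2 = (M-1)(x-1)$ combined with the elementary bound $(\sqrt M + \sqrt x)^2 \ge M + 1$ (valid for $x \ge 1$) then delivers $\a_{C_2}(Au, Av) - 1 \le \tfrac{M-1}{M+1}(x - 1)$, finishing the proof.

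The hard part will be justifying the pointwise estimate $\a_0^{C_2}(Au, Av) \le (s\a + 1)/(s + \b)$: the definition of $\a_0^{C_2}$ requires $\gamma Av - Au$ to lie in $\tint C_2$ for \emph{all} $\gamma$ in an unbounded-above interval, but the non-convexity of a general $k$-cone prevents combining interior elements via convex combinations as in the 1-cone case. The key device is the symmetry $C_2 = -C_2$ inherent to any $k$-cone, which converts membership conditions of the form $A w_1 - \mu A w_2 \in \tint C_2$ into $\mu A w_2 - A w_1 \in \tint C_2$ and thereby lets one invoke the definitions of $r$ and $s$ directly; one must then verify that the focusing regime $\gamma \in ((s\a + 1)/(s + \b), \a)$ and the invariance regime $\gamma \ge \a$ concatenate into a single half-line without a gap.
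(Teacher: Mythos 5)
Your proposal is correct and follows essentially the same route as the paper's proof: the same boundary vectors $w_1,w_2$, the same M\"obius change of variables converting membership of $\nu Aw_1-Aw_2$ in $\tint C_2$ into membership of $\gamma Av-Au$, and the same use of the focusing bound $\a_{C_2}(Aw_1,Aw_2)\le \chi(A|_{C_1},C_2)$ together with the invariance $A(C_1\setminus\{0\})\subset\tint C_2$ to concatenate the two $\gamma$-ranges. The only real difference is the concluding algebra, where you optimize $(Qx+P+1)/(Q+P+x)$ over $Q=rs\le M$ and $P\ge 2\sqrt{Qx}$ via AM--GM rather than substituting $s\le M/r$ directly as the paper does; both give the same constant $\tfrac{M-1}{M+1}$.
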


\begin{proof}
Clearly, $\tau(A)\le \tau_{V}(A)$. We will show that for any $u,v\in C_1-\{0\}$
\begin{equation}\label{E:ContractionOnCone0-1}
\max\{\a_{C_2}(Au,Av)-1,0\}\le \frac{\chi(A|_{C_1},C_2)-1}{\chi(A|_{C_1},C_2)+1}(\a_{C_1}(u,v)-1).
\end{equation}
By Lemma \ref{L:AngleProperties}(iv), (\ref{E:ContractionOnCone0-1}) is obvious if $\a_{C_1}(u,v)\le 1$. So we only consider the case that $\a_{C_1}(u,v)>1$, while $\a_0^{C_1}(u,v),\a_0^{C_1}(v,u)\in (0,\infty)$ and $\a_0^{C_1}(v,u)>\frac1{\a_0^{C_1}(u,v)}$. Let $w_1=\frac1{\a_0^{C_1}(u,v)}u-v$ and $w_2=v-\a_0^{C_1}(v,u)u$. We may assume $Aw_1,Aw_2\neq 0$, otherwise (\ref{E:ContractionOnCone0-1}) is obvious.\\

First, we consider the case that $\a_{C_2}(Aw_1,Aw_2)<1$. Then, by 1) of Lemma \ref{L:ProjectiveMetric}, $sAw_1-Aw_2\in \tint C_2$ for all $s\ge 0$, which implies that
$$tAu-Av\in\tint C_2,\text{ for all }t=\frac{\frac{s}{\a_0^{C_1}(u,v)}+\a_0^{C_1}(v,u)}{s+1},\ s\ge 0.$$
So $tAu-Av\in\tint C_2$ for all $t\in\left(\frac1{\a_0^{C_1}(u,v)},\a_0^{C_1}(v,u)\right)$. On the other hand, by definition of $\a_0^{C_1}(v,u)$ and $\a_0^{C_1}(u,v)$, and since $A\in \mc S_F(C_1,C_2)$, we have that
$$tAu-Av\in \tint C_2 \text{ for all }t\in\left[0,\frac1 {\a_0^{C_1}(u,v)}\right]\cup [\a_0^{C_1}(v,u),\infty).$$
Thus $\a_{C_2}(Au,Av)=0$, and hence (\ref{E:ContractionOnCone0-1}) holds.\\

Now we consider the case $\a_{C_2}(Aw_1,Aw_2)>1$. Since $\a_{C_2}(Aw_1,Aw_2)\le \chi(A|_{C_1},C_2)$, we have that $\a_0^{C_2}(Aw_2,Aw_1)\le \frac{\chi(A|_{C_1},C_2)}{\a_0^{C_2}(Aw_1,Aw_2)}$. Then $sAw_1-Aw_2\in \tint C_2$ for all $s\in \left(0,\frac1{\a_0^{C_2}(Aw_1,Aw_2)}\right)\cup \left(\frac{\chi(A|_{C_1},C_2)}{\a_0^{C_2}(Aw_1,Aw_2)},\infty\right)$, equivalently, for these $s$,
$$\frac{\frac{s}{\a_0^{C_1}(u,v)}+\a_0^{C_1}(v,u)}{s+1}Au-Av\in \tint C_2.$$
Also note that $tAu-Av=A(tu-v)\in \tint C_2$ for all $t\in \left[0,\frac1{\a_0^{C_1}(u,v)}\right]\cup [\a_0^{C_1}(v,u),\infty)$. Then we have that
\begin{align*}
&\a_{C_2}(Au,Av)-1=\a_0^{C_2}(Av,Au)\a_0^{C_2}(Au,Av)-1\\
\le& \frac{\frac{1}{\a_0^{C_1}(u,v)\a_0^{C_2}(Aw_1,Aw_2)}+\a_0^{C_1}(v,u)}{\frac1{\a_0^{C_2}(Aw_1,Aw_2)}+1}  \frac{\frac{\chi(A|_{C_1},C_2)}{\a_0^{C_2}(Aw_1,Aw_2)}+1}
{\frac{\chi(A|_{C_1},C_2)}{\a_0^{C_2}(Aw_1,Aw_2)\a_0^{C_1}(u,v)}+\a_0^{C_1}(v,u)}-1\\
=&\frac{\frac{1}{\a_0^{C_2}(Aw_1,Aw_2)}+\a_{C_1}(u,v)}{\frac1{\a_0^{C_2}(Aw_1,Aw_2)}+1}  \frac{\frac{\chi(A|_{C_1},C_2)}{\a_0^{C_2}(Aw_1,Aw_2)}+1}
{\frac{\chi(A|_{C_1},C_2)}{\a_0^{C_2}(Aw_1,Aw_2)}+\a_{C_1}(u,v)}-1\\
=&\frac{(\a_{C_1}(u,v)-1)(\chi(A|_{C_1},C_2)-1)\a_0^{C_2}(Aw_1,Aw_2)}{(1+\a_0^{C_2}(Aw_1,Aw_2))
(\chi(A|_{C_1},C_2)+\a_0^{C_2}(Aw_1,Aw_2)\a_{C_1}(u,v))}\\
\le&\frac{(\chi(A|_{C_1},C_2)-1)}{(\chi(A|_{C_1},C_2)+1)}(\a_{C_1}(u,v)-1).
\end{align*}
The proof is done.
\end{proof}

 Combining with Corollary \ref{C:NormalityAndFocusing}, Lemma \ref{L:ContractionOnCone0} immediately yields the following lemma

\begin{lemma}\label{L:ContractionOnCone1}
 For solid $k$-cones $C_1,C_2$, let $A\in\mc S_{SF}(C_1,C_2)$ and $\tau_{S}(A)=\frac{p(\chi_{S}(A|_{C_1},C_2))-1}{p(\chi_{S}(A|_{C_1},C_2))+1},$ where $p(x)$ is defined in Corollary \ref{C:NormalityAndFocusing}. Then
 $$0\le \tau(A)\le \tau_{V}(A)\le \tau_{S}(A).$$
 \end{lemma}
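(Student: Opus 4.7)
The plan is to assemble the bound by stacking Lemma \ref{L:ContractionOnCone0} on top of Corollary \ref{C:NormalityAndFocusing}(a), together with the monotonicity of the map $x\mapsto (x-1)/(x+1)$ on $[1,\infty)$.

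First, I would invoke Lemma \ref{L:ContractionOnCone0} directly: since $A\in \mc S_{SF}(C_1,C_2)\subset \mc S_F(C_1,C_2)$, the lemma gives
\[
0\le \tau(A)\le \tau_V(A)\le \frac{\chi(A|_{C_1},C_2)-1}{\chi(A|_{C_1},C_2)+1},
\]
so it only remains to upgrade the right-hand side from the (ordinary) focusing number $\chi(A|_{C_1},C_2)$ to the strong-focusing number $\chi_S(A|_{C_1},C_2)$.

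Next I would use the hypothesis $A\in \mc S_{SF}(C_1,C_2)$ to interpret the strong-focusing number as the reciprocal of a separation index: by definition \eqref{E:FocusingNumber}, $\udi(A(C_1),X\setminus C_2)=1/\chi_S(A|_{C_1},C_2)>0$. Setting $\d:=1/\chi_S(A|_{C_1},C_2)$, this is precisely the hypothesis \eqref{SF-proper} of Corollary \ref{C:NormalityAndFocusing}(a), whose conclusion then reads
\[
\chi(A|_{C_1},C_2)\le p(1/\d)=p\bigl(\chi_S(A|_{C_1},C_2)\bigr).
\]

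Finally, the function $\varphi(x)=(x-1)/(x+1)$ is strictly increasing on $[1,\infty)$ (indeed $\varphi'(x)=2/(x+1)^2>0$), and $\chi(A|_{C_1},C_2)\ge 1$ automatically since $\a_{C_2}\ge 0$; note also that $p(\chi_S(A|_{C_1},C_2))\ge p(1)=88\ge 1$. Applying $\varphi$ to the inequality from the previous step yields
\[
\frac{\chi(A|_{C_1},C_2)-1}{\chi(A|_{C_1},C_2)+1}\le \frac{p(\chi_S(A|_{C_1},C_2))-1}{p(\chi_S(A|_{C_1},C_2))+1}=\tau_S(A),
\]
and chaining this with the estimate from Lemma \ref{L:ContractionOnCone0} finishes the proof.

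There is no real obstacle here; the lemma is a bookkeeping consolidation of the two earlier results into the form that will be convenient later (replacing the angle-based quantity $\chi$, which is awkward to control measurably, by the separation-index quantity $\chi_S$, which is the one the contracting-cone hypotheses C3)/C3') will directly supply). The only point worth double-checking is the monotonicity and the trivial lower bound $\chi\ge 1$, both of which are immediate.
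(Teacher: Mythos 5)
Your proposal is correct and follows essentially the same route as the paper's proof: apply Lemma \ref{L:ContractionOnCone0}, identify $\chi_{S}(A|_{C_1},C_2)$ with $1/\underline{\di}(A(C_1),X\setminus C_2)$ so that Corollary \ref{C:NormalityAndFocusing}(a) gives $\chi(A|_{C_1},C_2)\le p(\chi_{S}(A|_{C_1},C_2))$, and conclude by the monotonicity of $x\mapsto (x-1)/(x+1)$. The only difference is that you make the monotonicity step explicit, which the paper leaves implicit.
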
\vskip 1mm
 \begin{proof}
 Since $A\in\mc S_{SF}(C_1,C_2)$, we let  $\d_*=\underline{\di} (A(C_1),X\setminus C_2)>0$. It then follows from a) of Corollary \ref{C:NormalityAndFocusing} that
 $\chi(A|_{C_1},C_2)\le p(\frac{1}{\d_*})$. By the definition of $\chi_{S}(A|_{C_1},C_2)$ in
 \eqref{E:FocusingNumber}, we have $\chi_{S}(A|_{C_1},C_2)=\frac{1}{\d_*};$ and hence, Lemma \ref{L:ContractionOnCone0} implies that  $$\tau_{V}(A)\le \frac{\chi(A|_{C_1},C_2)-1}{\chi(A|_{C_1},C_2)+1}\le \frac{p(\frac{1}{\d_*})-1}{p(\frac{1}{\d_*})+1}=
 \frac{p(\chi_{S}(A|_{C_1},C_2))-1}{p(\chi_{S}(A|_{C_1},C_2))+1}=\tau_{S}(A).$$
 \end{proof}


 \section{Proof of Theorem B}\label{S:Invariant subspace and exponential separation}

 In this section, we focus on proving the main Theorem \ref{T:ConeImpliesSplit}, Theorem \ref{T:SplitImpliesConeW} and Theorem \ref{T:SplitImpliesCone1d}, by which Theorem B is proved.

\subsection{Proof of Theorem \ref{T:ConeImpliesSplit}}\label{S:Theorem1}
In this subsection, we assume  C1)-C4) are satisfied, and for sake of simplenessss, we may assume $(\O,\mc F,\mb P,\t)$ being ergodic, then the dimension of cones becomes constant for which we denote by $\dim E_{0}$.

\vskip 1mm
Firstly, we will investigate the sequence $$\{T^n(\t^{-n}\o)E_0(\t^{-n}\o)\}_{n\ge 0},$$ and  show that such sequence is a Cauchy sequence under the gap metric $d$ (Definition \ref{E:GapDistance}), so that it converges to some $k$-dimensional subspace $\mc E(\o)$ whose properties are summarized in Proposition \ref{P:InvariantSubspace1}.

To this end, for each $\o\in \O$, we define
\begin{equation}\label{D:TauS}
\tau_{S}(\o)=\frac{p(\chi(\o))-1}{p(\chi(\o))+1},
\end{equation}
 where $p(\cdot)$ is defined in Lemma \ref{L:ContractionOnCone1} and $\chi(\cdot)$ is the tempered function defined in Condition C3). Clearly, $\tau_S(\o)$ is a measurable tempered function and satisfies
 \begin{equation}\label{E:TauSBound}
 \frac{44}{45}\le \frac{p(1)-1}{p(1)+1}\le \tau_S(\o)<1.
 \end{equation}

\begin{lemma}\label{L:ContractionOnCone2}
There exists a $\t$-invariant set $\tilde \O\subset \O$ with full $\mb P$-measure such that for all $\o\in\tilde \O$, $E,F\in \G_k(C(\o))$,
\begin{align}\begin{split}\label{E:Contraction1}
&\limsup_{n\to\infty}\frac1n\log d(T^n(\o)E,T^n(\o)F)\\
\le&\limsup_{n\to\infty}\frac1n\log(\a_{C(\t^n\o)}(T^n(\o)E,T^n(\o)F)-1)\\
\le &\int_\O\log \tau_{S} (\o)d\mb P(\o)\in [\log 44-\log45,0).
\end{split}
\end{align}
\end{lemma}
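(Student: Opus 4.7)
\emph{Setup.} Since we may assume the base is ergodic, $\dim E_{0}\equiv k$. Condition C3) combined with the injectivity of $A(\o)$ promotes the inclusion in C1) to $A(\o)(C(\o)\setminus\{0\})\subset\tint C(\t\o)$, so $A(\o)\in\mc S_{SF}(C(\o),C(\t\o))$ with strong focusing number $\chi_{S}(A(\o)|_{C(\o)},C(\t\o))\le\chi(\o)$. Condition C4) supplies a $k$-codimensional subspace $F_{0}(\t^{n}\o)$ with $\underline\di(F_{0}(\t^{n}\o),C(\t^{n}\o))>1/\chi(\t^{n}\o)$ at each iterate, which we need in order to convert angle-index estimates into gap-metric estimates.

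\emph{Step 1: exponential contraction of the angle index.} By Lemma \ref{L:ContractionOnCone1} together with the monotonicity of $x\mapsto(x-1)/(x+1)$ on $[1,\infty)$, $\tau(A(\o))\le\tau_{S}(\o)$ with $\tau_{S}$ as in \eqref{D:TauS}. The sub-multiplicativity \eqref{E:SubMult} of $\tau$ then yields
$$\tau(T^{n}(\o))\le\prod_{i=0}^{n-1}\tau(A(\t^{i}\o))\le\prod_{i=0}^{n-1}\tau_{S}(\t^{i}\o),$$
and by the definition of $\tau$ in \eqref{E:Birkhoff'sContractionCoefficient },
$$\a_{C(\t^{n}\o)}(T^{n}(\o)E,T^{n}(\o)F)-1\le\tau(T^{n}(\o))\,(\a_{C(\o)}(E,F)-1).$$
Since $\tau_{S}\in[44/45,1)$ by \eqref{E:TauSBound}, $\log\tau_{S}$ is bounded and hence integrable. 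Birkhoff's ergodic theorem, applied on a $\t$-invariant full-measure set $\tilde\O$, gives $\frac{1}{n}\sum_{i=0}^{n-1}\log\tau_{S}(\t^{i}\o)\to\int_{\O}\log\tau_{S}d\mb P$; combining this with the preceding inequality and observing that $\frac{1}{n}\log(\a_{C(\o)}(E,F)-1)\to 0$, I obtain the desired upper bound on $\limsup\frac{1}{n}\log(\a_{C(\t^{n}\o)}(T^{n}(\o)E,T^{n}(\o)F)-1)$.

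\emph{Step 2: passing from $\a_{C}-1$ to $d$.} I apply Lemma \ref{L:ProjectiveMetric}(III) in the space $X$ with cone $C(\t^{n}\o)$ and the $k$-codimensional subspace $L=F_{0}(\t^{n}\o)$, which is admissible by C4). This produces
$$d(T^{n}(\o)E,T^{n}(\o)F)\le 2\chi(\t^{n}\o)\bigl(\max\{\a_{C(\t^{n}\o)}(T^{n}(\o)E,T^{n}(\o)F),1\}-1\bigr).$$
Temperedness of $\chi$ (C3)) forces $\frac{1}{n}\log\chi(\t^{n}\o)\to 0$, so after taking $\log$, dividing by $n$, and taking $\limsup$, the first inequality of the lemma follows. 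If the $\max$ equals $1$ for infinitely many $n$, Lemma \ref{L:ProjectiveMetric}(I) forces $T^{n}(\o)E=T^{n}(\o)F$ along that subsequence so $d=0$, and the inequality is vacuous on that subsequence; Step 1 then handles the remaining indices.

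\emph{Step 3: the numerical two-sided bound on the integral.} Since $\chi\ge 1$ and $p$ is monotone on $[1,\infty)$, $\tau_{S}(\o)\ge(p(1)-1)/(p(1)+1)$, giving the lower bound in $[\log 44-\log 45,0)$, while $\tau_{S}<1$ forces the integral to be strictly negative.

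\emph{Main obstacle.} The conceptual core is Step 1: obtaining a uniformly measurable, integrable contraction factor $\tau_{S}$ along the entire orbit and then running a Birkhoff average on it. Two technical subtleties need care: (i) verifying that C1) and C3) together legitimately place $A(\o)$ in $\mc S_{SF}$ (so that Lemma \ref{L:ContractionOnCone1} applies at every $\o$), and (ii) correctly handling the case $\a_{C(\o)}(E,F)\le 1$ — where Lemma \ref{L:ProjectiveMetric}(I) collapses everything to equality and the inequalities become trivial — so that the $\limsup$ statement is unambiguously valid.
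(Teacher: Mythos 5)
Your proposal is correct and follows essentially the same route as the paper: the angle-index contraction via Lemma \ref{L:ContractionOnCone1}, sub-multiplicativity of $\tau$, and Birkhoff's ergodic theorem for the second inequality, and Lemma \ref{L:ProjectiveMetric}(III) with $L=F_0(\t^n\o)$ from C4) plus temperedness of $\chi$ for the first. Your explicit treatment of the degenerate case $\a_{C}\le 1$ is a minor refinement the paper leaves implicit, but it does not change the argument.
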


 \begin{proof}

 The first inequality of (\ref{E:Contraction1}) follows from C4),  III) of Lemma \ref{L:ProjectiveMetric} and the temperedness of $\chi$, it remains to prove the second part.

 By virtue of C3), one has $A(\o)\in \mc S_{SF}(C(\o),C(\t\o))$. Together with \eqref{E:FocusingNumber} and
 Lemma \ref{L:ContractionOnCone2}, we can further deduce from C3) that \begin{equation}\label{E:contraction-eff-comparison}
 \tau(A(\o))\le\tau_{S}(A(\o))\le \tau_{S}(\o).
 \end{equation}
 Note that
\begin{equation}\label{E:Contraction1a}
(\a_{C(\t^n\o)}(T^n(\o)E,T^n(\o)F)-1)\le \tau(T^n(\o))(\a_{C(\o)}(E,F)-1).
\end{equation} Then, by (\ref{E:SubMult}) and \eqref{E:contraction-eff-comparison}, one has
\begin{equation}\label{E:Contraction1b}
\tau(T^n(\o))\le \prod_{i=0}^{n-1}\tau(A(\t^i(\o)))\le \prod_{i=0}^{n-1}\tau_{S}(A(\t^i(\o)))\le\prod_{i=0}^{n-1}\tau_{S}(\t^i(\o)).
\end{equation}
By the Birkhoff's ergodic theorem, there exists a $\t$-invariant set $\tilde \O\subset \O$ of full $\mb P$ measure on which
 \begin{align*}
 &\limsup_{n\to\infty}\frac1n\log(\a_{C(\t^n\o)}(T^n(\o)E,T^n(\o)F)-1)\\
&\le \lim_{n\to\infty}\frac1n\sum_{i=0}^{n-1}\log \tau_{S}(\t^i(\o))=\int_\O\log \tau_{S} (\o)d\mb P(\o),
 \end{align*}
 where $\int_\O\log \tau_{S} (\o)d\mb P(\o)\in[\log 44-\log 45,0)$ by (\ref{E:TauSBound}).
 \end{proof}

By using the exactly same proof of Lemma \ref{L:ContractionOnCone2}, one can also derive a similar result for which we state here without proof.
\begin{lemma}\label{L:ContractionOnCone2'}
There exists a $\t$-invariant set $\tilde \O\subset \O$ with full $\mb P$-measure such that for all $\o\in\tilde \O$, $E\in \G_k(C(\o))$, and $u\in \tint C(\o)\cap S$,
\begin{align}\begin{split}\label{E:Contraction1'}
&\limsup_{n\to\infty}\frac1n\log \di(T^n(\o)u,T^n(\o)E)\\
\le&\limsup_{n\to\infty}\frac1n\log(\a_{C(\t^n\o)}(T^n(\o)u,T^n(\o)E)-1)\\
\le &\int_\O\log \tau_{S} (\o)d\mb P(\o)\in [\log 44-\log 45,0),
\end{split}
\end{align}
where $\di(u,E)=\inf\left\{|u-v|\big|\ v\in E\right\}$.
\end{lemma}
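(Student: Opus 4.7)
The plan is to mimic the proof of Lemma \ref{L:ContractionOnCone2} verbatim, substituting the ray $\spa\{u\}$ for one of the two $k$-dimensional subspaces. The substitution is transparent at the level of the angle-index, since $\a_C(u,E)=\sup_{v\in E\setminus\{0\}}\a_C(u,v)$ is already a supremum over rays inside $E$.

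For the right-hand inequality, I start from the vectorwise contraction of Lemma \ref{L:ContractionOnCone0}, $\a_{C(\t\o)}(A(\o)u,A(\o)v)-1\le \tau_V(A(\o))(\a_{C(\o)}(u,v)-1)$, and take supremum over $v\in E\setminus\{0\}$ to obtain
\[
\a_{C(\t^n\o)}(T^n(\o)u,T^n(\o)E)-1\le \tau_V(T^n(\o))\bigl(\a_{C(\o)}(u,E)-1\bigr).
\]
Submultiplicativity \eqref{E:SubMult} combined with Lemma \ref{L:ContractionOnCone1} (driven by C3)) upgrades $\tau_V(T^n(\o))$ to $\prod_{i=0}^{n-1}\tau_S(\t^i\o)$. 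Birkhoff's ergodic theorem applied to $\log\tau_S$, which is bounded (see \eqref{E:TauSBound}), then yields the integral bound on a $\t$-invariant full-measure set.

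For the left-hand inequality I would invoke normality of $C(\t^n\o)$ in place of the $k$-subspace estimate of Lemma \ref{L:ProjectiveMetric}(III). Condition C4) and Corollary \ref{C:NormalityAndFocusing}(b) make $C(\t^n\o)$ normal with tempered constant $b(\t^n\o)=4\chi(\t^n\o)$. Applied to the unit vector $T^n(\o)u/|T^n(\o)u|\in C(\t^n\o)\cap S$ and the subspace $T^n(\o)E\in\G_k(C(\t^n\o))$, normality bounds the distance from this unit vector to $T^n(\o)E$ by $b(\t^n\o)(\a-1)$, since the distance to the unit ball of $T^n(\o)E$ dominates the distance to the subspace itself. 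Temperedness of $\chi$ washes out $\frac1n\log b(\t^n\o)$ in the limit.

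The main obstacle is the rescaling bookkeeping in the left-hand inequality: normality naturally gives a \emph{projective} estimate, so multiplying back by $|T^n(\o)u|$ to recover $\di(T^n(\o)u,T^n(\o)E)$ introduces a nominally unwanted factor. Absent an integrability assumption on $A$, $|T^n(\o)u|$ can in principle grow exponentially and threaten the negative-rate conclusion. I expect the resolution to be that subsequent applications of this lemma pair the distance with a compensating factor $|T^n(\o)u|^{-1}$ (so the estimate is morally projective), or else restrict $u$ to vectors whose orbit norm growth is controlled via the splitting of C2). Either way, the real work is done by the strictly negative rate $\int\log\tau_S<0$ produced in the second step.
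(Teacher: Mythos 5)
Your proposal matches the paper's intended argument: the paper states this lemma without proof as following ``by exactly the same proof'' as Lemma \ref{L:ContractionOnCone2}, and your substitutions (the vectorwise coefficient $\tau_V$ with \eqref{E:SubMult} and Birkhoff for the second inequality, and normality via Corollary \ref{C:NormalityAndFocusing}(b) in place of Lemma \ref{L:ProjectiveMetric}(III) for the first) are precisely the right adaptations. Your normalization worry is legitimate and is resolved exactly as you guess: the lemma is only ever invoked on the normalized vector $T^n(\o)u/|T^n(\o)u|$ (see \eqref{E:Contraction1c} and \eqref{E:Contraction1d}), so the estimate is used projectively.
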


Lemma \ref{L:ContractionOnCone2} simply says that any two orbits of $T^n(\o)$ on $\G_{k}(C(\o))$ cluster exponentially fast forward in time. However, we still can not say that they converge to a point, which may turns out to be a stationary process.

\begin{lemma}\label{L:ContractionOnCone3}
 There exists a $\t$-invariant set $\tilde \O\subset \O$ of full $\mb P$-measure such that for any $\o\in\tilde \O$, and $E(\o)\in \G_k(C(\o))$
 \begin{align}\begin{split}\label{E:Contraction2}
 &\limsup_{n\to\infty}\frac1n\log d(T^{n}(\t^{-n}\o)E(\t^{-n}\o),T^{n+1}(\t^{-(n+1)}\o)E(\t^{-(n+1)}\o))\\
 \le&\limsup_{n\to\infty}\frac1n\log (\a_{C(\o)}(T^{n}(\t^{-n}\o)E(\t^{-n}\o),T^{n+1}(\t^{-(n+1)}\o)E(\t^{-(n+1)}\o))-1)\\
 \le& \int_\O\log \tau_{S} (\o)d\mb P(\o)\in[\log44-\log45,0).
 \end{split}
 \end{align}
 \end{lemma}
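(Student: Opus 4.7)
The plan is to adapt the proof of Lemma \ref{L:ContractionOnCone2}, but a naive transposition fails: there the ``seed" angle was a fixed constant, whereas here the base point $\t^{-n}\o$ drifts with $n$ and the two subspaces being compared both depend on $n$. The key idea is to factor out one copy of $A(\t^{-n}\o)$ from both compositions, which forces the resulting ``seed" subspaces into the image of $A(\t^{-n}\o)$ and hence, by C3), into the interior of $C(\t^{-(n-1)}\o)$ with tempered depth at least $1/\chi(\t^{-n}\o)$.

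Concretely, set $G_n:=T^n(\t^{-n}\o)E(\t^{-n}\o)$ and $H_n:=T^{n+1}(\t^{-(n+1)}\o)E(\t^{-(n+1)}\o)$. The identities $T^n(\t^{-n}\o)=T^{n-1}(\t^{-(n-1)}\o)A(\t^{-n}\o)$ and $T^{n+1}(\t^{-(n+1)}\o)=T^{n-1}(\t^{-(n-1)}\o)A(\t^{-n}\o)A(\t^{-(n+1)}\o)$ give $G_n=T^{n-1}(\t^{-(n-1)}\o)\tilde G_n$ and $H_n=T^{n-1}(\t^{-(n-1)}\o)\tilde H_n$, where
\begin{equation*}
\tilde G_n:=A(\t^{-n}\o)E(\t^{-n}\o),\qquad \tilde H_n:=A(\t^{-n}\o)\bigl[A(\t^{-(n+1)}\o)E(\t^{-(n+1)}\o)\bigr].
\end{equation*}
Both $\tilde G_n$ and $\tilde H_n$ lie in $A(\t^{-n}\o)C(\t^{-n}\o)\subset\G_k(C(\t^{-(n-1)}\o))$. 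Since $A(\t^{-n}\o)\in\mc S_{SF}(C(\t^{-n}\o),C(\t^{-(n-1)}\o))$ with strong focusing number bounded by $\chi(\t^{-n}\o)$ in view of C3), Corollary \ref{C:NormalityAndFocusing}(a) yields the tempered seed bound
\begin{equation*}
\a_{C(\t^{-(n-1)}\o)}(\tilde G_n,\tilde H_n)\le \chi\bigl(A(\t^{-n}\o)|_{C(\t^{-n}\o)},\,C(\t^{-(n-1)}\o)\bigr)\le p(\chi(\t^{-n}\o)).
\end{equation*}
Without this factorization one would need a lower bound on the depth of the arbitrary selection $E(\t^{-n}\o)$ inside $C(\t^{-n}\o)$, which the hypotheses do not supply.

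Plugging this seed bound into the contraction inequality \eqref{E:Contraction1a}--\eqref{E:Contraction1b} (derived from Lemma \ref{L:ContractionOnCone0} and sub-multiplicativity \eqref{E:SubMult}) gives
\begin{equation*}
\a_{C(\o)}(G_n,H_n)-1\le\tau\bigl(T^{n-1}(\t^{-(n-1)}\o)\bigr)\bigl(p(\chi(\t^{-n}\o))-1\bigr)\le\Bigl(\prod_{j=1}^{n-1}\tau_S(\t^{-j}\o)\Bigr)\bigl(p(\chi(\t^{-n}\o))-1\bigr).
\end{equation*}
Taking $\tfrac1n\log$ and sending $n\to\infty$, Birkhoff's ergodic theorem applied to the $\mb P$-preserving map $\t^{-1}$ drives the first factor to $\int_\O\log\tau_S\,d\mb P\in[\log44-\log45,0)$ (the range coming from \eqref{E:TauSBound}), while the second factor tends to $0$ by temperedness of $\chi$. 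This proves the second inequality of \eqref{E:Contraction2}. The first inequality (passing from $\log d$ to $\log(\a-1)$) is obtained exactly as in Lemma \ref{L:ContractionOnCone2}: apply Lemma \ref{L:ProjectiveMetric}(III) with $L=F_0(\o)$ and use C4) to secure $\udi(F_0(\o),C(\o))\ge 1/\chi(\o)$, the accompanying $\chi(\o)$-factor being tempered and vanishing after $\tfrac1n\log$. The main (and essentially only) obstacle is thus identifying the one-step-inward factorization, which converts the a priori uncontrolled seed angle into the tempered quantity $p(\chi(\t^{-n}\o))$.
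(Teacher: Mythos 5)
Your proof is correct and follows essentially the same route as the paper: the one-step factorization through $A(\t^{-n}\o)$, the bound $\a_{C(\t^{-(n-1)}\o)}(\tilde G_n,\tilde H_n)\le p(\chi(\t^{-n}\o))$ via Corollary \ref{C:NormalityAndFocusing}(a), the contraction by $\prod_{i=1}^{n-1}\tau_S(\t^{-i}\o)$, Birkhoff for the limit, and temperedness of $\chi$ to kill the seed factor are exactly the steps in the paper's argument. The reduction of the first inequality to Lemma \ref{L:ProjectiveMetric}(III) with C4) also matches.
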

\begin{proof}
 The first inequality follows the same argument as in the proof of Lemma \ref{L:ContractionOnCone2}. As for the second inequality, it follows from  C3) and a) of Corollary \ref{C:NormalityAndFocusing} that $\chi\left(A(\t^{n}\o)|_{C(\t^n\o)},C(\t^{n+1}\o)\right)\le p(\chi(\t^n\o))$. Hence, the temperedness of $\chi$ implies that there exists a full $\mb P$-measure set $\tilde \O_{1}$ such that for any $\o\in\tilde \O_{1}$,
 \begin{equation}\label{E:TemperAngle1}
 \lim_{n\to\pm\infty}\frac1n \log \chi\left(A(\t^{n}\o)|_{C(\t^n\o)},C(\t^{n+1}\o)\right)=0.
 \end{equation}
 Also note that, by Birkhoff's ergodic theorem, there exists another full $\mb P$-measure set $\tilde \O_{2}$ such that for any $\o\in\tilde \O_{2}$,
 $$\lim_{n\to\infty}\frac1n\sum_{i=1}^{n-1}\tau_{S}(\t^{-i}\o)=\int_\O\log \tau_{S} (\o)d\mb P(\o).$$
 Then for any $\o\in\tilde \O:=\tilde \O_{1}\cap\tilde \O_{2}$ and $E(\o)\in \G_{k}(C(\o))$, we have that

 \begin{align*}
 \begin{split}
 &\limsup_{n\to\infty}\frac1n\log\left[\a_{C(\o)}(T^{n}(\t^{-n}\o)E(\t^{-n}\o),
 T^{n+1}(\t^{-(n+1)}\o)E(\t^{-(n+1)}\o))-1\right]\\
 \le& \limsup_{n\to\infty}\frac1n\log\prod_{i=1}^{n-1}\tau(A(\t^{-i}\o))\\
 &\quad\cdot\left[\a_{C(\t^{-n+1}\o)}\left(A(\t^{-n}\o)E(\t^{-n}\o),
 A(\t^{-n}\o)A(\t^{-(n+1)}\o)E(\t^{-(n+1)}\o)\right)-1\right]\\
 \le &\lim_{n\to\infty}\frac1n\log\prod_{i=1}^{n-1}\tau_{S}(\t^{-i}\o)\chi(A(\t^{-n}\o)|_{C(\t^{-n}\o)},C(\t^{-n+1}\o))\\
 \le &\lim_{n\to\infty}\frac1n\log\prod_{i=1}^{n-1}\tau_{S}(\t^{-i}\o)p(\chi(\t^{-n}\o))\\
 =&\int_\O\log \tau_{S} (A(\o))d\mb P(\o)<0.
 \end{split}
\end{align*}
 Note that the $\tilde \O$ does not depends on the choice of $E(\cdot)$, and thus the proof is done.
  \end{proof}

\begin{remark}\label{R:CauchySequence}
Lemma \ref{L:ContractionOnCone3} implies that for any $\o\in\tilde \O$ and $E(\o)\in\G_{k}(C(\o))$, the sequence
$$\{T^{n}(\t^{-n}\o)E(\t^{-n}\o)\}_{n\ge 0}$$ is  a Cauchy sequence in $\G_{k}(A(\t^{-1}\o)(C(\t^{-1}\o)))\subset\G_{k}(C(\o))$. Since
$$\underline\di(A(\t^{-1}\o)(C(\t^{-1}\o)),X\setminus C(\o))>0,$$ such sequence will converge to a $k$-dimensional subspace which belongs to $\tint C(\o)$.
\end{remark}
Now we choose $E(\o)=E_0(\o)$ in Lemma \ref{L:ContractionOnCone3} for all $\o\in\O$, and denote $\mc E(\o)$ the limit of $\{T^{n}(\t^{-n}\o)E_0(\t^{-n}\o)\}_{n\ge 0}$.

\begin{prop}\label{P:InvariantSubspace1}
{\it There exists a $\t$-invariant set $\tilde \O\subset \O$ of full $\mb P$-measure such that the following are satisfied:
\begin{itemize}
\item[i)] $\mc E:\tilde \O\to \G_{k}(C)$ is measurable with respect to the ``past`` $\s$-algebra $\mc F^{-}$ generated by the random variables $(A(\t^{-n}\o))_{n\in\mb N}$;
\item[ii)] For $E(\o)\in\G_{k}(C(\o))$ and $\o\in\tilde \O$,
$$\limsup_{n\to\infty}\frac1n\log d(T^{n}(\t^{-n}\o)E(\t^{-n}\o),\mc E(\o))\le \int_\O\log \tau_{S} (\o)d\mb P(\o)<0;$$
\item[iii)] For all $\o\in\tilde \O$, $A(\o)\mc E(\o)=\mc E(\t\o)$;
\item[iv)] For all $\o\in\tilde \O$ and $E(\o)\in \G_{k}(C(\o))$,
$$\limsup_{n\to\infty}\frac1n\log d(T^{n}(\o)E(\o),\mc E(\t^{n}\o))\le \int_\O\log \tau_{S} (\o)d\mb P(\o)<0.$$
\end{itemize}
}\end{prop}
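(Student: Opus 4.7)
The plan is to build the four properties in sequence, leveraging the Cauchy structure already established in Lemma~\ref{L:ContractionOnCone3} and Remark~\ref{R:CauchySequence}: define
\[
\mc E(\o) := \lim_{n \to \infty} T^n(\t^{-n}\o) E_0(\t^{-n}\o) \in \G_k(C(\o)),
\]
the limit existing in the gap metric for $\o$ in a $\t$-invariant full-measure set $\tilde\O$.

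I would first tackle (ii), beginning with the distinguished case $E=E_0$: summing the one-step estimate of Lemma~\ref{L:ContractionOnCone3} via a geometric series gives the stated rate for $\mb P$-a.e.\ $\o$. For a general $E(\o)\in\G_k(C(\o))$, a triangle inequality reduces matters to controlling $d(T^n(\t^{-n}\o)E(\t^{-n}\o),T^n(\t^{-n}\o)E_0(\t^{-n}\o))$. Using Lemma~\ref{L:ProjectiveMetric}(III), the submultiplicativity of $\tau$, and the estimate~\eqref{E:Contraction1b}, this is bounded by a constant multiple of $\prod_{j=1}^{n}\tau_S(\t^{-j}\o)$ times an initial angle dominated by $\chi(\t^{-n}\o)$. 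Birkhoff's ergodic theorem applied to $\t^{-1}$ handles the product at rate $\int\log\tau_S\,d\mb P$, while temperedness of $\chi$ renders the angle factor subexponential, yielding the claimed $\limsup$.

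Property (iii) follows from applying $A(\o)$ to the defining sequence. Continuity of $A(\o)$ on $\G_k$ in the gap metric---a consequence of boundedness and injectivity of $A(\o)$ combined with the strong-focusing property C3) keeping images uniformly separated from $X\setminus C(\t\o)$---gives
\[
A(\o)\mc E(\o) = \lim_n A(\o) T^n(\t^{-n}\o) E_0(\t^{-n}\o) = \lim_n T^{n+1}(\t^{-n}\o) E_0(\t^{-n}\o),
\]
and the substitution $m=n+1$ identifies this right-hand limit with the defining sequence for $\mc E(\t\o)$; dimension counting upgrades the inclusion to equality. Property (iv) is then immediate from (iii) and Lemma~\ref{L:ContractionOnCone2}: using $\mc E(\t^n\o)=T^n(\o)\mc E(\o)$, one has $d(T^n(\o)E(\o),\mc E(\t^n\o)) = d(T^n(\o)E(\o),T^n(\o)\mc E(\o))$ with both arguments in $\G_k(C(\o))$, and Lemma~\ref{L:ContractionOnCone2} delivers the rate directly.

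The principal obstacle is (i), the $\mc F^-$-measurability. The approximating sequence $T^n(\t^{-n}\o) E_0(\t^{-n}\o)$ is a priori only $\mc F$-measurable, because the datum $E_0(\cdot)$ from C2) need not be past-adapted. The remedy is to exploit property (ii): since the limit does not depend on the choice of starting subspace in $\G_k(C(\t^{-n}\o))$, one may replace $E_0(\t^{-n}\o)$ by an $\mc F^-$-measurable selection $\tilde E_n(\o)\in\G_k(C(\t^{-n}\o))$ constructed via a measurable-selection argument, using that the forward iterates $T^n(\t^{-n}\o) C(\t^{-n}\o)$ form a nested $\mc F^-$-measurable family (as $T^n(\t^{-n}\o)$ is $\mc F^-$-measurable and the asymptotic image cone is encoded by the past cocycle alone, with the strong focusing of C3) ensuring nonempty $k$-sections). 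The limit of the resulting $\mc F^-$-measurable sequence equals $\mc E(\o)$ by (ii) and hence $\mc E$ is $\mc F^-$-measurable. Carrying out this selection construction cleanly---in particular verifying that the past-adapted section can be realised in $\G_k(C(\t^{-n}\o))$ despite $C(\cdot)$ itself being only $\mc F$-measurable---is the technical heart of the argument.
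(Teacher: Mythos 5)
Your treatment of (ii)--(iv) is essentially the paper's own argument: telescoping Lemma~\ref{L:ContractionOnCone3} for $E_0$, a triangle inequality plus a rerun of the contraction estimate for general $E$, interchanging $A(\o)$ with the gap-metric limit and re-indexing for (iii), and reducing (iv) to Lemma~\ref{L:ContractionOnCone2} via $\mc E(\t^n\o)=T^n(\o)\mc E(\o)$. One small difference in (iii): after the substitution the shifted sequence starts from $A(\t^{-m}\o)E_0(\t^{-m}\o)$ rather than from $E_0(\t^{-m+1}\o)$, so it is not literally the defining sequence for $\mc E(\t\o)$; you implicitly invoke (ii) (independence of the limit from the starting selection) to close this, whereas the paper runs an explicit one-step focusing plus contraction estimate --- the two are equivalent in substance, since (ii) rests on the same estimate. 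Both you and the paper lean on ``continuity of $A(\o)$'' to pass $A(\o)$ through the limit; for $k$-dimensional subspaces converging in the gap metric this is justified by choosing convergent bases and using that $A(\o)$ is injective (hence bounded below) on the finite-dimensional limit space, a point worth a line in a final write-up.

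On (i) you genuinely diverge from the paper, which disposes of measurability in one sentence (``directly from the measurability of $A$ and of $E_0$'') without addressing the $\mc F^-$ claim at all. You correctly identify that $E_0(\t^{-n}\o)$ need not be past-adapted, but your proposed remedy does not yet close the gap: the cones $C(\t^{-n}\o)$ are themselves only $\mc F$-measurable data, so a measurable selection $\tilde E_n(\o)\in\G_k(C(\t^{-n}\o))$ produced from the family $\{C(\cdot)\}$ is not obviously $\mc F^-$-measurable either, and the assertion that ``the asymptotic image cone is encoded by the past cocycle alone'' is exactly what would need proof. If you only need plain $\mc F$-measurability of $\mc E$ (which is all the subsequent arguments in the paper use), the paper's one-line justification suffices and your extra machinery is unnecessary; if you insist on the $\mc F^-$ statement as written, the selection argument must be completed, e.g.\ by exhibiting a past-measurable family of $k$-dimensional subspaces contained in $T^n(\t^{-n}\o)C(\t^{-n}\o)$, which is the one step of your plan that is still missing.
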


\begin{proof}
i) The measurability of $\mc E$ is directly from the measurability of $A$ and the measurability of $E_0(\o)$ in condition C2).

ii) For $E(\o)=E_0(\o)$, it follows from Lemma \ref{L:ContractionOnCone3}  that
\begin{align*}
&d(T^{n}(\t^{-n}\o)E_{0}(\t^{-n}\o),\mc E(\o))\\
\le& \sum_{j=n}^{\infty} d(T^{j}(\t^{-j}\o)E_{0}(\t^{-j}\o),T^{j+1}(\t^{-(j+1)}\o)E_{0}(\t^{-(j+1)}\o))<\infty.
\end{align*}
Moreover, Lemma \ref{L:ContractionOnCone3} implies that, for any given $\e>0$ with $\int_\O\log \tau_{S} (\o)d\mb P(\o)+\e<0$, there is a constant $K(\e,\o)$ such that for all $j\in\mb N$,
\begin{align*}
&d(T^{j}(\t^{-j}\o)E_{0}(\t^{-j}\o),T^{j+1}(\t^{-(j+1)}\o)E_{0}(\t^{-(j+1)}\o))\\
\le& K(\e,\o) \exp\left\{j\cdot(\int_\O\log \tau_{S} (\o)d\mb P(\o)+\e)\right\}.
\end{align*}
Thus, by the arbitrariness of $\e$, we obtain that
\begin{equation}\label{G:E0-contrcting}\limsup_{n\to\infty}\frac1n \log d(T^{n}(\t^{-n}\o)E_{0}(\t^{-n}\o),\mc E(\o))\le \int_\O\log \tau_{S} (\o)d\mb P(\o).
\end{equation}
Now for arbitrary $E(\cdot)\in\G_k(C(\cdot))$, the triangle inequality of $d$ implies that
\begin{align*}
&\limsup_{n\to\infty}\frac1n \log d(T^{n}(\t^{-n}\o)E(\t^{-n}\o),\mc E(\o))\\
\le &\limsup_{n\to\infty}\frac1n \log\Big( d(T^{n}(\t^{-n}\o)E(\t^{-n}\o),T^{n}(\t^{-n}\o)E_{0}(\t^{-n}\o))\\
&\quad+d(T^{n}(\t^{-n}\o)E_{0}(\t^{-n}\o),\mc E(\o))\Big)\\
\le&\max
 \Big\{\limsup_{n\to\infty}\frac1n \log d(T^{n}(\t^{-n}\o)E(\t^{-n}\o),T^{n}(\t^{-n}\o)E_{0}(\t^{-n}\o)),\\
&\quad\limsup_{n\to\infty}\frac1n \log d(T^{n}(\t^{-n}\o)E_{0}(\t^{-n}\o),\mc E(\o))\Big\}.
\end{align*}
 By repeating the same arguments in  Lemma \ref{L:ContractionOnCone3}, one can show that
$$\limsup_{n\to\infty}\frac1n \log d(T^{n}(\t^{-n}\o)E(\t^{-n}\o),T^{n}(\t^{-n}\o)E_{0}(\t^{-n}\o))
\le\int_\O\log \tau_{S} (\o)d\mb P(\o).$$ Therefore, together with \eqref{G:E0-contrcting}, we obtain
$$\limsup_{n\to\infty}\frac1n \log d(T^{n}(\t^{-n}\o)E(\t^{-n}\o),\mc E(\o))
\le\int_\O\log \tau_{S} (\o)d\mb P(\o).$$

iii) Note that $A(\o)$ is a bounded operator, so it is also continuous, and hence, it follows that
\begin{align}
A(\o)\mc E(\o)&=A(\o)\left(\lim_{n\to\infty}T^{n}(\t^{-n}\o)E_{0}(\t^{-n}\o)\right) \notag\\
&=\lim_{n\to\infty}T^{n}(\t^{-n+1}\o)A(\t^{-n}\o)E_{0}(\t^{-n}\o) \notag\\
&=\lim_{n\to\infty}T^{n}(\t^{-n+1}\o)E_{0}(\t^{-n+1}\o)=\mc E(\o). \label{ff}
\end{align}
To derive the first equality in \eqref{ff}, letting $E_{n}=A(\t^{-n}\o)E_{0}(\t^{-n}\o)$, it then follows from C3)-C4) and a) of Corollary \ref{C:NormalityAndFocusing} that
\begin{align*}
 &d(T^{n}(\t^{-n}\t\o)E_{n},T^{n}(\t^{-n}\t\o)E_{0}(\t^{-n+1}\o))\\
 \le& 4\chi(\o)\tau(T^{n-1}(\t^{-n+2}\o))\chi(A(\t^{-n+1}\o)|_{C(\t^{-n+1}\o)},C(\t^{-n+2}\o)),
 \end{align*}
where the right hand side converges to zero exponentially fast (c.f. the exactly same argument in the proof of Lemma \ref{L:ContractionOnCone3}). Finally, the last equality in \eqref{ff} directly follows from ii) of this Lemma.

iv) Following the same arguments as in iii), one has $\mc E(\t^{n}\o)=T^{n}(\o)\mc E(\o)$ on $\tilde \O$ and for any $E(\o)\in\G_{k}(C(\o))$
$$ d(T^{n}(\o)E(\o),T^n(\o)E_{0}(\o))\le 4\chi(\t^n\o)\tau(T^{n}(\o))(\a(E(\o),E_{0}(\o))-1).$$
Thus, we have completed the proof of this Proposition.
\end{proof}

\bigskip


Secondly, one need to construct the invariant complementary space $\mc E'(\cdot)$ with respect to $\mc E(\cdot)$, which is presented in the following Proposition \ref{P:InvariantSubspace2}. Our approach here is based on the so called graph transform method.

Hereafter, we denote $\G'_{k}(C(\o))$ the collection of all $k$-codimentional subspace $E'$ such that $E'\cap C(\o)=\{0\}$.

\begin{prop}\label{P:InvariantSubspace2}
{\it There exists a $\t$-invariant set $\tilde \O\subset \O$ of full $\mb P$-measure and a function $\mc E'(\cdot):\tilde \O\to \G'_{k}(C(\cdot))$ such that the following are satisfied:
\begin{itemize}
\item[i)] $\mc E'(\cdot):\tilde \O\to \G'_{k}(C(\cdot))$ is measurable in the sense that projections associated with the splitting $X=\mc E(\cdot)\oplus\mc E'(\cdot)$ are strongly measurable;
\item[ii)] For all $\o\in\tilde \O$, $A(\o)\mc E'(\o)\subset\mc E'(\t\o)$.
\end{itemize}
}
\end{prop}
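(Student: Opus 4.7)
The construction of $\mc E'(\o)$ is dual to that of $\mc E(\o)$. For each $n\ge 0$, I would define
$$\mc E'_n(\o) := (T^n(\o))^{-1}\bigl(F_0(\t^n\o)\bigr).$$
Injectivity of $T^n(\o)$ together with the forward cone invariance C1) and $F_0\cap C=\{0\}$ from C2) guarantee that $\mc E'_n(\o)\in\G'_k(C(\o))$, with the separation from $C(\o)$ controlled by the focusing and normality bounds C3), C4).

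The key step is to prove that $\{\mc E'_n(\o)\}_{n\ge 0}$ is Cauchy in the gap metric. Using $T^{n+1}(\o)=T^n(\t\o)A(\o)$, one may write
$$\mc E'_{n+1}(\o)=(T^n(\o))^{-1}\bigl(A(\t^n\o)^{-1}F_0(\t^{n+1}\o)\bigr),$$
so both $\mc E'_n(\o)$ and $\mc E'_{n+1}(\o)$ are pullbacks, via the same operator $T^n(\o)$, of two subspaces belonging to $\G'_k(C(\t^n\o))$. The forward contraction of $\G_k(C)$ quantified by $\tau_S$ in Lemma \ref{L:ContractionOnCone2} can be dualized, through a graph-transform argument in coordinates adapted to the splitting $\mc E(\cdot)\oplus F_0(\cdot)$, into a contraction of the pullback map on $\G'_k(\cdot)$ with factor bounded by $\prod_{i=0}^{n-1}\tau_S(\t^i\o)$. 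Together with the temperedness of $\chi$ and the angular separation constants, Birkhoff's ergodic theorem then yields
$$\limsup_{n\to\infty}\frac{1}{n}\log d(\mc E'_n(\o),\mc E'_{n+1}(\o))\le \int_\O\log\tau_S(\o)\,d\mb P(\o)<0,$$
so the sequence is Cauchy and $\mc E'(\o):=\lim_{n\to\infty}\mc E'_n(\o)$ exists in $\G'_k(C(\o))$, with the uniform separation from $C(\o)$ preserved in the limit.

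Forward invariance is then immediate. Using $(T^{n+1}(\o))^{-1}=A(\o)^{-1}(T^n(\t\o))^{-1}$, one computes
$$A(\o)\,\mc E'_{n+1}(\o)\subset (T^n(\t\o))^{-1}F_0(\t^{n+1}\o)=\mc E'_n(\t\o),$$
and passing to the gap-metric limit (continuity of $A(\o)$ on the closed transverse subspaces) gives $A(\o)\mc E'(\o)\subset \mc E'(\t\o)$. Measurability of $\mc E'(\cdot)$ follows from the explicit construction: each $\mc E'_n(\o)$ is built measurably from $A(\t^i\o)$ and $F_0(\t^n\o)$, and the pointwise gap-metric limit of measurable subspace-valued maps is measurable. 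The projections associated with $X=\mc E(\o)\oplus\mc E'(\o)$ are then strongly measurable by standard arguments, since both bundles are measurable and form a direct-sum decomposition controlled by the tempered separation constants.

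The main obstacle is the graph-transform contraction estimate underlying the Cauchy property. Translating the forward contraction of $\G_k(C)$ toward $\mc E$ into a quantitative contraction of pullbacks on the dual collection $\G'_k(C)$ requires a careful duality argument that jointly exploits the focusing bound C3) and the uniform separation C4), with bookkeeping to ensure the tempered factors do not destroy the exponential decay when composed over $n$ iterates. This mirrors, "on the other side of the splitting", the contraction mechanism used for $\mc E$ in Proposition \ref{P:InvariantSubspace1}.
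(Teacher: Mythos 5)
Your construction is, at bottom, the same as the paper's graph-transform proof: the subspaces $\mc E'_n(\o)=(T^n(\o))^{-1}F_0(\t^n\o)$ are precisely the graphs over $F_0(\o)$ of (essentially) the partial sums of the series \eqref{E:GraphFormula} defining $\Psi(\o)$ --- solve $\Pi_1(\t^n\o)T^n(\o)(v+w)=0$ for $w\in\mc E(\o)$, $v\in F_0(\o)$, and telescope. So the Cauchy property of $\{\mc E'_n(\o)\}$ in the gap metric is equivalent to the convergence of that series, and the entire analytic content of the proposition sits in the estimate you explicitly defer. That estimate is not obtained by formally ``dualizing'' Lemma \ref{L:ContractionOnCone2}: the $\a_C$-contraction of pushforwards on $\G_k(C)$ does not by itself control the pullback dynamics on codimension-$k$ complements. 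What is actually needed is the norm domination $\|\tilde T^{n}(\o)\|\,\|(T^{n}(\o)|_{\mc E(\o)})^{-1}\|\le K(\o)e^{n(\int_\O\log\tau_S\,d\mb P+2\e)}$ of Lemma \ref{L:ExponentialSeparation}, whose proof is a separate cone argument (perturb $v\in\mc E(\o)\cap S$ to $v+cu$ with $u\in F_0(\o)$ and small $c$, keep $v+cu$ in $\tint C(\o)$ via C4), and convert the angular attraction of Lemma \ref{L:ContractionOnCone2'} into a bound on $|\tilde T^{n}(\o)u|/|T^{n}(\o)(v+cu)|$), together with Lemma \ref{L:InverseNorm}, which bounds the one-step off-diagonal term $\|(A(\o)|_{\mc E(\o)})^{-1}\Pi_1(\t\o)A(\o)\Pi_2(\o)\|$ by $\chi(\t^{-1}\o)\|\Pi_2(\o)\|$ --- again by a cone argument, not by duality. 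Since you name this as ``the main obstacle'' and do not supply it, the proof is incomplete at its central step.

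A second, smaller gap: the assertion that the separation from $C(\o)$ is ``preserved in the limit'' needs justification, because there is no a priori uniform-in-$n$ lower bound on $\udi(\mc E'_n(\o),C(\o))$, so the gap-metric limit could in principle meet the cone. The paper proves $\mc E'(\o)\cap C(\o)=\{0\}$ dynamically: the tempered bound \eqref{E:GraphNorm} on $\|\Psi\|$ makes the projections of $X=\mc E(\cdot)\oplus\mc E'(\cdot)$ tempered, so $\udi(\mc E'(\t^n\o),\mc E(\t^n\o))$ decays at most subexponentially along the orbit, whereas any nonzero $v\in\mc E'(\o)\cap C(\o)$ would be attracted to $\mc E(\t^n\o)$ exponentially fast by Lemma \ref{L:ContractionOnCone2'} while $T^n(\o)v$ stays in $\mc E'(\t^n\o)$ --- a contradiction. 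You would need to add this (or an equivalent) argument as well.
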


In order to prove Proposition \ref{P:InvariantSubspace2},
we recall from Condition C2) and Proposition \ref{P:InvariantSubspace1} that, for each $\o\in\tilde \O$ with $\tilde \O$ defined in Proposition \ref{P:InvariantSubspace1}, there is a splitting $X=\mc E(\o)\oplus F_0(\o)$.
Denote by $\Pi_1(\o),\Pi_{2}(\o)$ the associated projections onto $\mc E(\o)$ and $F_0(\o)$, respectively. Together with Lemma \ref{L:ProjectionNorm},  C3)-C4) implies that
$$\|\Pi_{1}(\o)\|=\frac1{\underline\di(\mc E(\o),F_0(\o))}\le \chi(\t^{-1}\o),\,\,\,
\|\Pi_{2}(\o)\|=\frac1{\underline\di(F_0(\o),\mc E(\o))}\le \chi(\t^{-1}\o).$$
Note also that $1\ge \underline\di(\mc E(\o),F_0(\o)),\underline\di(F_0(\o),\mc E(\o))$. Then the temperedness of $\chi$ entails that
\begin{equation}\label{E:TemperedProjection}
\lim_{n\to\pm\infty}\frac1n\log \|\Pi_{i}(\t^{n}\o)\|=0,\, i=1,2,\,\o\in\tilde\O.
\end{equation}

Now we define a one-side cocycle
$$\tilde T^{n}(\o):X\to F_0(\t^n\o);\,\,\tilde T^{n}(\o)\triangleq\Pi_{2}(\t^{n}\o)T^{n}(\o)\Pi_{2}(\o),$$ which is strongly measurable. The following two lemmas are crucial to the proof of Proposition \ref{P:InvariantSubspace2}.
\begin{lemma}\label{L:ExponentialSeparation}
For any $\e>0$, there exists a tempered function $K(\cdot):\O\to [1,\infty)$ such that for any $\o\in\tilde \O$
$$\|\tilde T^{n}(\o)\|\left\|\left(T^{n}(\o)|_{\mc E(\o)}\right)^{-1}\right\|\le K(\o) e^{n(\int_\O\log \tau_{S} (\o)d\mb P(\o)+2\e)},$$
which entails that
for any $\o\in\tilde \O$ and $v\in \mc E(\o)\setminus\{0\}, u\in F_0(\o)\setminus\{0\}$,
$$\liminf_{n\to\infty}\frac1n\log\frac{|T^{n}(\o)v|}{|\tilde T^{n}(\o)u|}\ge -\int_\O\log \tau_{S} (\o)d\mb P(\o)>0.$$
\end{lemma}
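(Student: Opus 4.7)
The plan is to exploit the exponential contraction of the gap distance $d(T^n(\o)E,\mc E(\t^n\o))\to 0$ from Proposition \ref{P:InvariantSubspace1}(iv). Abbreviate $\bar\tau:=\int_\O\log\tau_S(\o)d\mb P(\o)<0$. Fix $\o\in\tilde\O$ and a unit vector $u\in F_0(\o)\cap S$; I plan to embed $u$ into a $k$-plane $E\in\G_k(C(\o))$ close to $\mc E(\o)$, apply the contraction bound to the iterate $T^n(\o)E$, and convert the resulting gap estimate into an operator-norm product estimate via the graph representation of $T^n(\o)E$ over the splitting $X=\mc E(\t^n\o)\oplus F_0(\t^n\o)$.

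Concretely, for each $n$ I would pick $v_1^{(n)}\in\mc E(\o)\cap S$ realizing (or nearly realizing) the infimum $m_n:=\inf_{v\in\mc E(\o)\cap S}|T^n(\o)v|=\|(T^n(\o)|_{\mc E(\o)})^{-1}\|^{-1}$ and complete it to an Auerbach basis $\{v_1^{(n)},\dots,v_k^{(n)}\}$ of $\mc E(\o)$. Form the perturbed $k$-plane $E_t^{(n)}:=\spa\{v_1^{(n)}+tu,v_2^{(n)},\dots,v_k^{(n)}\}$. Since $\mc E(\o)\subset\tint C(\o)$ and $\dim\mc E(\o)=k<\infty$, a compactness argument on the unit sphere of $\mc E(\o)$ produces $t_0(\o)>0$, independent of $n$ and of the unit vectors $u$, $v_1^{(n)}$, such that $E_t^{(n)}\in\G_k(C(\o))$ and the starting angle $\a_{C(\o)}(E_t^{(n)},\mc E(\o))-1$ is uniformly bounded for $0<t<t_0(\o)$. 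For such a fixed $t$, Proposition \ref{P:InvariantSubspace1}(iv) yields
$$d(T^n(\o)E_t^{(n)},\mc E(\t^n\o))\le K_1(\o,\e)e^{n(\bar\tau+\e)}.$$

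Decomposing $T^n(\o)u=a_n+b_n$ with $a_n\in\mc E(\t^n\o)$ and $b_n=\tilde T^n(\o)u\in F_0(\t^n\o)$, I represent $T^n(\o)E_t^{(n)}$ as the graph of a rank-one linear map $\phi_n:\mc E(\t^n\o)\to F_0(\t^n\o)$ with image in $\spa\{b_n\}$ (since the generators $T^n(\o)v_j^{(n)}$, $j\ge 2$, already lie in $\mc E(\t^n\o)$). Tracing the graph relation $\phi_n(T^n(\o)v_1^{(n)}+ta_n)=tb_n$ through the Auerbach expansion of $a_n$ in the basis $\{T^n(\o)v_j^{(n)}\}$ produces an identity $\phi_n(T^n(\o)v_1^{(n)})=\frac{t}{1+t\gamma_n}\,b_n$ for a scalar $\gamma_n$ controlled by the Auerbach conditioning. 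Combining with $\|\phi_n\|\le C\|\Pi_2(\t^n\o)\|\,d(T^n(\o)E_t^{(n)},\mc E(\t^n\o))$ and $|\phi_n(T^n(\o)v_1^{(n)})|\le\|\phi_n\|\,m_n$ yields
$$|\tilde T^n(\o)u|=|b_n|\le K(\o,\e)e^{n(\bar\tau+2\e)}\,m_n,$$
once the tempered factors (the projection norms from \eqref{E:TemperedProjection} and the conditioning constant $|1+t\gamma_n|$) are absorbed into $K(\o,\e)$ at the cost of enlarging the rate from $\e$ to $2\e$. Taking the supremum over $u\in F_0(\o)\cap S$ and multiplying through by $\|\Pi_2(\o)\|$ to pass from $\tilde T^n|_{F_0(\o)}$ to $\tilde T^n$ delivers the first inequality, and the $\liminf$ consequence is then immediate from the two operator-norm definitions and the arbitrariness of $\e$.

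The main obstacle I foresee is uniform control (up to tempered growth) of the scalar $\gamma_n$, equivalently of the conditioning of $\{T^n(\o)v_j^{(n)}\}$ as a basis of $\mc E(\t^n\o)$: without it, the graph identity would not cleanly convert the gap-distance bound into an estimate that carries the factor $m_n$, and one would only obtain $\|\tilde T^n(\o)\|\le K e^{n(\bar\tau+\e)}$---strictly weaker than the operator-norm product required. Handling this rests on a judicious simultaneous choice of the minimizer $v_1^{(n)}$ and the Auerbach basis of $\mc E(\o)$, together with the strong-normality condition C4) and temperedness of $\chi$; the finite-dimensionality of $\mc E(\o)$ is essential throughout.
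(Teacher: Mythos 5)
Your route is genuinely different from the paper's. The paper perturbs a \emph{vector}: for $v\in\mc E(\o)\cap S$, $u\in F_0(\o)\cap S$ and small $c>0$ it applies the vector-versus-subspace contraction (Lemma \ref{L:ContractionOnCone2'}, quantified as in \eqref{E:Contraction1c}--\eqref{E:Contraction1d}) to $w=v+cu$, uses $|\tilde T^{n}(\o)(cu)|\le\|\Pi_{2}(\t^{n}\o)\|\,\di(T^{n}(\o)(v+cu),\mc E(\t^{n}\o))$, divides by $|T^{n}(\o)(v+cu)|$ and lets $c\to0$; the infimum over $v$ then produces the factor $\|(T^{n}(\o)|_{\mc E(\o)})^{-1}\|$ with no basis-conditioning issue ever arising. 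You instead perturb the \emph{subspace} and read off $|\tilde T^{n}(\o)u|$ from the graph of $T^{n}(\o)E_t^{(n)}$ over $\mc E(\t^{n}\o)$. That is workable, and your algebra for $\phi_n$ is right, but it forces you to confront exactly the scalar $\gamma_n$ you flag, and that step is the crux, not a technicality.

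As written, the control of $\gamma_n$ is a genuine gap: $\gamma_n=v_1^{*}\bigl((T^{n}(\o)|_{\mc E(\o)})^{-1}\Pi_{1}(\t^{n}\o)T^{n}(\o)u\bigr)$, and a priori $\|(T^{n}|_{\mc E(\o)})^{-1}\Pi_{1}(\t^{n}\o)T^{n}(\o)\|$ can grow exponentially in $n$; no choice of basis of the \emph{fixed} space $\mc E(\o)$ can repair that, so ``a judicious choice of the Auerbach basis'' is not enough. What closes it is the cone argument of Lemma \ref{L:InverseNorm} run with $T^{n}$ in place of $A$: if $\bigl|(T^{n}|_{\mc E(\o)})^{-1}\Pi_{1}(\t^{n}\o)T^{n}(\o)u\bigr|\ge\chi(\t^{-1}\o)$, then for some $c\in[0,\chi(\t^{-1}\o)^{-1}]$ the vector $T^{n}(\o)(v+cu)$ (with $v$ the normalized negative of that vector, so that $v+cu\in\tint C(\o)$ by \eqref{E:RangeOf c}) would lie in $F_0(\t^{n}\o)\setminus\{0\}$, contradicting C1)--C2); hence $|\gamma_n|\le\chi(\t^{-1}\o)$ once $\|v_1^{*}\|\le1$. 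Note also that you cannot in general complete a prescribed minimizer $v_1^{(n)}$ to an Auerbach basis; but you only need the single coordinate functional $v_1^{*}$ to have norm one, which Hahn--Banach always provides. Two smaller repairs: Proposition \ref{P:InvariantSubspace1}(iv) is a $\limsup$ statement for a fixed $E$, whereas your $E_t^{(n)}$ varies with $n$, so you must invoke the uniform quantitative bound $d(T^{n}(\o)E,\mc E(\t^{n}\o))\le 4\chi(\t^{n}\o)K_{\e}(\o)e^{n(\int_\O\log\tau_S\,d\mb P+\e)}(\a_{C(\o)}(E,\mc E(\o))-1)$ from its proof; and $E_t^{(n)}\in\G_k(C(\o))$ with uniformly bounded initial angle follows directly from C3)--C4) (no compactness argument is needed, and none is available on $C(\o)\cap S$ in infinite dimensions). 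With these points supplied your argument does deliver the stated estimate; the trade-off is that the paper's vector-perturbation trick avoids the conditioning issue entirely, while your graph formulation makes the geometric mechanism (why the $F_0$-component inherits the factor $m_n$) more explicit.
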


\begin{proof}
Given any $\e>0$, we define
$$K_{\e}(\o):=\sup_{n\ge1}\left\{\frac{\prod_{i=0}^{n-1}\tau_{S}(\t^{i}\o)}{e^{n(-\d_1+\e)}}\right\},$$
where we take $\d_1=-\int_\O\log \tau_{S} (\o)d\mb P(\o)>0$. It then follows from Birkhoff's ergodic theorem that $K_{\e}(\cdot)<\infty$ $\mb P$-a.e.. Moreover, a straightforward calculation yields that
$\log K_{\e}(\o)-\log K_{\e}(\t^{-1}\o)\le -\d_1+\e-\log\tau_{S}(\t^{-1}\o)\in L^{1}(\mb P).$
It then follows from Lemma \ref{L:TemperedFunctionA} in the appendix that \begin{equation}\label{E:TemperedFunction2}
\lim_{n\to\pm\infty}\frac1n \log K_{\e}(\t^{n}\o)=0.
\end{equation}

For any $E(\o)\in \G_{k}(C(\o))$ and $w\in \tint C(\o)\cap S$, it follows from Condition C3)-C4), b) of Corollary \ref{C:NormalityAndFocusing} and Lemma \ref{L:ContractionOnCone2'} (using inequalities adapted to Lemma  \ref{L:ContractionOnCone2'} which are analogous to (\ref{E:Contraction1a})-(\ref{E:Contraction1b})) that
\begin{equation}\label{E:Contraction1c}
\di\left(\frac{T^{n}(\o)w}{|T^{n}(\o)w|},T^{n}(\o)E(\o)\right)\le 4\chi(\t^n\o)K_{\e}(\o)e^{n(-\d_1+\e)}(\a_{C(\o)}(w,E(\o))-1).
\end{equation}
In particular, by letting $E(\o)=\mc E(\o)$ in \eqref{E:Contraction1c}, one can further deduce from  a) of Corollary \ref{C:NormalityAndFocusing} and II) of Lemma \ref{L:ProjectiveMetric} that,
for any $w\in \tint C(\o)\cap S$,
 \begin{align}\begin{split}\label{E:Contraction1d}
 &\di\left(\frac{T^{n}(\o)w}{|T^{n}(\o)w|},\mc E(\t^n\o)\right)\\
 \le& 848\chi(\t^n\o)K_{\e}(\o)\left(\max\left\{\chi(\t^{-1}\o),(\di(w,X\setminus C(\o)))^{-1}\right\}\right)^{4}e^{n(-\d_1+\e)}\di(w,\mc E(\o)).
 \end{split}
 \end{align}

Now, for any $v\in \mc E(\o)\cap S$ and $u\in F_0(\o)\cap S$, since $\underline\di(\mc E(\o),X\setminus C(\o))\ge (\chi(\t^{-1}\o))^{-1}$, one has
 \begin{equation}\label{E:RangeOf c}
 v+cu\in \tint C(\o)\,\,\,\text{ for any }c\in \left[0,(\chi(\t^{-1}\o))^{-1}\right).
 \end{equation}
In particular, when $c\in\left(0,\frac12(\chi(\t^{-1}\o))^{-1}\right)$, one has $$\di\left(\frac{v+cu}{|v+cu|},X\setminus C(\o)\right)\ge \frac13(\chi(\t^{-1}\o))^{-1}.$$
So, by taking $w=v+cu$ in \eqref{E:Contraction1d} with $c\in\left(0,\frac12(\chi(\t^{-1}\o))^{-1}\right)$, we obtain
 \begin{align}\begin{split}\label{E:Contraction1d-02}
 &\di\left(\frac{T^{n}(\o)(v+cu)}{|T^{n}(\o)(v+cu)|},\mc E(\t^n\o)\right)\\
 \le& 848\chi(\t^n\o)K_{\e}(\o)(3\chi(\t^{-1}\o))^4e^{n(-\d+\e)}\di\left(\frac{v+cu}{|v+cu|},\mc E(\o)\right).
 \end{split}
 \end{align}
On the other hand, it is easy to see from Lemma \ref{L:ProjectionNorm} that
\begin{align*}\begin{split}
 \di\left(\frac{\tilde{T}^{n}(\o)(cu)}{|\tilde{T}^{n}(\o)(cu)|},\mc E(\t^n\o)\right)\ge \frac{1}{\|\Pi_{2}(\t^{n}\o)\|},
 \end{split}
 \end{align*}
and hence,
$$|\tilde{T}^{n}(\o)(cu)|\le \di\left(T^{n}(\o)(v+cu),\mc E(\t^n\o)\right)\cdot\|\Pi_{2}(\t^{n}\o)\|.$$
Together with \eqref{E:Contraction1d-02}, this then implies that
$$
 \frac{|\tilde{T}^{n}(\o)(cu)|}{|T^{n}(\o)(v+cu)|}
 \le 848\chi(\t^n\o)K_{\e}(\o)(3\chi(\t^{-1}\o))^4\|\Pi_{2}(\t^{n}\o)\|
 e^{n(-\d+\e)}\di\left(\frac{v+cu}{|v+cu|},\mc E(\o)\right).
 $$
 Recall that $\di(u,\mc E(\o))\le 1$. Then we have
\[
 \frac{|\tilde{T}^{n}(\o)(u)|}{|T^{n}(\o)(v+cu)|}
 \le 848\chi(\t^n\o)K_{\e}(\o)(3\chi(\t^{-1}\o))^4\|\Pi_{2}(\t^{n}\o)\|
 e^{n(-\d+\e)}(1-|cu|)^{-1},
\]
whenever $c\in\left(0,\frac12(\chi(\t^{-1}\o))^{-1}\right)$.

Letting $c\to 0$ and noticing the arbitrariness of $v,u$,  we have that for all $n\ge 1$,
 \begin{align}\begin{split}\label{E:ExponentialSeparationNI}
 &\|\tilde T^{n}(\o)\|\left\|\left(T^{n}(\o)|_{\mc E(\o)}\right)^{-1}\right\|\\
 \le& 848\chi(\t^{n}\o)K_{\e}(\o)(3\chi(\t^{-1}\o))^{4}\|\Pi_{2}(\t^{n}\o)\|e^{n(-\d+\e)}\\
 \le&848K'(\o)K_{\e}(\o)(3\chi(\t^{-1}\o))^{4}e^{n(-\d+2\e)},
 \end{split}\end{align}
 where $K'(\o)$ is a tempered function given by Lemma \ref{L:TemperedFunctionB} in the appendix with $f$ replaced by $\chi(\cdot)\|\Pi_{2}(\cdot)\|$ (see (\ref{E:TemperedProjection})). Actually, we can take
 \begin{equation}\label{E:TemperedFunction1}
 K'(\o)= \sup_{n\in\mb Z}\left\{\chi(\t^n\o)\|\Pi_{2}(\t^{n}\o)\|e^{-|n|\e}\right\}.
 \end{equation}
Since $\e$ can be chosen arbitrarily small, together with Condition C3) and (\ref{E:TemperedFunction2}),  we complete the proof by taking
$$K(\o)=848K'(\o)K_{\e}(\o)(3\chi(\t^{-1}\o))^{4}.$$
Thus, we have proved the Lemma.
\end{proof}

\begin{lemma}\label{L:InverseNorm}
For any $\o\in\tilde \O$,
$$\left\|\left(A(\o)|_{\mc E(\o)}\right)^{-1}\Pi_{1}(\t\o)A(\o)\Pi_{2}(\o)\right\|\le \chi(\t^{-1}\o)\|\Pi_{2}(\o)\|,$$
where the right hand side is tempered.
\end{lemma}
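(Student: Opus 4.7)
Fix $\o \in \tilde\O$. My strategy is to reduce the operator-norm estimate to bounding the linear map $L : F_0(\o) \to \mc E(\o)$ defined by $Lu := (A(\o)|_{\mc E(\o)})^{-1}\Pi_1(\t\o)A(\o)u$; since the displayed operator equals $L \circ \Pi_2(\o)$, once I show $\|L\| \le \chi(\t^{-1}\o)$ the lemma follows. The temperedness of the bound is automatic: $\chi(\cdot)$ is tempered by C3) and $\|\Pi_2(\cdot)\|$ is tempered by \eqref{E:TemperedProjection}, and products of tempered functions are tempered.

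The geometric key is to introduce the auxiliary closed subspace $G(\o) := A(\o)^{-1}(F_0(\t\o))$. For $u \in F_0(\o)$ and $w = Lu \in \mc E(\o)$, the defining identity $A(\o)w = \Pi_1(\t\o)A(\o)u$ gives $A(\o)(u-w) = \Pi_2(\t\o)A(\o)u \in F_0(\t\o)$, so $u - w \in G(\o)$. A direct check shows $G(\o) \cap C(\o) = \{0\}$: if $v \in G(\o) \cap C(\o)$, then C1) yields $A(\o)v \in C(\t\o)$ while $A(\o)v \in F_0(\t\o)$ by definition of $G(\o)$, so C2) at $\t\o$ forces $A(\o)v = 0$, and injectivity of $A(\o)$ gives $v = 0$. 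Hence $G(\o)\setminus\{0\} \subset X \setminus C(\o)$.

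Next I use that $\mc E(\o)$ is separated from $X \setminus C(\o)$: since $\mc E(\o) = A(\t^{-1}\o)\mc E(\t^{-1}\o) \subset A(\t^{-1}\o)C(\t^{-1}\o)$ by Proposition \ref{P:InvariantSubspace1}(iii) together with $\mc E(\t^{-1}\o) \in \G_k(C(\t^{-1}\o))$, the strong focusing condition C3) applied at $\t^{-1}\o$ gives
$$\underline\di(\mc E(\o), X\setminus C(\o)) \ge \underline\di(A(\t^{-1}\o)C(\t^{-1}\o), X \setminus C(\o)) \ge 1/\chi(\t^{-1}\o).$$
Combined with the inclusion $G(\o)\setminus\{0\} \subset X\setminus C(\o)$ and noting $1/\chi(\t^{-1}\o) \le 1$ so that inclusion of $0 \in G(\o)$ in the outer infimum costs nothing, I obtain $\underline\di(\mc E(\o), G(\o)) \ge 1/\chi(\t^{-1}\o)$.

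The bound $\|L\| \le \chi(\t^{-1}\o)$ is now a one-line normalization. Assuming $w \ne 0$ (the case $w = 0$ being trivial), both $w/|w| \in \mc E(\o) \cap S$ and $-(u-w)/|w| \in G(\o)$, so by the definition of $\underline\di$,
$$\frac{|u|}{|w|} = \left|\frac{w}{|w|} - \left(-\frac{u - w}{|w|}\right)\right| \ge \underline\di(\mc E(\o), G(\o)) \ge \frac{1}{\chi(\t^{-1}\o)},$$
i.e., $|Lu| = |w| \le \chi(\t^{-1}\o)|u|$. The only step requiring any insight is introducing the subspace $G(\o)$ and exploiting that cone invariance under $A(\o)$ forces $G(\o)$ to miss $C(\o)$; after that, everything reduces to the projection estimate and I do not anticipate any serious obstacle.
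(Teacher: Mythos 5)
Your proof is correct and is essentially the paper's argument in different packaging: the paper fixes a unit $u\in F_0(\o)$, sets $v=-Lu/|Lu|$, and derives a contradiction from $v+cu\in\tint C(\o)$ for $c\in[0,(\chi(\t^{-1}\o))^{-1})$ at the critical value $c=1/|Lu|$ where $A(\o)(v+cu)$ lands in $F_0(\t\o)\setminus\{0\}$ --- which is precisely your observation that $(u-Lu)/|Lu|$ lies in $G(\o)=A(\o)^{-1}(F_0(\t\o))$, a subspace meeting $C(\o)$ only at $0$, while $\udi(\mc E(\o),X\setminus C(\o))\ge 1/\chi(\t^{-1}\o)$. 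Both arguments rest on exactly the same two inputs (strong focusing at $\t^{-1}\o$, and C1) together with C2) at $\t\o$ plus injectivity of $A(\o)$), so your route coincides with the paper's up to reformulation as a projection-norm bound.
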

\begin{proof}
For any $u\in F_0(\o)\cap S$,  we will show that
 $$\left|\left(A(\o)|_{\mc E(\o)}\right)^{-1}\Pi_{1}(\t\o)A(\o)u\right|\le \chi(\t^{-1}\o).$$
Without losing any generality, we assume that $\left(A(\o)|_{\mc E(\o)}\right)^{-1}\Pi_{1}(\t\o)A(\o)u\neq 0$, since otherwise it is travail.
Now we take $$v=-\frac{\left(A(\o)|_{\mc E(\o)}\right)^{-1}\Pi_{1}(\t\o)A(\o)u}{\left|\left(A(\o)|_{\mc E(\o)}\right)^{-1}\Pi_{1}(\t\o)A(\o)u\right|}.$$
By (\ref{E:RangeOf c}), we have that
\begin{equation}\label{E:RangeOf c1}
\text{ for any } c\in[0,(\chi(\t^{-1}\o))^{-1}],\ A(\o)(v+cu)\in\tint C(\t\o).
\end{equation}
To ensure (\ref{E:RangeOf c1}), it is necessary to have that
$c-\frac1{\left|\left(A(\o)|_{\mc E(\o)}\right)^{-1}\Pi_{1}(\t\o)A(\o)u\right|}$ does not change sign for $c\in[0,(\chi(\t^{-1}\o))^{-1}]$. Otherwise, when $c=\frac1{\left|\left(A(\o)|_{\mc E(\o)}\right)^{-1}\Pi_{1}(\t\o)A(\o)u\right|}\in[0,(\chi(\t^{-1}\o))^{-1}]$, then
$$A(\o)(v+cu)=\Pi_2A(\o)(cu)\in F_0(\t\o)-\{0\},$$
which contradicts C1). Thus
$$\left|\left(A(\o)|_{\mc E(\o)}\right)^{-1}\Pi_{1}(\t\o)A(\o)u\right|\le \chi(\t^{-1}\o).$$
By the arbitrariness of $u$ and, the proof is completed.
\end{proof}


\begin{proof}[Proof of Proposition \ref{P:InvariantSubspace2}.] We will construct $\mc E'(\o)$ by viewing it as the graph of a strongly measurable function $\Psi:\O\to  L(X)$. The proof follows the procedure in \cite{M}, which is also used in \cite{LL}. We request $\Psi$ to satisfy the following properties for any $\o\in\tilde \O$:
\begin{align*}
\quad\quad &\Psi(\o)=\Psi(\o)\Pi_{2}(\o),\\
&\Psi(\o)(F_0(\o))\subset \mc E(\o),\\
&A(\o)\mc G(\Psi(\o))=\mc G(\Psi(\t\o)),
\end{align*}
where $\mc G(\Psi(\o)):=\{v+\Psi(\o)v|\, v\in F_0(\o)\}$ is the graph of $\Psi(\o)$ over $F_0(\o)$.  First, it is not difficult to see the following statements are equivalent:
\begin{align*}
&A(\o)(v+\Psi(\o)v)=v'+\Psi(\t\o)v',\text{ where }v\in F_0(\o),v'\in F_0(\t\o),\\
&\Pi_{1}(\t\o)A(\o)v+A(\o)\Psi(\o)v=\Psi(\t\o)v',\ \tilde T(\o)v=v',\\
&\left(A(\o)|_{\mc E(\o)}\right)^{-1}\Psi(\t\o)\left(\tilde T(\o)|_{F_0(\o)}\right)-\Psi(\o)=\left(A(\o)|_{\mc E(\o)}\right)^{-1}\Pi_{1}(\t\o)\left(A(\o)|_{F_0({\o})}\right).
\end{align*}
Thus, if such graph exists, then it should satisfy the following formula,
\begin{equation}\label{E:GraphFormula}
\Psi(\o)=-\sum_{n=0}^{\infty}\left(T^{n+1}|_{\mc E(\o)}\right)^{-1}\Pi_{1}(\t^{n+1}\o)A(\t^{n}\o)\Pi_{2}(\t^{n}\o)T^{n}(\o)\Pi_{2}(\o).
\end{equation}
The strong measurability of above series follows the same argument as in \cite{LL} once it converges. The (absolute) convergency of above series follows Lemmas \ref{L:ExponentialSeparation}-\ref{L:InverseNorm} and (\ref{E:TemperedProjection}). Furthermore, for a small enough $\e>0$, we apply Lemma \ref{L:TemperedFunctionB} to $f=\chi(\t^{-1}\cdot)\|\Pi_{2}(\cdot)\|$ (which is the upper bound of $\|(A(\cdot)|_{\mc E(\cdot)})^{-1}\Pi_{1}(\t\cdot)A(\cdot)\Pi_{2}(\cdot)\|$ by Lemma \ref{L:InverseNorm}) and then obtain that
\begin{equation}\label{E:GraphNorm}
\|\Psi(\o)\|\le K(\o)R(\o)\frac1{1-e^{\int_\O\log \tau_{S} (\o)d\mb P(\o)+3\e}},
\end{equation}
where $R(\omega)$ is from Lemma \ref{L:TemperedFunctionB} (with $f=\chi(\t^{-1}\cdot)\|\Pi_{2}(\cdot)\|$). Therefore, the right hand side is tempered.

\medskip
Now let $$\pi'(\o)=\Pi_{2}(\o)+\Psi(\o),\ \pi(\o)=I-\pi'(\o),\text{ and }\, \mc E'(\o)=\mc G(\Psi(\o)).$$
It is easy to see that i) and ii) of Proposition \ref{P:InvariantSubspace2} follows the procedure of constructing $\Psi$, once one can show that $\mc E'(\o)\cap C(\o)=\{0\}$.

To show $\mc E'(\o)\cap C(\o)=\{0\}$, first, we show that $\pi(\o),\pi'(\o)$ are projections associated to the splitting $X=\mc E(\o)\oplus \mc E'(\o)$. Note that, by definition,
$$\mc E'(\o)=\pi'(\o)X.$$
And on the other hand, for any $u\in \mc E'(\o)$, by definition, there exists $u'\in F_{0}(\o)$ such that
$$u=u'+\Psi(\o)u'.$$
Then
\begin{align*}
\pi'(\o)u&=(\Pi_{2}(\o)+\Psi(\o))(u'+\Psi(\o)u')\\
&=(\Pi_{2}(\o)+\Psi(\o))u'+(\Pi_{2}(\o)+\Psi(\o))\Psi(\o)u'\\
&=u'+\Psi(\o)u'+0+0=u.
\end{align*}
So, $\pi'(\o)$ is indeed a projection from $X$ to $\mc E'(\o)$; and hence,  $\pi(\o)$ is a projection from $X$ to $\mc E(\o)$.

By the temperedness of $\|\Pi_{2}(\cdot)\|$ and $\|\Psi(\cdot)\|$, we have that $\|\pi(\cdot)\|,\|\pi'(\cdot)\|$ are tempered, which, together with Lemma \ref{L:ProjectionNorm}, imply that
\begin{equation}\label{E:TemperedAngleInvariant}
\lim_{n\to\infty}\frac1n\log\underline\di(\mc E'(\t^{n}\o),\mc E(\t^{n}\o))=0,
\end{equation}
which, together with Lemma \ref{L:ContractionOnCone2'}, imply that $\mc E'(\cdot)\cap C(\cdot)=\{0\}$. Thus, we have completed the proof of Proposition \ref{P:InvariantSubspace2}.
\end{proof}

\medskip
Now, we are ready to prove Theorem \ref{T:ConeImpliesSplit}.

\begin{proof}[Proof of Theorem \ref{T:ConeImpliesSplit}.]

D1) follows Propositions \ref{P:InvariantSubspace1} and \ref{P:InvariantSubspace2} straightforwardly, if one take $\tilde \O$ to be the intersection of $\tilde \O$'s derived in Propositions \ref{P:InvariantSubspace1} and \ref{P:InvariantSubspace2}, and $E(\o)=\mc E(\o)$, $F(\o)=\mc E'(\o)$ for all $\o\in\tilde \O$.  \\

D2) Noting that
$$2\ge\underline\di (E(\o),F(\o))= \underline\di(A(\t^{-1}\o)E(\t^{-1}\o),F(\o))\ge \frac1{\chi(\t^{-1}\o)},$$
by which and Condition C3), the proof of D2) has been completed.\\
Furthermore, as a consequence of Lemma \ref{L:ProjectionNorm}, one also has that
\begin{equation}\label{E:TemperAngle'}
\lim_{n\to\pm\infty}\frac1n\log\underline\di(F(\t^{n}(\o)),E(\t^{n}(\o)))=0.
\end{equation}

D3) follows from Lemma \ref{L:ExponentialSeparation} directly if we take $\mc E'(\o)$ to replace $F_0(\o)$, and one can take $\d=-\int_\O\log \tau_{S} (\o)d\mb P(\o)-2\e>0$ for any sufficient small $\e$.
Thus, We have completed the proof for ergodic case.

For the non-ergodic case, all the constants we derived here become $\t$-invariant functions. In particular,
$$\overline\tau_{S}(\o):=\lim_{n\to\pm\infty}\frac1n\sum_{i=0}^{n-1}\tau_{S}(\t^i\o)\text{ exists and }\overline\tau_{S}(\t\o)=\overline\tau_{S}(\o)\in[\log44-\log45,0)\ \mb P-a.e. ,$$
and
$$\d(\o):=-\overline\tau_{S}(\o)-2\e(\o)>0,\text{ where }0<\e(\o)<<-\overline\tau_{S}(\o) \text{ is } \t-\text{invariant}.$$  We have completed the proof.
\end{proof}

\subsection{Proof of Theorem \ref{T:SplitImpliesConeW} and \ref{T:SplitImpliesCone1d}}\label{S:Theorem3&5}

In this subsection, we will prove Theorems \ref{T:SplitImpliesConeW} and \ref{T:SplitImpliesCone1d}. Actually, they follows the same way of constructing (eventually) measurable cone family from  a measurably dominated splitting $X=E(\o)\oplus F(\o)$ satisfying D1)-D3).

Let $K(\o)$ and $\d(\o)$ be defined in D3). Then, for a given $\t$-invariant function $\e:\O\to (0,\d)$ with $\e(\cdot)\ll \d(\cdot)$, by Lemma \ref{L:TemperedFunctionB}, there is a function $K':\O\to [1,\infty)$ such that
\begin{equation}\label{E:TemperControl}
 K(\o)\le K'(\o)\text{ and } e^{-\e(\o)}K'(\o)\le K'(\t\o)\le e^{\e(\o)}K'(\o).
\end{equation}

For any $\o\in\tilde \O$ and $v\in X\setminus \{0\}$, we define an index
\begin{equation}\label{E:IndexZeta}
\zeta_\o(v)=\begin{cases}
\infty,&\text{ for }v\in F(\o)\\
0,&\text{ for }v\in E(\o)\\
\sum_{n=0}^\infty\frac{|T^n(\o)v^F(\o)|}{|T^n(\o)v^E(\o)|}e^{\frac12n\d(\o)}, &\text{ otherwise}
\end{cases},
\end{equation}
where we denote $v^E(\o)=\pi_E(\o)v$ and $v^F(\o)=\pi_F(\o)v$.

By virtue of (\ref{E:UniformSeparation}) in D3) and the property of $K'$, it is easy to see that
\begin{equation}\label{E:Zeta1}
\frac{|v^F(\o)|}{|v^E(\o)|}\le \zeta_\o(v)\le \left(1+\frac{K'(\o)e^{-\frac12\d(\o)+\e(\o)}}{1-e^{-\frac12\d(\o)+\e(\o)}}\right)\frac{|v^F(\o)|}{|v^E(\o)|}, \text{ for }v\notin F(\o).
\end{equation}

\begin{lemma}\label{L:IndexPro}
For any $v\in X$ and $k\ge 1$,
$$\zeta_{\t^k\o}(T^k(\o)v)\le e^{-\frac12k\d(\o)}\zeta_{\o}(v).$$
\end{lemma}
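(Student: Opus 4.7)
The statement is essentially a tail‑of‑series identity, so my plan is to reduce it to the definition and use three structural facts: $E$ and $F$ are invariant (so $T^k(\o)v^E(\o) = (T^k(\o)v)^E(\t^k\o)$ and likewise for $F$), the cocycle identity $T^n(\t^k\o)\,T^k(\o) = T^{n+k}(\o)$, and the $\t$‑invariance of $\d$ (from D3), which gives $\d(\t^k\o) = \d(\o)$. The main step will then be an index shift in the summation.

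First I would dispose of the degenerate cases. If $v \in F(\o)$, then $T^k(\o)v \in F(\t^k\o)$ by D1, so both sides equal $+\infty$ under the natural convention. If $v \in E(\o)$, then $T^k(\o)v \in E(\t^k\o)$ and both sides vanish. So I only need to treat the generic case $v \notin E(\o) \cup F(\o)$, in which case $v^E(\o), v^F(\o)$ are both nonzero, and the same holds after applying $T^k(\o)$ by injectivity of the cocycle and invariance of the splitting.

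The core computation is then direct: setting $w = T^k(\o)v$, I write
\begin{align*}
\zeta_{\t^k\o}(w)
&= \sum_{n=0}^\infty \frac{|T^n(\t^k\o)\,w^F(\t^k\o)|}{|T^n(\t^k\o)\,w^E(\t^k\o)|}\, e^{\frac12 n\d(\t^k\o)} \\
&= \sum_{n=0}^\infty \frac{|T^{n+k}(\o)\,v^F(\o)|}{|T^{n+k}(\o)\,v^E(\o)|}\, e^{\frac12 n\d(\o)},
\end{align*}
using invariance of $E, F$ to commute $T^k(\o)$ with $\pi_E, \pi_F$, the cocycle identity to combine the two time steps, and $\d(\t^k\o)=\d(\o)$. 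Factoring out $e^{-\frac12 k\d(\o)}$ and reindexing $m = n+k$ gives
\[
\zeta_{\t^k\o}(w) = e^{-\frac12 k\d(\o)} \sum_{m=k}^\infty \frac{|T^m(\o)\,v^F(\o)|}{|T^m(\o)\,v^E(\o)|}\, e^{\frac12 m\d(\o)} \le e^{-\frac12 k\d(\o)}\,\zeta_\o(v),
\]
the last inequality being that the tail from $m=k$ is bounded by the full sum from $m=0$ (all terms are nonnegative).

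There is no real obstacle here; the only subtlety worth flagging is the $\t$-invariance of $\d$, which is exactly what makes the exponential factor $e^{\frac12 n\d(\cdot)}$ transport cleanly between fibers, and the invariance of the splitting, which lets me identify $T^k(\o)v^E(\o)$ with $(T^k(\o)v)^E(\t^k\o)$ rather than having to apply the projection $\pi_E(\t^k\o)$ to $T^k(\o)v$ directly. Both are already guaranteed by the measurably dominated splitting assumption (D1)--(D3).
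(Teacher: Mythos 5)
Your proof is correct and follows essentially the same route as the paper: the paper does the identical index-shift computation for $k=1$ and then invokes induction, whereas you carry out the shift directly for general $k$, which is a trivial variation. The structural facts you invoke (invariance of the splitting, the cocycle identity, $\t$-invariance of $\d$, and nonnegativity of the discarded terms) are exactly what the paper's argument rests on.
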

\begin{proof}
It is trivial when $v\in E(\o)$ or $F(\o)$. For $v\notin E(\o), F(\o)$, a direct calculation yields that
\begin{align*}
\zeta_{\t\o}(T(\o)v)=&\sum_{n=0}^\infty\frac{|T^{n+1}(\o)v^F(\o)|}{|T^{n+1}(\o)v^E(\o)|}e^{\frac12n\d(\o)}\\
=&e^{-\frac12\d(\o)}\left(\sum_{n=1}^\infty\frac{|T^n(\o)v^F(\o)|}{|T^n(\o)v^E(\o)|}e^{\frac12n\d(\o)}\right)\\
\le&e^{-\frac12\d(\o)}\left(\sum_{n=0}^\infty\frac{|T^n(\o)v^F(\o)|}{|T^n(\o)v^E(\o)|}e^{\frac12n\d(\o)}\right)\\
\le&e^{-\frac12\d(\o)}\zeta_{\o}(v).
\end{align*}
Thus, the inequality holds for $k=1$. By an induction on $k$, one can obtain the inequality for $k>1$.
\end{proof}
Now for each $\o\in \tilde{\O}$, we define
\begin{equation}\label{E:WeakCone}
C(\o)=\{v\in X:\zeta_\o(v)\le 1\}.
\end{equation}
By Lemma \ref{L:IndexPro} and the definition of $\zeta$ in (\ref{E:IndexZeta}), C1) and C2) are satisfied, because we have that
$$E(\o)\subset C(\o),\, F(\o)\cap C(\o)=\{0\}, \text{ and }\, T(\o)C(\o)\subset C(\t\o).$$
\vskip 3mm

\begin{proof}[Proof of Theorem \ref{T:SplitImpliesConeW}]
In order to prove Theorem \ref{T:SplitImpliesConeW}, it suffices to show C3') is satisfied.

Note that for any $n\ge 1$, $w\in T^n(\o)C(\o)\cap S$ and $u\in X\setminus C(\t^n\o)$, it is clear that
\begin{equation}\label{E:Zeta3}
|w-u|\ge\frac1{\|\pi_E(\t^n\o)\|+1}\max\left\{|\pi_E(\t^n\o)(w-u)|,|\pi_F(\t^n\o)(w-u)|\right\}.
\end{equation}
Moreover, it follows from  Lemma \ref{L:IndexPro} and (\ref{E:Zeta1}) that
\begin{align}\begin{split}\label{E:Zeta2}
|\pi_E(\t^n\o)w|&\ge e^{\frac12n\d(\o)}|\pi_F(\t^n\o)w|\\
|\pi_E(\t^n\o)u|&\le \left(1+\frac{K'(\t^n\o)e^{-\frac12\d(\o)+\e(\o)}}{1-e^{-\frac12\d(\o)+\e(\o)}}\right)|\pi_F(\t^n\o)u|.
\end{split}
\end{align}
By virtue of (\ref{E:TemperControl}), there exists some $l(\o)\in \mb N$ such that, for any $n\ge l(\o)$,
\begin{equation}\label{E:K-Prim-growth}
e^{\frac12n\d(\o)}>\max\left\{7, 8\left(1+\frac{K'(\t^n\o)e^{-\frac12\d(\o)+\e(\o)}}{1-e^{-\frac12\d(\o)+\e(\o)}}\right)\right\}.
\end{equation}
Let $N(\o)$  be the infimum of such $l(\o)$, which is obviously measurable. For any $n\ge N(\o)$, by the first equality of (\ref{E:Zeta2}), we have $\frac76>|\pi_E(\t^n\o)w|>\frac78$.

Now we consider two cases: a) $|\pi_E(\t^n\o)(w-u)|\ge\frac14$;
and b) $|\pi_E(\t^n\o)(w-u)|<\frac14$. \\
 For case a), (\ref{E:Zeta3}) directly entails that
 $$|w-u|\ge\frac1{4(\|\pi_E(\t^n\o)\|+1)}.$$
 If case b) holds, then $|\pi_E(\t^n\o)u|>|\pi_E(\t^n\o)w|-\frac14$. Therefore,
 it follows from \eqref{E:Zeta2}, \eqref{E:K-Prim-growth} and $|\pi_E(\t^n\o)w|>\frac78$ that
 \begin{align*}
|\pi_F(\t^n\o)(w-u)|&\ge |\pi_F(\t^n\o)u|-|\pi_F(\t^n\o)w|\\
&\ge \left(1+\frac{K'(\t^k\o)e^{-\frac12\d(\o)+\e(\o)}}{1-e^{-\frac12\d(\o)+\e(\o)}}\right)^{-1}
 \left(|\pi_E(\t^n\o)u|-\frac18|\pi_E(\t^n\o)w|\right)\\
 &\ge \frac12\left(1+\frac{K'(\t^k\o)e^{-\frac12\d(\o)+\e(\o)}}{1-e^{-\frac12\d(\o)+\e(\o)}}\right)^{-1}.
\end{align*}
Hence, by (\ref{E:Zeta3}) again, one has
  $$|w-u|\ge\frac1{2(\|\pi_E(\t^n\o)\|+1)}\left(1+\frac{K'(\t^k\o)e^{-\frac12\d(\o)+\e(\o)}}{1-e^{-\frac12\d(\o)+\e(\o)}}\right)^{-1}.$$
So, we have that for any $n\ge N(\o)$,
$$|w-u|\ge\frac1{\|\pi_E(\t^n\o)\|+1}\min\left\{\frac14,
\frac12\left(1+\frac{K'(\t^n\o)e^{-\frac12\d(\o)+\e(\o)}}{1-e^{-\frac12\d(\o)+\e(\o)}}\right)^{-1}\right\},$$
which implies C3') by the temperedness of $\|\pi_{E}\|$ and $K'$ and arbitrariness of $w,u$. Thus, we have completed the proof of Theorem \ref{T:SplitImpliesConeW}.
\end{proof}
\vskip 2mm

\begin{proof}[Proof of Theorem \ref{T:SplitImpliesCone1d}]
To prove Theorem \ref{T:SplitImpliesCone1d}, we need to prove that C3) is satisfied.

We again use the cones $C(\o)$ defined in (\ref{E:WeakCone}). We will show that $\zeta_{\o}$ is Lipchitz continuous on a neighborhood of $C(\o)\cap S$ with the Lipchitz contant function (depending on $\o$) being tempered. In fact, for any $w\in C(\o)\cap S$ (hence, $|w^{E}(\o)|\ge\frac 12$) and any $u\in S$ with $|u-w|<\frac1{4\|\pi_{E}(\o)\|}$ (hence, $|u^{E}(\o)|\ge\frac 14$)),
\begin{align*}
&|\zeta_{\o}(w)-\zeta_{\o}(u)|\\
\le&\sum_{n=0}^{\infty}\left|\frac{|T^{n}(\o)w^{F}(\o)|}{|T^{n}(\o)w^{E}(\o)|}-\frac{|T^{n}(\o)u^{F}(\o)|}{|T^{n}(\o)u^{E}(\o)|}\right|\\
\le&\sum_{n=0}^{\infty}\left(\left|\frac{|T^{n}(\o)w^{F}(\o)-T^{n}(\o)u^{F}(\o)|}
{|T^{n}(\o)w^{E}(\o)|}\right|+\frac{|T^{n}(\o)u^{F}(\o)|}
{|T^{n}(\o)w^{E}(\o)|}\left|1-\frac{|T^{n}(\o)w^{E}(\o)|}{|T^{n}(\o)u^{E}(\o)|}\right|\right).
\end{align*}
Since $\dim E(\o)=1$, one has
$$\left|1-\frac{|T^{n}(\o)w^{E}(\o)|}{|T^{n}(\o)u^{E}(\o)|}\right|
\le \frac{|T^{n}(\o)(u^{E}(\o)-w^{E}(\o))|}{|T^{n}(\o)u^{E}(\o)|}= \frac{|(u^{E}(\o)-w^{E}(\o))|}{|u^{E}(\o)|}.$$
Consequently, together with D3), we obtain
\begin{align}
&|\zeta_{\o}(w)-\zeta_{\o}(u)| \notag\\
\le &\sum_{n=0}^{\infty}\left(2K(\o)e^{-n\d(\o)}|w^{F}(\o)-u^{F}(\o)|+2K(\o)\|\pi_{F}(\o)\|e^{-n\d(\o)}\left|1-\frac{w^{E}(\o)}{u^{E}(\o)}\right|\right)\notag\\
\le&\frac{8K(\o)\|\pi_{F}(\o)\|}{1-e^{-\d(\o)}}\left(|w^{F}(\o)-u^{F}(\o)|+|w^{E}(\o)-u^{E}(\o)|\right)\notag\\
\le& \frac{16K(\o)(\|\pi_E(\o)\|+1)^{2}}{1-e^{-\d(\o)}}|w-u|,\quad \,\,\,
\text{ whenever }|u-w|<\frac1{4\|\pi_{E}(\o)\|}. \label{A:zeta-Lipsch}
\end{align}

Therefore, by \eqref{A:zeta-Lipsch} and Lemma \ref{L:IndexPro}, we obtain that, for any $w\in T(\o)C(\o)\cap S$ and $u\in (X\setminus C(\o))\cap S$,
$$|w-u|\ge\min\left\{\frac1{4\|\pi_{E}(\t\o)\|},\ \frac{(1-e^{-\frac12\d(\o)})(1-e^{-\d(\o)})}{16K(\t\o)(\|\pi_E(\t\o)\|+1)^{2}}\right\}.$$ Thus,
we have completed the proof of Theorem \ref{T:SplitImpliesCone1d} by letting
$$\chi(\o)=\max\left\{8\|\pi_{E}(\t\o)\|,\ \frac{32K(\t\o)(\|\pi_E(\t\o)\|+1)^{2}}{(1-e^{-\frac12\d(\o)})(1-e^{-\d(\o)})}\right\}$$
which is obviously tempered.
\end{proof}

 \section{Proof of Theorem \ref{T:ConeImpliesSplitW}}\label{S:TheoremECone}
In this section, we will prove Theorem \ref{T:ConeImpliesSplitW} by applying Theorem \ref{T:ConeImpliesSplit} to a modified system.

\begin{proof}[Proof of Theorem \ref{T:ConeImpliesSplitW}]
For each $n>m\ge 1$, let
$$\O_{m,n}=\left\{\o\in\O\big|\ m\le N(\o)\le n\right\}.$$
By virtue of C3'), $\O_{m,n}$'s are measurable sets and $\lim_{n\to\infty}\mb P(\O_{1,n})=1$. Thus for any given $\e\in(0,1)$, there exists $n>m\ge 1$ such that $\mb P(\O_{m,n})>1-\e$. By Poincar\'{e} Recurrence Theorem, for $\mb P$-a.e. $\o\in\O_{m,n}$, the orbit staring at $\o$ meets $\O_{m,n}$ infinite times. For such $\o$, we define the first-return function
$$\tau_{m,n}(\o)=\min\{j|\ \t^{j}\o\in \O_{m,n}, \ j\ge 1\}.$$
It is clearly that $\tau_{m,n}$ is measurable.

The pair $(\O_{m,n},\tau_{m,n})$ induces a new system $(\O_{m,n},\mc F_{m,n}, \mb P_{m,n},\t_{m,n},T_{m,n},X)$  as follows:
\begin{align*}
&\mc F_{m,n}=\{U\cap \O_{m,n}|\ U\in \mc F\};\ \mb P_{m,n}=\frac{\mb P}{\mb P(\O_{m,n})};\ \t_{m,n}(\cdot)=\t^{\tau_{m,n}(\cdot)}(\cdot);\\
&A_{m,n}(\cdot)=T^{\tau_{m,n}(\cdot)}(\cdot), \,\, T^j_{m,n}(\cdot)=A_{m,n}(\t_{m,n}^{j-1}(\cdot))\circ\cdots\circ A_{m,n}(\cdot),
\end{align*}
which is the restriction of  $(\O,\mc F, \mb P,\t,T,X)$ on $\O_{m,n}$.

 It is clear that $(\O_{m,n},\mc F_{m,n}, \mb P_{m,n},\t_{m,n})$ is measure preserving system, and is ergodic provided that  $(\O,\mc F, \mb P,\t)$ is ergodic. We will apply Theorem \ref{T:ConeImpliesSplit}  to the system $(\O_{m,n},\mc F_{m,n}, \mb P_{m,n},\t_{m,n}^{n},T^{n}_{m,n},X)$. However, before doing  that, we need to check  that $(\O_{m,n},\mc F_{m,n}, \mb P_{m,n},\t_{m,n}^{n},T^{n}_{m,n},X)$ satisfies C1)-C3). It is obvious that C1) and C2) are automatically satisfied. As for C3), it follows from C3') and the definition of $\O_{m,n}$ that, for any $\mb P_{m,n}$-a.e. $\o\in \O_{m,n}$,
\begin{equation*}\label{E:N-bdd-by-n}
\udi(T^{n}_{m,n}(\o)C(\o),X\setminus C(\t_{m,n}^{n}\o))\ge\frac1{\chi(\t_{m,n}^{n}\o)}.
\end{equation*}
Denote that $\chi'(\o)=\chi(\t_{m,n}^{n}\o)\ge 1$ for $\o\in \O_{m,n}$.
It remains to show that
\begin{equation}\label{E:N-bdd-by-n-tempered}
\lim_{j\to\pm\infty}\frac{1}{|j|}\log \chi'(\t_{m,n}^{jn}\o)=0.
\end{equation}
To this end, we first note that,  by the Kac's Recurrence Theorem,
$$\int_{\O_{m,n}}\tau_{m,n} d\mb P_{m,n}\le\frac1{\mb P(\O_{m,n})}$$ and hence,
the Birkhoff Ergodic Theorem implies that there exists a $\t_{m,n}$-invariant function $\overline\tau_{m,n}:\O_{m,n}\to [1,\infty)$ such that
\begin{equation}\label{E:ReturnFrequency}
\lim_{j\to\pm\infty}\frac1{|j|}\sum_{i=0}^{j-1}\tau_{m,n}(\t_{m,n}^{i}\o)=\overline\tau_{m,n}(\o), \ \mb P-a.e.\,\, \o
\end{equation}
and
$$\int_{\O_{m,n}}\overline\tau_{m,n} d\mb P_{m,n}=\int_{\O_{m,n}}\tau_{m,n} d\mb P_{m,n}\le\frac1{\mb P(\O_{m,n})}.$$
As a consequence, for $\mb P_{m,n}$-a.e. $\o\in\O_{m,n}$, we have
\begin{align*}
0&\le\lim_{j\to\pm\infty}\frac1{|j|}\log \chi'(\t_{m,n}^{jn}\o)\\
&=\lim_{j\to\pm\infty}\frac1{|j|}\log\chi(\t_{m,n}^{(j+1)n}\o)\\
&=\lim_{j\to\pm\infty}\frac{\sum_{i=0}^{{\rm sgn}(j)(|j|+1)n}\tau_{m,n}(\t_{m,n}^{i}\o)}{|j|}\cdot\frac1{\sum_{i=0}^{{\rm sgn}(j)(|j|+1)n}\tau_{m,n}(\t_{m,n}^{i}\o)}\log\chi(\t_{m,n}^{(j+1)n}\o)\\
&=\lim_{j\to\pm\infty}\frac{\sum_{i=0}^{{\rm sgn}(j)(|j|+1)n}\tau_{m,n}(\t_{m,n}^{i}\o)}{|j|}\\
&\quad\cdot\frac1{\sum_{i=0}^{{\rm sgn}(j)(|j|+1)n}\tau_{m,n}(\t_{m,n}^{i}\o)}
\log\chi\left(\t^{\sum_{i=0}^{(j+1)n}\tau_{m,n}(\t_{m,n}^i\o)}\o\right)\\
&\le n\overline\tau_{m,n}(\o)\cdot\lim_{j\to\pm\infty}\frac1{|j|}\log\chi(\t^{j}\o)=0.
\end{align*}
Thus, we have proved \eqref{E:N-bdd-by-n-tempered}, which entails that Theorem \ref{T:ConeImpliesSplit} is applicable to system $(\O_{m,n},\mc F_{m,n}, \mb P_{m,n},\t_{m,n}^{n},T^{n}_{m,n},X)$. So, there exists some $\t_{m,n}^{n}$-invariant set $\tilde \O_{m,n}\subset\O$ with $\mb P(\tilde \O_{m,n})=\mb P(\O_{m,n})$, on which there exist a measurable splitting $X=E_{m,n}(\cdot)\oplus F_{m,n}(\cdot)$, two $\t_{m,n}^{n}$-invariant measurable functions $k_{m,n}:\O_{m,n}\to\mb N$ and $\d_{m,n}:\O_{m,n}\to(0,\infty)$, as well as a measurable function $K_{m,n}:\O_{m,n}\to[1,\infty)$ such that D1)-D3) are satisfied for system $(\O_{m,n},\mc F_{m,n}, \mb P_{m,n},\t_{m,n}^{n},T^{n}_{m,n},X)$ by which we denote by the notations D1)$_{m,n}$--D3)$_{m,n}$.

Now choose a sequence or a finite set of positive integers $\{m_{p}\}_{ 0\le p< q}$ with $q\in\mb N\cup \{\infty\}$ such that $1=m_{0}<m_{1}<\cdots<m_{q}$, and for each $p\in[0,q)$, $\mb P(\O_{m_{p},m_{p+1}})>0$ and $\mb P\left(\cup_{p=0}^{q}\O_{m_{p},m_{p+1}}\right)=1$. Let
$$\tilde \O=\bigcap_{i=-\infty}^{+\infty}\t^{i}\left(\bigcup_{p=0}^{q}\tilde\O_{m_{p},m_{p+1}}\right),$$
which is a $\t$-invariant $\mb P$-full measure set, and on which the splitting $X=E(\cdot)\oplus F(\cdot)$ and $\d,K$ are well defined by taking $E(\o)=E_{m_{p},m_{p+1}}(\o)$, $F(\o)=F_{m_{p},m_{p+1}}(\o)$, $\d(\o)=\d_{m_{p},m_{p+1}}(\o)$ and $K(\o)=K_{m_{p},m_{p+1}}(\o)$ when $\o\in\tilde\O_{m_{p},m_{p+1}}$ .

We {\it claim that such splitting $X=E(\cdot)\oplus F(\cdot)$ independent on the choice of $\{m_{p}\}_{0\le p\le q}$ and is $\t$-invariant.}  The proof of the claim will be postponed to the following two lemmas (Lemmas \ref{L:TInvariantSplit1} and \ref{L:TInvariantSplit2}). If the claim is confirmed, then the invariant splitting $X=E(\cdot)\oplus F(\cdot)$ is what we needed. It also satisfies  D1),D2) and D3'). In fact, for such splitting,
 D1)-D2) directly from D1)$_{m_p,m_{p+1}}$--D2)$_{m_p,m_{p+1}}$; while
 D3) follows from D3)$_{m_p,m_{p+1}}$ and (\ref{E:ReturnFrequency}) by taking
$n_{j}=\sum_{i=0}^{j-1}\tau_{m_{p},m_{p+1}}(\t^{i}_{m_{p},m_{p+1}}\o)$ for $\o\in\tilde \O_{m_{p},m_{p+1}}$ and $j\ge1$. Moreover,
$$\lim_{j\to\infty}\frac{j}{n_{j}(\o)}=\frac1{\overline\tau_{m_{p},m_{p+1}}(\o)}>0,\ \text{ for all }\o\in \tilde \O_{m_{p},m_{p+1}}(\o).$$
Thus, we have completed the proof of Theorem \ref{T:ConeImpliesSplitW}.
\end{proof}
\vskip 2mm

\begin{lemma}\label{L:TInvariantSplit1}
Given any $0\le m_{1}\le m_{2}\le n_{2}\le n_{1}<\infty$. If $\mb P(\O_{m_{2},n_{2}})>0$, then
$$E_{m_{2},n_{2}}(\o)=E_{m_{1},n_{1}}(\o),\,\, F_{m_{2},n_{2}}(\o)=F_{m_{1},n_{1}}(\o)\,\text{ for } \mb P-a.e.\ \o\in\tilde \O_{m_{2},n_{2}}.$$
\end{lemma}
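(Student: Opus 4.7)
\textbf{Proof proposal for Lemma \ref{L:TInvariantSplit1}.} The plan is to compare the two splittings separately for the $E$- and $F$-parts, exploiting the containment $\O_{m_2,n_2}\subset \O_{m_1,n_1}$. Fix $\o$ in a full-measure subset of $\tilde\O_{m_2,n_2}\cap\tilde\O_{m_1,n_1}$ on which Poincar\'{e} recurrence yields infinitely many forward and backward returns to each $\O_{m_i,n_i}$. Let $\{r_q\}_{q\ge 0}$ and $\{r'_q\}_{q\ge 0}$ denote the forward and backward $\O_{m_1,n_1}$-return times of $\o$, and $\{s_p\}_{p\ge 0}$ and $\{s'_p\}_{p\ge 0}$ the corresponding $\O_{m_2,n_2}$-return times. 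By the containment, $\{s_p\}$ (resp.\ $\{s'_p\}$) is a subsequence of $\{r_q\}$ (resp.\ $\{r'_q\}$); write $s_p=r_{q_p}$ with $q_p\to\infty$. The cone family $\{C(\o)\}$, the subspace family $E_0(\o)\in \G_{k(\o)}(C(\o))$ from C2), and the rank $k(\o)$ are common to both induced systems.

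To obtain $E_{m_2,n_2}(\o)=E_{m_1,n_1}(\o)$, apply Proposition \ref{P:InvariantSubspace1}(ii) to each induced system with starting family $E_0$. Unwinding the induced cocycles, $T_{m_1,n_1}^{i}(\t_{m_1,n_1}^{-i}\o)=T^{r'_i}(\t^{-r'_i}\o)$ and $T_{m_2,n_2}^{i}(\t_{m_2,n_2}^{-i}\o)=T^{s'_i}(\t^{-s'_i}\o)$, so the proposition yields
\[d\bigl(T^{r'_q}(\t^{-r'_q}\o)E_0(\t^{-r'_q}\o),\, E_{m_1,n_1}(\o)\bigr)\to 0 \text{ as }q\to\infty,\]
\[d\bigl(T^{s'_p}(\t^{-s'_p}\o)E_0(\t^{-s'_p}\o),\, E_{m_2,n_2}(\o)\bigr)\to 0 \text{ as }p\to\infty.\]
Since the second sequence is a subsequence of the first, both limits must agree in the gap metric, giving $E_{m_2,n_2}(\o)=E_{m_1,n_1}(\o)$.

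Next, to show $F_{m_2,n_2}(\o)=F_{m_1,n_1}(\o)$, take any $v\in F_{m_2,n_2}(\o)\setminus\{0\}$ and decompose $v=u_1+v_1$ via $X=E_{m_1,n_1}(\o)\oplus F_{m_1,n_1}(\o)$; we argue $u_1=0$ by contradiction. If $u_1\neq 0$, Remark \ref{R:UniformSeparation} for the $(m_1,n_1)$-system gives $|T^{r_q}(\o)v_1|/|T^{r_q}(\o)u_1|\to 0$ exponentially in $q$, hence $|T^{r_q}(\o)u_1|/|T^{r_q}(\o)v|\to 1$; restricting to the subsequence $s_p=r_{q_p}$ preserves this limit. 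On the other hand, the previous paragraph places $u_1\in E_{m_1,n_1}(\o)=E_{m_2,n_2}(\o)$, and since $v\in F_{m_2,n_2}(\o)$, Remark \ref{R:UniformSeparation} for the $(m_2,n_2)$-system yields $\liminf_{p\to\infty}\frac{1}{p}\log\frac{|T^{s_p}(\o)u_1|}{|T^{s_p}(\o)v|}\ge \d_2(\o)>0$, forcing the same ratio to infinity in $p$. This contradiction gives $u_1=0$, i.e.\ $v\in F_{m_1,n_1}(\o)$, so $F_{m_2,n_2}(\o)\subset F_{m_1,n_1}(\o)$; equal codimension $k(\o)$ then upgrades this to equality.

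The main subtlety is whether the asymptotic relations of the $(m_1,n_1)$-system survive restriction to the (possibly sparse) subsequence of $\O_{m_2,n_2}$-returns. For the $E$-step this is automatic, since gap-metric convergence along the full sequence passes to any subsequence. For the $F$-step the only input needed is $p\to\infty$ (a consequence of $s_p\to\infty$), so the strict positivity of $\d_2(\o)$ alone produces the contradiction; no density information from Kac's theorem is required here.
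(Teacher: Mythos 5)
There is a genuine gap, and it sits exactly at the point you flagged as ``the main subtlety.'' The root cause is that the splittings $E_{m,n},F_{m,n}$ are produced by applying Theorem \ref{T:ConeImpliesSplit} not to the first-return system $T_{m,n}$ but to its $n$-th power $(\O_{m,n},\mc F_{m,n},\mb P_{m,n},\t_{m,n}^{n},T^{n}_{m,n},X)$: the first-return system need not satisfy C3) (the contraction in C3') is only guaranteed after $N(\o)$ steps, and $N(\o)\le n$ on $\O_{m,n}$), so Proposition \ref{P:InvariantSubspace1} is not available for it. Hence your unwinding $T_{m_1,n_1}^{i}(\t_{m_1,n_1}^{-i}\o)=T^{r'_i}(\t^{-r'_i}\o)$ must be replaced by $T_{m,n}^{in}(\t_{m,n}^{-in}\o)=T^{t}(\t^{-t}\o)$ with $t$ the $(in)$-th backward return time. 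The two sequences of subspaces you compare are therefore indexed by every $n_1$-th backward visit to $\O_{m_1,n_1}$ and every $n_2$-th backward visit to $\O_{m_2,n_2}$. While every visit to $\O_{m_2,n_2}$ is indeed a visit to $\O_{m_1,n_1}$, there is no reason the $(pn_2)$-th visit to the smaller set should be a multiple-of-$n_1$ visit to the larger one; the two relevant index sets can have finite or empty intersection, so the second sequence is not a subsequence of the first and the $E$-identity does not follow. The same defect undermines the $F$-step: you get $|T^{t}(\o)u_1|/|T^{t}(\o)v|\to 1$ along one time sequence and $\to\infty$ along a different, generally disjoint, time sequence; since Theorem B assumes no integrability of $\log^+\|A\|$, nothing controls the ratio between returns, and two different limits along two disjoint sequences is not a contradiction.

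Both gaps are repairable, and the paper's proof shows the device needed: bridge the two time scales with objects invariant at \emph{every} time step. For $E$: since $E_{m_1,n_1}(\o)=T^{s_{j}}(\t^{-s_{j}}\o)E_{m_1,n_1}(\t^{-s_{j}}\o)$ with $E_{m_1,n_1}(\t^{-s_j}\o)\subset C(\t^{-s_j}\o)$, where $s_j$ runs over the $(m_2,n_2)$-power return times, one applies Proposition \ref{P:InvariantSubspace1} for the $(m_2,n_2)$-power system to the fixed family $E_{m_1,n_1}(\cdot)$ and finds that $d(E_{m_1,n_1}(\o),E_{m_2,n_2}(\o))$ is bounded by a quantity tending to $0$ in $j$ while being independent of $j$. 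For $F$: if $v$ has a nonzero $E_{m_2,n_2}$-component, the domination D3)$_{m_2,n_2}$ forces $T^{t}(\o)v$ into $C(\t^t\o)$ at one sufficiently large time, and then C1) (which holds at every single step) keeps it in the cone at all later times, in particular at all $(m_1,n_1)$-power return times; Lemma \ref{L:ContractionOnCone2'} then drives its direction toward $E_{m_1,n_1}$, contradicting the separation of $F_{m_1,n_1}$ from the cone. Some such mechanism for transferring information between the two induced systems is indispensable; the bare subsequence/limit comparison does not supply it.
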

\begin{proof}
Note that for any $ j_{2}\ge 1$, there exists $j_{1}\ge 1$ such that for $\mb P$-a.e. $\o\in\tilde \O_{m_{2},n_{2}}$
$$s_{j_{1}}(\o):=\sum_{i=0}^{j_{1}}\tau_{m_{1},n_{1}}(\t_{m_{1},n_{1}}^{-in_{1}}\o)\ge s_{j_{2}}(\o):= \sum_{i=0}^{j_{2}}\tau_{m_{2},n_{2}}(\t_{m_{2},n_{2}}^{-in_{2}}\o).$$
By C1), we have that $$T^{s_{j_{1}}(\o)-s_{j_{2}}(\o)}(\t^{-s_{j_{1}}(\o)}\o)E_{m_{1},n_{1}}(\t^{-s_{j_{1}}(\o)}\o)\subset C(\t^{-s_{j_{2}}(\o)}\o),$$
which implies that
 $$E_{m_{1},n_{1}}(\o)\subset T^{s_{j_{2}}(\o)}C(\t^{-s_{j_{2}}(\o)}\o)\subset C(\o).$$
Applying D3)$_{m_2,n_2}$ to the system $(\O_{m_{2},n_{2}},\mc F_{m_{2},n_{2}}, \mb P_{m_{2},n_{2}},\t_{m_{2},n_{2}}^{n_{2}},T^{n_{2}}_{m_{2},n_{2}},X)$, we have that  $E_{m_{1},n_{1}}(\o)$ converges to $E_{m_{2},n_{2}}(\o)$ (by Proposition \ref{P:InvariantSubspace1} for system $T^{n_{2}}_{m_{2},n_{2}}$) exponentially fast as $j_{2}\to\infty$. Consequently, they are equal to each other.

For the second equality, suppose there is some $\o\in \tilde\O_{m_2,n_2}$ and a vector $v\in F_{m_{1},n_{1}}(\o)\setminus\{0\}$ but $v\notin F_{m_{2},n_{2}}(\o)$.  Then we write $v=w+u$ with $w\in E_{m_{2},n_{2}}(\o)\setminus\{0\}$ and $u\in F_{m_{2},n_{2}}(\o)$. By property D3)$_{m_{2},n_{2}}$, we have that $$T^{in_2}_{m_{2},n_{2}}(\o)v\in C(\t^{in_2}_{m_{2},n_{2}}\o)$$ for some large $i$. So, by C1), one has $T^l(T^{in_2}_{m_{2},n_{2}}(\o)v)\subset C(\t^l\cdot\t^{in_2}_{m_{2},n_{2}}\o)$ for all $l\ge 0$, which implies that  $T^{l}(\o)v\in C(\t^{l}\o)$ for all $l$ sufficient large. In particular, we have
$$T^{in_{1}}_{m_{1},n_{1}}(\o)v\in C(\t_{m_{1},n_{1}}^{in_{1}}\o),$$ for all $i$ sufficient large.
 So, by applying Lemma \ref{L:ContractionOnCone2'} to $(\O_{m_{1},n_{1}},\mc F_{m_{1},n_{1}}, \mb P_{m_{1},n_{1}},\t_{m_{1},n_{1}}^{n_{1}},T^{n_{1}}_{m_{1},n_{1}},X)$, we obtain a contradiction to
 D2)$_{m_1,n_1}$. Thus, the proof is done.
\end{proof}

\begin{lemma}\label{L:TInvariantSplit2}
For all $\o\in\tilde \O$,
$$A(\o)E(\o)=E(\t\o),\ A(\o)F(\o)\subset F(\t\o).$$
\end{lemma}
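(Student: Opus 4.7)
The plan is to use Lemma \ref{L:TInvariantSplit1} to reduce both claims to a single induced sub-cocycle in which $A(\o)$ appears as exactly one step, and then to bootstrap the $\tilde n$-iterate invariance produced by Theorem \ref{T:ConeImpliesSplit} to single-step invariance. If $\o\in\tilde\O_{m_p,m_{p+1}}$ and $\t\o\in\tilde\O_{m_{p'},m_{p'+1}}$, I choose $\tilde m\le\min\{m_p,m_{p'}\}$ and $\tilde n\ge\max\{m_{p+1},m_{p'+1}\}$ so that $\o,\t\o\in\O_{\tilde m,\tilde n}$. After passing to a $\t$-invariant full-$\mb P$-measure subset, Lemma \ref{L:TInvariantSplit1} identifies $E(\o)=E_{\tilde m,\tilde n}(\o)$, $F(\o)=F_{\tilde m,\tilde n}(\o)$, and likewise at $\t\o$. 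Since $\t\o\in\tilde\O_{\tilde m,\tilde n}$, the first-return time satisfies $\tau_{\tilde m,\tilde n}(\o)=1$, so $\t_{\tilde m,\tilde n}\o=\t\o$ and $T_{\tilde m,\tilde n}(\o)=A(\o)$; both assertions then reduce to single-step invariance for the induced cocycle $T_{\tilde m,\tilde n}$.

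For the $E$-part, I would combine the two identities
$$T^{\tilde n+1}_{\tilde m,\tilde n}(\o)=T^{\tilde n}_{\tilde m,\tilde n}(\t_{\tilde m,\tilde n}\o)\circ T_{\tilde m,\tilde n}(\o)=T_{\tilde m,\tilde n}(\t^{\tilde n}_{\tilde m,\tilde n}\o)\circ T^{\tilde n}_{\tilde m,\tilde n}(\o)$$
with the $\tilde n$-iterate invariance of $E$ to obtain
$$T^{\tilde n}_{\tilde m,\tilde n}(\t_{\tilde m,\tilde n}\o)\bigl[T_{\tilde m,\tilde n}(\o)E_{\tilde m,\tilde n}(\o)\bigr]=T_{\tilde m,\tilde n}(\t^{\tilde n}_{\tilde m,\tilde n}\o)E_{\tilde m,\tilde n}(\t^{\tilde n}_{\tilde m,\tilde n}\o).$$
Setting $\Delta(\o):=d(T_{\tilde m,\tilde n}(\o)E_{\tilde m,\tilde n}(\o),E_{\tilde m,\tilde n}(\t_{\tilde m,\tilde n}\o))$ and applying the cone contraction of $T^{\tilde n}_{\tilde m,\tilde n}$ on $\G_k(C(\cdot))$ from Lemma \ref{L:ContractionOnCone2} (in the induced system), I expect to get $\Delta(\t^{\tilde n}_{\tilde m,\tilde n}\o)\le C(\o)\Delta(\o)$ with a measurable $C$ satisfying $\int\log C\,d\mb P_{\tilde m,\tilde n}<0$. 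Iterating this inequality backward along the orbit, combined with the uniform bound $\Delta\le 2$ and the Birkhoff ergodic theorem, forces $\Delta\equiv 0$ $\mb P$-a.s., which gives $A(\o)E(\o)=E(\t\o)$.

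For the $F$-part, once $E$-invariance is in hand I would argue by contradiction: suppose $A(\o)v=e+f$ with $v\in F(\o)\setminus\{0\}$, $e\in E(\t\o)\setminus\{0\}$ and $f\in F(\t\o)$. Iterating by $T^{j\tilde n}_{\tilde m,\tilde n}(\t\o)$ and invoking D3) of the induced system at $\t\o$ yields that the ratio $|T^{j\tilde n}_{\tilde m,\tilde n}(\t\o)f|/|T^{j\tilde n}_{\tilde m,\tilde n}(\t\o)e|$ decays exponentially. Since $T^{j\tilde n}_{\tilde m,\tilde n}(\t\o)e$ lies in $E(\t^{j\tilde n+1}_{\tilde m,\tilde n}\o)\subset\tint C(\t^{j\tilde n+1}_{\tilde m,\tilde n}\o)$ with a tempered separation from $X\setminus C$, for all sufficiently large $j$ the sum
$$T^{j\tilde n+1}_{\tilde m,\tilde n}(\o)v=T^{j\tilde n}_{\tilde m,\tilde n}(\t\o)(e+f)$$
lies in $C(\t^{j\tilde n+1}_{\tilde m,\tilde n}\o)$. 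Property C1) propagates this to $T^{(j+1)\tilde n}_{\tilde m,\tilde n}(\o)v\in C(\t^{(j+1)\tilde n}_{\tilde m,\tilde n}\o)$, while the $\tilde n$-iterate invariance of $F$ at $\o$ gives $T^{(j+1)\tilde n}_{\tilde m,\tilde n}(\o)v\in F(\t^{(j+1)\tilde n}_{\tilde m,\tilde n}\o)$. The property $F\cap C=\{0\}$ and injectivity of $T$ then force $v=0$, a contradiction, so $A(\o)F(\o)\subset F(\t\o)$.

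The hard part will be the quantitative step inside the $F$-argument: I need to verify that $\udi(E(\cdot),X\setminus C(\cdot))$ is tempered along the orbit (or at worst decays at a rate strictly less than $\d$), so that the sum $e+f$ enters $C$ after finitely many iterates of $T^{\tilde n}_{\tilde m,\tilde n}$ and not merely asymptotically. This temperedness should be inherited from the construction of $E$ as the attractor of cone iterates in Proposition \ref{P:InvariantSubspace1} together with C3)--C4), but it is the only place where a tempered quantity has to be balanced against the exponential rate coming from D3), and is therefore where I expect the bookkeeping to be delicate.
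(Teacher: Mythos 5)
Your proposal follows essentially the same route as the paper's (deliberately terse) proof: reduce the one-step statement to the induced first-return systems, use Lemma \ref{L:TInvariantSplit1} to identify $E(\cdot),F(\cdot)$ with $E_{\tilde m,\tilde n}(\cdot),F_{\tilde m,\tilde n}(\cdot)$ at both $\o$ and $\t\o$, and then upgrade invariance under $T^{\tilde n}_{\tilde m,\tilde n}$ to invariance under the single step $A(\o)$ via the cone contraction (for $E$) and cone absorption (for $F$). Your packaging differs in two harmless ways: you enlarge the window so that $\o$ and $\t\o$ lie in one and the same $\O_{\tilde m,\tilde n}$, forcing $\tau_{\tilde m,\tilde n}(\o)=1$, whereas the paper uses $N(\o)\le N(\t\o)+1$ and shifts the return times $s_{j_2}$ by one; and you close the $F$-part with $F\cap C=\{0\}$ plus injectivity, whereas the paper's template (the end of Lemma \ref{L:TInvariantSplit1}) invokes Lemma \ref{L:ContractionOnCone2'} against D2). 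Both variants are legitimate.

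One step as written needs repair. In the $E$-part you assert a one-step inequality $\Delta(\t^{\tilde n}_{\tilde m,\tilde n}\o)\le C(\o)\Delta(\o)$ for the \emph{gap metric} with $\int\log C\,d\mb P_{\tilde m,\tilde n}<0$. Any such $C$ must contain the constants of Lemma \ref{L:ProjectiveMetric} II)--III) converting between $d$ and $\a_C$; these are only tempered (polynomials in $\chi$), and a tempered function need not have integrable logarithm, so neither the finiteness nor the sign of $\int\log C$ is available, and the backward product $\prod_{i}C(\t^{-i\tilde n}_{\tilde m,\tilde n}\o)$ accumulates one such non-log-integrable factor per step. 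The correct bookkeeping --- exactly the device of Lemmas \ref{L:ContractionOnCone2} and \ref{L:ContractionOnCone3} --- is to iterate the quantity $\a_{C(\cdot)}(\cdot,\cdot)-1$, whose per-step contraction factor $\tau_S\in[\tfrac{44}{45},1)$ is log-integrable, and to pay the tempered conversion constants only once, at the two endpoints of the orbit segment; your backward iteration then yields $\a_{C(\t\o)}(A(\o)E(\o),E(\t\o))\le 1$ and hence equality by Lemma \ref{L:ProjectiveMetric} I). The ``hard part'' you flag in the $F$-argument is the right thing to worry about, but it is already supplied by C3)--C4) for the induced system: $\udi(E(\cdot),X\setminus C(\cdot))$ is bounded below by the reciprocal of a tempered function, which loses to the exponential rate from D3), so the absorption into the cone does occur after finitely many returns.
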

\begin{proof}
The proof of Lemma \ref{L:TInvariantSplit2} is employing the exact same idea used in the proof of Lemma \ref{L:TInvariantSplit1}. Here we just use the fact that $N(\o)\le N(\t\o)+1$ which follows from C1) and the definition of $N(\cdot)$ in C3'). Thus for any $\o\in \tilde \O_{m,n}$, $\t\o\in \tilde \O_{m-1,\max\{n,N(\t\o)\}}$. For the first equality, we can use the exact same proof as proving Lemma \ref{L:TInvariantSplit1} by taking $m_{1}=m-1,m_{2}=m, ,n_{2}=n, n_{1}=\max\{n,N(\t\o)\}$, and replacing $s_{j_{2}}(\o)$ by $s_{j_{2}}(\t\o)-1$. \\
For the second equality, the proof can be done by doing a similar modification on the proof of Lemma \ref{L:TInvariantSplit1} too. We leave the detailed proof to readers.
\end{proof}

\bigskip

 \section{Proof of Theorems A and C}\label{S:Invariant subspace and exponential separation-2}

 In this section, we will first prove Theorem A in Subsection \ref{S:TheoremA}. Then we will prove Theorem \ref{T:SplitImpliesConeL1} and Theorem \ref{T:Cone&SplitMET} in Subsection \ref{S:Theorem4}. Clearly, Theorem C is just a part of Theorem \ref{T:Cone&SplitMET}.

\subsection{Proof of Theorem A}\label{S:TheoremA}
We first state the Multiplicative Ergodic Theorem from \cite{LL}, which fits the setting of the system, as well as the assumptions, in Theorem A:
\begin{theorem}\label{T:MET}(Multiplicative Ergodic Theorem)
Assume that $A(\cdot):\O\to L(X, X)$ is strongly measurable, $A(\o)$
is injective almost everywhere and
\[ \log^+||A(\cdot)|| \in L^1(\Omega, \mathcal{F}, P),
\]
Then there exists a $\theta$-invariant subset $\tilde
\Omega\subset\Omega$ of full measure such that for each $\omega \in
\tilde \Omega$ only one of the following conditions holds

\begin{itemize}
\item[(I)]$\l_0(\o)=\k(\o)$.

\vskip0.1in \item[(II)] There exist $k(\omega)$ numbers
$\l_0(\o)=\l_1(\o)>\ldots>\l_{k(\o)}(\o)>\l_{k(\o)+1}=\k(\o)$  and a splitting
     \[{X}=E_1(\o)\oplus\cdots\oplus E_{k({\omega})}(\o)\oplus F(\o)\]
of finite dimensional linear subspaces $E_j(\omega)$ and infinite
dimensional linear subspace $F(\o)$ such that
     \begin{itemize}
     \item[1)] Invariance: $k(\theta \omega)=k(\o)$,
     $\l_i(\theta\o)=\l_i(\o)$, $A(\omega) E_j(\o)=E_j(\theta \omega)$ and
     $A(\o)F(\o)\subset F(\theta\o)$;
     \item[2)] Lyapunov Exponents: \[\lim_{n\to\pm\infty}\frac1n\log\|T^n(\omega)v\|
     =\l_j(\o)\text{ for all }v\in E_j(\o)\setminus \{0\},1\le j\le k;\]
     \item[3)] Exponential Decay Rate on $F(\omega)$:
\[\limsup_{n\to+\infty}\frac1n\log\|T^n(\omega)|_{F(\o)}\|\le
     \k(\o)\] and $\text{if }v\in F(\o)\setminus\{0\}\text{ and
     }(T^n(\theta^{-n}\omega))^{-1} v\text{ exists for all }n\ge0$, which
is denoted by $T^{-n}(\omega)v$,
     then
     \[\liminf_{n\to+\infty}\frac1n\log\|T^{-n}(\omega)v\|\ge-\k(\o);\]

     \item[4)] Tempered Projections: The projection operators associated with the decompositions
     \[X=\Big(\bigoplus_{i=1}^j
E_i(\o)\Big)\oplus\Big(\big(\bigoplus_{i=j+1}^{k(\o)}E_i(\o)\big)\oplus
F(\o)\Big)=\Big(\bigoplus_{i=1}^{k(\o)}E_i(\o)\Big)\oplus F(\o)\]
are tempered;
\item[5)] Measurability:  $k(\o), \l_i(\o)$,
 and $E_j(\o)$ are measurable and the projection operators  are strongly measurable.
     \end{itemize}

     \vskip0.1in
\item[(III)] There exist infinitely many finite dimensional
subspaces $E_j(\o)$ and infinite dimensional subspaces $F_j(\o)$,
and infinitely many numbers

\[\l_0(\o)=\l_1(\o)>\l_2(\o)>\ldots>\k(\o) \; \text{with }\; \lim_{j\to+\infty}\l_j(\o)=\k(\o)
\]
such that
\begin{itemize}

\item[1)] Invariance: $\l_i(\theta\o)=\l_i(\o)$
$A(\o)E_j(\o)=E_j(\t\o)$, $A(\o)F_j(\o)\subset F_j(\t\o)$;
\item[2)] Invariant Splitting:
\[E_1(\o)\oplus\cdots\oplus E_j(\o)\oplus
      F_j(\o)={X}\; \text{ and }\; F_j(\o)=E_{j+1}(\o)\oplus
      F_{j+1}(\o);\]
\item[3)] Lyapunov Exponents:
      \[\lim_{n\to\pm\infty}\frac1n\log\|T^n(\o)v\|=\l_j(\o), \text {for all }
      v(\neq0)\in E_j(\o);\]
\item[4)] Exponential Decay Rate on $F_j(\o)$:
\[\lim_{n\to+\infty}\frac1n\log\|T^n(\o)\big|_{F_j(\o)}\|=\l_{j+1}(\o)
\] and if for $v\in F_j(\o)\setminus\{0\}$ such that $T^{-n}(\o)v$ exists for
all $n\ge0$, then
\[\liminf_{n\to+\infty}\frac1n\log\|T^{-n}(\omega)v\|\ge-\l_{j+1}(\o);
\]
\item[5)] Tempered Projections: The projection operators associated with the decomposition
          \[X=\left(\bigoplus_{i=1}^j E_i(\o)\right)\oplus
          F_j(\o)\]
          are tempered.
\item[6)] $\l_j(\o)$ and $ E_j(\o)$ are measurable and the
projection operators are strongly measurable.
\end{itemize}
\end{itemize}
\end{theorem}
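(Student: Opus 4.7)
The plan is to follow the Banach space Oseledets-type construction of \cite{LL}, whose core ingredients are Kingman's subadditive ergodic theorem, Kuratowski measure estimates as a substitute for compactness of the unit ball, and a backward-iteration graph-transform argument producing measurable invariant splittings. The first step is to establish almost-sure existence of $\lambda_0(\omega)$ and $\kappa(\omega)$: both $\log\|T^n(\omega)\|$ and $\log\|T^n(\omega)\|_\kappa$ are subadditive cocycles over $\theta$ (for the second, using submultiplicativity of the Kuratowski measure under composition), and the integrability hypothesis $\log^+\|A\|\in\mathcal{L}^1(\Omega,\mathcal{F},\mathbb{P})$ places them within the scope of Kingman's theorem, yielding $\theta$-invariant measurable limits on a $\theta$-invariant full-measure set $\tilde\Omega$.

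On the invariant subset where $\lambda_0(\omega)>\kappa(\omega)$, I would extract exponents iteratively from the top. For any $\mu\in(\kappa,\lambda_0)$ the Kuratowski bound $\|T^n(\omega)\|_\kappa\le e^{n(\kappa(\omega)+\varepsilon)}$ forces the slow set $S_\mu(\omega)=\{v\in X:\limsup_n n^{-1}\log\|T^n(\omega)v\|\le\mu\}$ to be a closed $A$-invariant subspace of finite codimension, the codimension being the total multiplicity of exponents strictly above $\mu$. A complementary fast subspace $E_1(\omega)$ of dimension equal to the multiplicity of $\lambda_0$ would be obtained as the gap-metric limit $\lim_{n\to\infty}T^n(\theta^{-n}\omega)V_n$ of appropriately chosen top-singular-direction $k$-planes $V_n$; the spectral gap $\lambda_0-\mu>0$ drives exponential convergence in the associated Grassmannian. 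The precise dimensions and the separations between consecutive exponents would be certified by applying Kingman to the exterior-power cocycles $\log\|\Lambda^j T^n(\omega)\|$ and locating the jumps in slope.

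With $E_1(\omega)$ in hand, an invariant complement would be produced by a graph transform: starting from a measurable reference splitting $X=E_1(\omega)\oplus H(\omega)$, solve for a bounded linear $\Psi(\omega):H(\omega)\to E_1(\omega)$ whose graph is $A$-invariant, via a Neumann-type series that converges because of the spectral gap between $\lambda_0$ and the supremal growth rate on $H(\omega)$; strong measurability and temperedness of the resulting projection would follow exactly in the style of Proposition \ref{P:InvariantSubspace2}. Iterating the whole procedure on the restricted cocycle over the new complement gives either finitely many Oseledets blocks together with an infinite-dimensional residual $F(\omega)$ of growth rate $\le\kappa$ (Case (II)), or an infinite descending sequence $\lambda_j\downarrow\kappa$ with nested infinite-dimensional tails $F_j(\omega)$ (Case (III)); Case (I) is the default on the complementary invariant set $\{\lambda_0=\kappa\}$.

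The principal obstacle is the loss of compactness of the unit ball in an infinite-dimensional $X$. In finite dimensions $E_1(\omega)$ is extracted from a subsequential limit by compactness of the Grassmannian; here one must instead use the Kuratowski estimate to argue that any sequence of $k$-planes whose $T^n$-growth exceeds $\kappa$ is essentially finite-dimensional and therefore precompact in the gap metric. Running in parallel is the delicate bookkeeping of strong measurability through the backward-limit and graph-transform constructions, with temperedness of each projection emerging from coboundary estimates on $\log\|\pi^{E_j}(\omega)\|$ in the spirit of Lemma \ref{L:TemperedFunctionA}.
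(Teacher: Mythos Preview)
The paper does not prove Theorem~\ref{T:MET} at all: it is quoted verbatim from \cite{LL} (Lian--Lu) as an external input, and the paper's own contributions (Theorems A--C and \ref{T:ConeImpliesSplit}--\ref{T:Cone&SplitMET}) take it for granted. Your outline is essentially a sketch of the Lian--Lu argument itself, and in that sense it is the ``same approach'' by default, since there is no competing proof in the paper to compare against.

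One technical caveat on your sketch: in a general separable Banach space $X$ the exterior powers $\Lambda^j X$ and the operator norms $\|\Lambda^j T^n(\omega)\|$ are not canonically available, so applying Kingman to $\log\|\Lambda^j T^n(\omega)\|$ to read off multiplicities is not directly meaningful. The Lian--Lu memoir replaces this with volume-type functions built from Gram-style determinants on finite tuples of vectors, which are subadditive and serve the same purpose. Apart from this point, your identification of the main difficulties (loss of unit-ball compactness handled via the Kuratowski measure, measurability and temperedness bookkeeping through backward limits and graph transforms) matches the structure of \cite{LL}.
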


For sake of simpleness, we hereafter will only prove Theorem A in the case that $\t$  is {\bf ergodic}. So the functions $\l_0(\cdot), \k(\cdot),k(\cdot),\l_i(\cdot)'s$ defined in Theorem \ref{T:MET} are now all constants. Let us denote them by $\l_0, \k,k,\l_i$, respectively. For the non-ergodic case, the proof will be the same except that all the constants we derived become $\t$-invariant functions.

By the assumption $\l_0>\k$ in Theorem A, only Case (II) (while $k<\infty$) and Case (III) (while $k=\infty$) in Theorem \ref{T:MET} can happen. Based on this, we introduce the so called {\it Lyapunov Norm} as follows. Fix a sequence of small numbers $\{\e_i\}_{1\le i<k+1}$ such that $\e_1\ll\l_1-\l_2$ and $0<\e_i\ll\min\{\l_{i}-\l_{i+1},\l_{i-1}-\l_i\}$ for $1<i<k+1$. For $1\le i<k+1$ and $\o\in\O$, define the norm $|\cdot|_{\o,i}$:
\begin{equation}\label{E:LyapunovNorm}
|v|_{\o,i}=
\begin{cases}
\sum_{n=-\infty}^\infty\frac{|T^n(\o)v|}{e^{n\l_j+|n|\e_j}}, &\text{ when }v\in E_j(\o),\ 1\le j\le i,\\
\sum_{n=0}^\infty\frac{|T^n(\o)v|}{e^{n(\l_{i+1}+\e_{i+1})}}, &\text{ when }v\in E'_i(\o),\\
\sum_{j=1}^{i+1}|\pi_j^i(\o)v|_{\o,i}, &\text{ otherwise,}
\end{cases}
\end{equation}
where
\begin{equation*}
E'_i(\o)=
\begin{cases}
\left(\bigoplus_{j=i+1}^{k}E_j(\o)\right)\oplus
F(\o), &\text{ if case (II) holds},\\
 F_i(\o), &\text{ if case (III) holds};
\end{cases}
\end{equation*}
while $\pi^i_j(\o):X\to E_j(\o),$ for $j=1,\cdots,i$, and $\pi^i_{i+1}(\o):X\to E_{i}'(\o)$ are the projections associated with the splitting
$$X=\left(\bigoplus_{j=1}^iE_j(\o)\right)\bigoplus E'_i(\o).$$
By Theorem \ref{T:MET}, for each $1\le i<k+1$, $|\cdot|_{\o,i}$ is well defined $\mb P$-a.e. and is strongly measurable. The following lemma implies the relation between $|\cdot|_{\o,i}$ and the usual norm $|\cdot|$ in $X$.
\begin{lemma}\label{L:LyapunovMetric}
For each $1\le i<k+1$, there exists a tempered function $K_i:\O\to [1,\infty)$ such that
\begin{itemize}
\item[i)] $|v|\le|v|_{\o,i}\le K_i(\o)|v|$;
\item[ii)] For all $v\in E_j(\o)$ and $1\le j\le i$,
$$e^{\l_j-\e_j}|v|_{\o,i}\le |A(\o)v|_{\t\o,i}\le e^{\l_j+\e_j}|v|_{\o,i};$$
\item[iii)] For all $v\in E'_i(\o)$
$$|A(\o)v|_{\t\o,i}\le e^{\l_{i+1}+\e_{i+1}}|v|_{\o,i}.$$
\end{itemize}
\end{lemma}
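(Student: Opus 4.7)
\medskip
\noindent\textbf{Proof plan for Lemma \ref{L:LyapunovMetric}.}

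The plan is to split the argument by the three cases in the definition \eqref{E:LyapunovNorm} and tackle the equivariance statements (ii) and (iii) by direct reindexing of the defining series, leaving part (i) as the step requiring the construction of the tempered function $K_i$.

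First I would dispatch parts (ii) and (iii) since they are formal consequences of the definition. For $v\in E_j(\o)$ with $1\le j\le i$, by invariance $A(\o)v\in E_j(\t\o)$, so
\[
|A(\o)v|_{\t\o,i}=\sum_{n=-\infty}^{\infty}\frac{|T^{n+1}(\o)v|}{e^{n\l_j+|n|\e_j}}=e^{\l_j}\sum_{m=-\infty}^{\infty}\frac{|T^m(\o)v|}{e^{m\l_j+|m-1|\e_j}}.
\]
Using the trivial bounds $|m|-1\le |m-1|\le |m|+1$, the exponential correction $e^{|m-1|\e_j}$ differs from $e^{|m|\e_j}$ by at most a factor $e^{\pm\e_j}$, which immediately yields the two-sided estimate in (ii). For (iii), invariance $A(\o)E'_i(\o)\subset E'_i(\t\o)$ gives the analogous one-sided computation; after the substitution $m=n+1$ the sum starts at $m=1$, hence the inequality is one-sided and produces the factor $e^{\l_{i+1}+\e_{i+1}}$.

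For part (i), the lower bound $|v|\le |v|_{\o,i}$ is free: in each of the three cases the $n=0$ summand is $|v|$ (respectively $|\pi^i_j(\o)v|$), and the remaining summands are nonnegative, so the general case follows from the triangle inequality applied to $v=\sum_{j=1}^{i+1}\pi^i_j(\o)v$. The real work is the upper bound, and this is where I would concentrate effort. The main obstacle is producing a single tempered majorant $K_i(\o)$ that controls $|T^n(\o)v|$ uniformly in $v$ across the finite-dimensional subspaces $E_j(\o)$ and across the infinite-dimensional tail $E'_i(\o)$. Concretely, for each $1\le j\le i$ I would define
\[
C_j(\o):=\sup_{n\in\mb Z}\Bigl(\|T^n(\o)|_{E_j(\o)}\|\,\|(T^n(\o)|_{E_j(\o)})^{-1}\|\Bigr)\,e^{-|n|\e_j/2},
\]
which is finite $\mb P$-a.e.\ by Theorem \ref{T:MET} (items 2 and 4) since both operator norms grow like $e^{\pm n\l_j+o(n)}$. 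An auxiliary step is to check that $C_j$ is tempered, which I would handle by an application of Lemma \ref{L:TemperedFunctionA} (or Lemma \ref{L:TemperedFunctionB}) in the appendix, exactly analogously to the estimate \eqref{E:TemperedFunction1} used in Lemma \ref{L:ExponentialSeparation}. With this, any $v\in E_j(\o)$ satisfies $|T^n(\o)v|\le C_j(\o)\,e^{n\l_j+|n|\e_j/2}|v|$, and the defining series is bounded by $C_j(\o)|v|\sum_{n\in\mb Z}e^{-|n|\e_j/2}$, a tempered constant times $|v|$. An analogous argument on $E'_i(\o)$ — using item 3 of Theorem \ref{T:MET} in Case (II) (growth rate $\le\k<\l_{i+1}$ on $F(\o)$) or item 4 in Case (III) (growth rate $\l_{i+1}$ on $F_i(\o)$) — yields a tempered $C'_i(\o)$ with $|T^n(\o)v|\le C'_i(\o)e^{n(\l_{i+1}+\e_{i+1}/2)}|v|$ for $n\ge 0$, and the one-sided series sums to $\tfrac{C'_i(\o)}{1-e^{-\e_{i+1}/2}}|v|$.

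Finally, I would assemble the general case through $|v|_{\o,i}=\sum_{j=1}^{i+1}|\pi^i_j(\o)v|_{\o,i}$ and invoke the temperedness of the projections $\pi^i_j(\o)$ (item 4 or item 5 of Theorem \ref{T:MET}) to conclude $|v|_{\o,i}\le K_i(\o)|v|$ with
\[
K_i(\o)\;=\;\Bigl(\sum_{j=1}^{i}\tfrac{2C_j(\o)}{1-e^{-\e_j/2}}+\tfrac{C'_i(\o)}{1-e^{-\e_{i+1}/2}}\Bigr)\Bigl(\sum_{j=1}^{i+1}\|\pi^i_j(\o)\|\Bigr),
\]
which is tempered as a finite product/sum of tempered functions. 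The technical heart of the argument is therefore the uniform-in-$v$ tempered growth bound on each $E_j(\o)$ and on $E'_i(\o)$; once that is in place, the rest is a geometric-series computation together with the reindexing arguments for (ii)–(iii).
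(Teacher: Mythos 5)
Your proposal is correct and follows essentially the same route as the paper: for part (i) the paper likewise reduces everything to tempered bounds $|T^n(\o)v|\le K_j'(\o)e^{n\l_j+\frac12|n|\e_j}|v|$ on each block $E_j(\o)$ and $E_i'(\o)$ (asserted via Theorem \ref{T:MET} with a pointer to \cite{LL}), sums the resulting geometric series, and absorbs the tempered projections $\pi^i_j$, while parts (ii)--(iii) are dismissed there as ``straightforward computation'' --- your reindexing supplies exactly that computation correctly. One small correction: the constant you actually need is $\sup_{n\in\mb Z}\|T^n(\o)|_{E_j(\o)}\|\,e^{-n\l_j-|n|\e_j/2}$ rather than the condition-number quantity $\sup_{n\in\mb Z}\|T^n(\o)|_{E_j(\o)}\|\,\|(T^n(\o)|_{E_j(\o)})^{-1}\|\,e^{-|n|\e_j/2}$, since the latter alone does not imply $|T^n(\o)v|\le C_j(\o)e^{n\l_j+|n|\e_j/2}|v|$ (the condition number can equal $1$ while $\|T^n|_{E_j}\|$ deviates from $e^{n\l_j}$ at finite $n$); the asymptotics you cite make the corrected supremum finite, and your temperedness argument for it goes through unchanged.
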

\begin{proof}
i) Clearly, $|v|\le |v|_{\o,i}$. As for the existence of the function $K_i$, it follows from Theorem \ref{T:MET} that there exist tempered functions $K_j':\O\to [1,\infty),\ 1\le j\le i+1$ such that, for $\mb P$-a.e. $\o\in\O$,
$$|T^n(\o)v|\le K_j'(\o) e^{n\l_j+\frac12|n|\e_j}|v|,\ \text{ for any } 1\le j\le i,\ v\in E_j(\o),\  n\in\mb Z,$$
and
$$|T^n(\o)v|\le K'_{i+1}(\o)e^{n(\l_{i+1}+\frac12\e_{i+1})}|v|,\ \text{ for any } v\in E'_i(\o),\ n\in \mb N\cup\{0\}.$$
(For more details, we refer the reader to \cite{LL}.) Moreover, by Theorem \ref{T:MET}, $\{\|\pi_j\|\}_{1\le j\le i+1}$ are tempered functions. So, by letting
$$K_i(\cdot)=(i+1)\cdot\max_{1\le j\le i+1}\left\{\frac{2K'_j(\cdot)\|\pi_j^i(\cdot)\|}{1-e^{-\frac12\e_j}}\right\},$$
we have completed the proof of i).

ii)-iii) follow from the definition of $|\cdot|_{\o,i}$ by straightforward computation, which we omit here.
\end{proof}

Now we are ready to construct the invariant cones in Theorem A. Given $l>0$, for $1\le i<k+1$ and $\o\in\O$, define
\vskip 1mm
\begin{equation}\label{E:ConeI}
 C_i^l(\o)=\left\{v\in X\big|\ |\pi^i_{i+1}(\o)v|_{\o,i}\le l|v-\pi^i_{i+1}(\o)v|_{\o,i}\right\}.
\end{equation}
The properties of $C_i^l(\o)$ will be summarized in the following two lemmas
\vskip 2mm
\begin{lemma}\label{L:InvariantCi}
For each $l>0$ and $1\le i<k+1$, there exists a tempered function $\chi^l_i(\o)\ge 1$ such that
$$\udi(A(\o)C^l_i(\o),X\setminus C^{le^{-\e_i}}_i(\t\o))\ge \frac1{\chi^l_i(\o)},\ \mb P-a.e..$$
\end{lemma}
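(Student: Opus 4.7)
\textbf{Proof proposal for Lemma \ref{L:InvariantCi}.} The plan is to first verify that $A(\o)$ maps $C_i^l(\o)$ into a strictly smaller cone with a quantitative margin, measured in the Lyapunov norm $|\cdot|_{\t\o,i}$, and then convert that Lyapunov-norm margin into a usual-norm separation via a Lipschitz ``defect'' functional and the tempered comparison $|\cdot| \le |\cdot|_{\t\o,i} \le K_i(\t\o)|\cdot|$ from Lemma \ref{L:LyapunovMetric}(i).

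\emph{Step 1 (contraction on the cone).} Given $v \in C_i^l(\o)$, write $v = u + w$ with $u \in \bigoplus_{j=1}^{i} E_j(\o)$ and $w = \pi_{i+1}^i(\o) v \in E_i'(\o)$, so that $|w|_{\o,i} \le l |u|_{\o,i}$. By invariance of the splitting, $A(\o) u \in \bigoplus_{j=1}^i E_j(\t\o)$ and $A(\o) w \in E_i'(\t\o)$, hence $\pi_{i+1}^i(\t\o) A(\o) v = A(\o) w$. Applying Lemma \ref{L:LyapunovMetric}(ii)--(iii) together with the additive structure of $|\cdot|_{\o,i}$ on $\bigoplus_{j=1}^i E_j(\o)$ yields
\[
|A(\o) u|_{\t\o,i} \ge e^{\l_i - \e_i}|u|_{\o,i},\qquad |A(\o) w|_{\t\o,i} \le e^{\l_{i+1} + \e_{i+1}}|w|_{\o,i}.
\]
Combining these gives $|A(\o)w|_{\t\o,i} \le l\, e^{-\e_i - \g_i}\,|A(\o)u|_{\t\o,i}$, where $\g_i := \l_i - \l_{i+1} - 2\e_i - \e_{i+1} > 0$ by the standing choice of the $\e_j$. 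Thus $A(\o)v$ lies deep inside $C_i^{l e^{-\e_i}}(\t\o)$ in the Lyapunov metric.

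\emph{Step 2 (defect functional).} Define $\Phi : X \to \R$ by
\[
\Phi(x) = |\pi_{i+1}^i(\t\o) x|_{\t\o,i} - l e^{-\e_i} |x - \pi_{i+1}^i(\t\o) x|_{\t\o,i},
\]
so that $x \in C_i^{le^{-\e_i}}(\t\o) \Longleftrightarrow \Phi(x) \le 0$. Directly from the definition, $\Phi$ is $(1+le^{-\e_i})$-Lipschitz with respect to $|\cdot|_{\t\o,i}$, hence $(1+l e^{-\e_i})K_i(\t\o)$-Lipschitz with respect to $|\cdot|$ by Lemma \ref{L:LyapunovMetric}(i). For any $v' = A(\o)v$ with $v \in C_i^l(\o)$ and $|v'|=1$, Step 1 gives
\[
\Phi(v') \le \bigl(l e^{-\e_i - \g_i} - l e^{-\e_i}\bigr)|A(\o) u|_{\t\o,i} = -l e^{-\e_i}(1 - e^{-\g_i})|A(\o)u|_{\t\o,i}.
\]
Using $|v'|_{\t\o,i} = |A(\o) u|_{\t\o,i} + |A(\o)w|_{\t\o,i} \le (1 + l e^{-\e_i - \g_i})|A(\o)u|_{\t\o,i}$ and $|v'|_{\t\o,i} \ge |v'| = 1$, one obtains
\[
\Phi(v') \le -\frac{l e^{-\e_i}(1 - e^{-\g_i})}{1 + l e^{-\e_i - \g_i}} =: -c(l,\e_i,\g_i) < 0.
\]

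\emph{Step 3 (conversion to usual norm and temperedness).} For any $v'' \in X \setminus C_i^{l e^{-\e_i}}(\t\o)$ one has $\Phi(v'') > 0$, so the Lipschitz estimate gives
\[
c(l,\e_i,\g_i) \le \Phi(v'') - \Phi(v') \le (1 + l e^{-\e_i})K_i(\t\o)\,|v' - v''|,
\]
which yields $|v' - v''| \ge 1/\chi_i^l(\o)$ with
\[
\chi_i^l(\o) := \frac{(1 + l e^{-\e_i})K_i(\t\o)}{c(l,\e_i,\g_i)}.
\]
Taking the infimum over $v' \in A(\o)C_i^l(\o) \cap S$ and $v'' \in X \setminus C_i^{l e^{-\e_i}}(\t\o)$ delivers the required inequality. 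Since $K_i$ is tempered by Lemma \ref{L:LyapunovMetric}(i) and temperedness is preserved under the shift $\o \mapsto \t\o$, the function $\chi_i^l$ is tempered and clearly $\ge 1$ (after enlarging the constant if necessary).

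The only subtle point is the Step 2 estimate $\Phi(v') \le -c$: it requires the contraction margin $\g_i$ from Step 1 to be genuinely positive and, more importantly, it requires converting the $|u'|_{\t\o,i}$-dependent bound into an $\o$-independent numerical constant, which is precisely what the inequality $|v'|_{\t\o,i} \ge |v'| = 1$ buys us. Once this is in place, the rest is bookkeeping on the tempered factor $K_i(\t\o)$.
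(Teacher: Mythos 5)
Your proof is correct and follows essentially the same route as the paper's: both first establish the Lyapunov-norm contraction $|\pi^i_{i+1}(\t\o)A(\o)v|_{\t\o,i}\le l e^{-\e_i}e^{-\g_i}|A(\o)v-\pi^i_{i+1}(\t\o)A(\o)v|_{\t\o,i}$ from Lemma \ref{L:LyapunovMetric}(ii)--(iii), and then convert this margin into a usual-norm separation via $|\cdot|\le|\cdot|_{\t\o,i}\le K_i(\t\o)|\cdot|$. Your Lipschitz defect-functional $\Phi$ is a clean repackaging of the paper's componentwise estimate on $\D=\max_j|\pi^i_j(\t\o)(w_1-w_2)|_{\t\o,i}$, and in fact yields a marginally simpler tempered bound (no $\max_j\|\pi^i_j(\t\o)\|$ factor) because the additivity of the Lyapunov norm already dominates each component.
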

\begin{proof}
By  ii)-iii) of Lemma \ref{L:LyapunovMetric}, we obtain that for all $v\in C^l_i(\o)$,
\begin{equation}\label{E:uniform-cones-contr}
\left|\pi_{i+1}^i(\t\o)A(\o)v\right|_{\o,i}\le le^{-\l_i+\l_{i+1}+\e_i+\e_{i+1}}\left|A(\o)v-\pi_{i+1}^i(\t\o)A(\o)v\right|_{\o,i}.
\end{equation}
Thus $A(\o)C_i^l(\o)\subset C_i^{le^{-\e_i}}(\t\o)$. Now, for any nonzero vector $w_1\in A(\o)C_i^l(\o)$ and any $w_2\in X\setminus C^{le^{-\e_i}}_i(\t\o)$, we define
$$\D=\max\left\{\left|\pi^i_j(\t\o)(w_1-w_2)\right|_{\t\o,i},\ 1\le j\le i+1\right\}.$$
By virtue of \eqref{E:uniform-cones-contr} (with $w_1=A(\o)v$ for some $v\in C_i^l(\o)$), one has
\begin{align*}
&l^{-1}e^{\l_i-\l_{i+1}-\e_i-\e_{i+1}}|\pi^i_{i+1}(\t\o)w_1|_{\t\o,i}-i\D\\
\le&\sum_{j=1}^i|\pi^i_j(\t\o)w_1|_{\t\o,i}-i\D\le \sum_{j=1}^i|\pi^i_j(\t\o)w_2|_{\t\o,i}\\
\le&l^{-1}e^{\e_i}|\pi^i_{i+1}(\t\o)w_2|_{\t\o,i}\le l^{-1}e^{\e_i}(|\pi^i_{i+1}(\t\o)w_1|_{\t\o,i}+\D),
\end{align*}
and hence,
\begin{equation}\label{E:Delta-lowerbdd}
\D\ge \frac{l^{-1}(e^{\l_i-\l_{i+1}-\e_i-\e_{i+1}}-e^{\e_i})|\pi^i_{i+1}(\t\o)w_1|_{\t\o,i}}{i+l^{-1}e^{\e_i}}.
\end{equation}
Due to the definition of $\D$, we see that $|w_2|_{\t\o,i}\ge |w_1|_{\t\o,i}-(i+1)\D$. So
\begin{align*}
|\pi^i_{i+1}(\t\o)w_1|_{\t\o,i}\ge& |\pi^i_{i+1}(\t\o)w_2|_{\t\o,i}-\D\\
\ge&(l^{-1}e^{\e_i}+1)^{-1}|w_2|_{\t\o,i}-\D\\
\ge& (l^{-1}e^{\e_i}+1)^{-1}|w_1|_{\t\o,i}-\left((i+1)(l^{-1}e^{\e_i}+1)^{-1}+1\right)\D.
\end{align*}
Therefore, together with \eqref{E:Delta-lowerbdd}, we get
\begin{equation}\label{Delta-lowerbdd-1}
\D\ge|w_1|_{\t\o,i}\cdot\frac{l(e^{\l_i-\l_{i+1}-\e_i-\e_{i+1}}-e^{\e_i})}
{l(e^{\l_i-\l_{i+1}-\e_i-\e_j}-e^{\e_i})+(li+e^{\e_i})(e^{\e_i}+l)}.
\end{equation}
On the other hand, by applying i) of Lemma \ref{L:LyapunovMetric}, one has
\begin{equation}\label{Delta-upperbdd-1}\D\le K_i(\o)\cdot |w_1-w_2|\cdot \max_{1\le j\le i+1}\{\|\pi_j^i(\t\o)\|\}.
\end{equation}
Combing \eqref{Delta-lowerbdd-1} with \eqref{Delta-upperbdd-1}, we obtain
\begin{equation*}\label{E:SeparationFormula}
\di\left(\frac{w_1}{|w_1|},X\setminus C^{le^{-\e_i}}_i(\t\o)\right)
\ge \frac{1}{\chi_i^l(\o)},
\end{equation*}
where
\begin{equation}\label{E:SeparationFormula1}
\chi_i^l(\o)=\left(1+\frac{(li+e^{\e_i})(e^{\e_i}+l)}
{l(e^{\l_i-\l_{i+1}-\e_i-\e_{i+1}}-e^{\e_i})}\right)K_i(\t\o)\max_{1\le j\le i+1}\{\|\pi_j^i(\t\o)\|\}.
\end{equation}
By the arbitrariness of $w_1$, we have proved the lemma.
\end{proof}

\begin{lemma}\label{L:CiSubCi+1}
If $k>1$, then there exist positive tempered functions $\{l_i(\cdot)\}_{1\le i<k+1}$ with $l_i(\o)e^{-\e_i}\le l_i(\t\o)\le l_i(\o)e^{\e_i}$ for $\mb P$-a.e. $\o$, such that
 $$C^{l_i(\o)}_i(\o)\subset C^{l_{i+1}(\o)}_{i+1}(\o)\,\,\text{ for any }1\le i<k.$$
\end{lemma}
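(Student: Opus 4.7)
The plan is to construct the tempered, slowly-varying functions $l_i(\o)$ inductively in $i$, translating the nesting $C_i^{l_i(\o)}(\o)\subset C_{i+1}^{l_{i+1}(\o)}(\o)$ into a pointwise comparison between the two Lyapunov norms $|\cdot|_{\o,i}$ and $|\cdot|_{\o,i+1}$, and then applying a standard envelope construction to enforce the slow-variation bound.

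First, for a fixed $v\in C_i^{l_i(\o)}(\o)$ I would decompose $v=v_1+v_2+v_3$ with $v_1=\sum_{j\le i}\pi^i_j(\o)v\in E_1(\o)\oplus\cdots\oplus E_i(\o)$, $v_2=\pi^{i+1}_{i+1}(\o)v\in E_{i+1}(\o)$ and $v_3=\pi^{i+1}_{i+2}(\o)v\in E'_{i+1}(\o)$. The hypothesis then reads $|v_2+v_3|_{\o,i}\le l_i(\o)|v_1|_{\o,i}$, and the desired conclusion $v\in C_{i+1}^{l_{i+1}(\o)}(\o)$ is $|v_3|_{\o,i+1}\le l_{i+1}(\o)|v_1+v_2|_{\o,i+1}$. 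Since the two Lyapunov norms coincide by definition on $\bigoplus_{j\le i}E_j(\o)$, a direct inspection of \eqref{E:LyapunovNorm} gives $|v_1|_{\o,i}=|v_1|_{\o,i+1}$ and $|v_1+v_2|_{\o,i+1}=|v_1|_{\o,i+1}+|v_2|_{\o,i+1}\ge |v_1|_{\o,i}$, so the denominator on the right already dominates the denominator in the hypothesis.

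The core estimate is then $|v_3|_{\o,i+1}\le M_i(\o)|v_2+v_3|_{\o,i}$ for a tempered function $M_i(\o)$. Since $v_3\in E'_{i+1}(\o)$ and the top Lyapunov exponent on $E'_{i+1}(\o)$ is $\l_{i+2}<\l_{i+1}$, Theorem \ref{T:MET} yields a tempered $K'(\o)$ with $|T^n v_3|\le K'(\o)e^{n(\l_{i+2}+\e_{i+2}/2)}|v_3|$; substituting into the one-sided sum that defines $|v_3|_{\o,i+1}$ gives $|v_3|_{\o,i+1}\le\tilde K_i(\o)|v_3|$. On the other hand, writing $T^n v_3=\pi^{i+1}_{i+2}(\t^n\o)T^n(v_2+v_3)$ and absorbing the tempered bound on $\|\pi^{i+1}_{i+2}(\t^n\o)\|$ together with the top-exponent bound $|T^n(v_2+v_3)|\le K''(\o)e^{n(\l_{i+1}+\e_{i+1}/8)}|v_2+v_3|$ available on $E'_i(\o)$ yields $|v_3|\le|v_3|_{\o,i}\le M'_i(\o)|v_2+v_3|_{\o,i}$ with $M'_i$ tempered. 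Chaining these and invoking the hypothesis then gives $|v_3|_{\o,i+1}\le M_i(\o)\,l_i(\o)\,|v_1+v_2|_{\o,i+1}$.

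Finally, to upgrade the pointwise bound to a slowly-varying $l_{i+1}$, I would start from $l_1(\o)\equiv 1$ (trivially slowly varying) and, given $l_i$, set
\[
l_{i+1}(\o):=\sup_{n\in\mb Z}\bigl[M_i(\t^n\o)\,l_i(\t^n\o)\bigr]\,e^{-|n|\e_{i+1}},
\]
which is finite since $M_i l_i$ is tempered, dominates $M_i(\o)l_i(\o)$, and satisfies $l_{i+1}(\o)e^{-\e_{i+1}}\le l_{i+1}(\t\o)\le l_{i+1}(\o)e^{\e_{i+1}}$ by a direct reindexing $m=n+1$ (essentially the envelope construction behind Lemma \ref{L:TemperedFunctionB} in the appendix). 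The main obstacle will be the bookkeeping in the comparison step: one must carefully align the weights $\l_{i+1}+\e_{i+1}$ appearing in $|\cdot|_{\o,i}$ on $E'_i$ with the weights $\l_{i+2}+\e_{i+2}$ appearing in $|\cdot|_{\o,i+1}$ on $E'_{i+1}$, absorbing tempered projection norms and top-exponent factors while exploiting the spectral gap $\l_{i+1}>\l_{i+2}$ to keep all error series summable and produce a tempered $M_i$. Once this is done, the induction extends verbatim through case (III) of Theorem \ref{T:MET}.
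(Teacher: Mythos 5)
Your proposal is correct and follows essentially the same route as the paper's proof: induction on $i$ starting from $l_1\equiv 1$, the observation that the Lyapunov norms $|\cdot|_{\o,i}$ and $|\cdot|_{\o,i+1}$ agree on $\bigoplus_{j\le i}E_j(\o)$, a tempered bound of $|\pi^{i+1}_{i+2}(\o)v|_{\o,i+1}$ in terms of $|\pi^i_{i+1}(\o)v|_{\o,i}$ via the norm equivalence of Lemma \ref{L:LyapunovMetric}(i) and the tempered projection norms, and finally the envelope construction of Lemma \ref{L:TemperedFunctionB} to enforce $l_{i+1}(\o)e^{-\e_{i+1}}\le l_{i+1}(\t\o)\le l_{i+1}(\o)e^{\e_{i+1}}$. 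Your detour through the $T^n$-expansions in the core comparison step is slightly more laborious than the paper's direct use of $|w|\le |w|_{\o,i}\le K_i(\o)|w|$ together with $\pi^{i+1}_{i+2}(\o)v=\pi^{i+1}_{i+2}(\o)\pi^i_{i+1}(\o)v$, but it lands on the same tempered constant and is sound.
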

\begin{proof}
We will construct $l_i$ by induction. To start the procedure, we take $l_1\equiv1$. Suppose that $l_i$ is well defined for some $i\in [1,k)$, we now construct $l_{i+1}$.

 For $1\le i<k$, it is clear that
\begin{equation}\label{E:projectopn-j-j+1}
|\pi^{i}_{j}(\o)v|_{\o,i}=|\pi^{i+1}_{j}(\o)v|_{\o,i+1},\,\,\text{ for any }1\le j\le i\text{ and }v\in X.
\end{equation}
Now for any given $v\in C^{l_i(\o)}_i(\o)$, i) of Lemma \ref{L:LyapunovMetric} implies that
\begin{align*}\begin{split}
|\pi^{i+1}_{i+2}(\o)v|_{\o,i+1}&\le K_{i+1}(\o)|\pi^{i+1}_{i+2}(\o)v|= K_{i+1}(\o)|\pi^{i+1}_{i+2}(\o)\pi^{i}_{i+1}(\o)v|\\
&\le K_{i+1}(\o)|\pi^{i+1}_{i+2}(\o)\pi^{i}_{i+1}(\o)v|_{\o,i}\\
&\le K_{i+1}(\o)K_i(\o)\|\pi^{i+1}_{i+2}(\o)\|\cdot |\pi^{i}_{i+1}(\o)v|\\
&\le K_{i+1}(\o)K_i(\o)\|\pi^{i+1}_{i+2}(\o)\|l_i(\o)\sum_{j=1}^i|\pi^i_j(\o)v|_{\o,i}.
\end{split}
\end{align*}
Then \eqref{E:projectopn-j-j+1} entails that
\begin{equation}\label{E:li+1}
|\pi^{i+1}_{i+2}(\o)v|_{\o,i+1}\le K_{i+1}(\o)K_i(\o)\|\pi^{i+1}_{i+2}(\o)\|l_i(\o)\sum_{j=1}^{i+1}|\pi^{i+1}_j(\o)v|_{\o,i+1}.
\end{equation}
Since $K_{i+1},K_i,\|\pi^{i+1}_{i+2}\|$ and $l_i(\o)$ are tempered,  Lemma \ref{L:TemperedFunctionB} in the appendix implies that there is some tempered function $l_{i+1}(\o)\ge K_{i+1}(\o)K_i(\o)\|\pi^{i+1}_{i+2}(\o)\|l_i(\o)$ such that
$$e^{-\e_{i+1}}l_{i+1}(\o)\le l_{i+1}(\t\o)\le e^{\e_{i+1}}l_{i+1}(\o).$$
 Together with (\ref{E:li+1}), we have obtained that $C^{l_i(\o)}_i(\o)\subset C^{l_{i+1}(\o)}_{i+1}(\o)$, which completes the proof.
\end{proof}

Now we are ready to finish the proof of Theorem A:

\begin{proof}[Proof of Theorem A.]
When $k=1$, we define $C_1(\o)=C^l_1(\o)$ and $E_0(\o)=E_1(\o)$, $F_0(\o)=F(\o)$ for $\mb P$-a.e. $\o$ and some number $l>0$. Then, by Lemma \ref{L:InvariantCi} and Theorem \ref{T:MET}, this family of cones is a measurably contracting cone family.

When $1<k\le +\infty$, for each $1\le i<k+1$, we set $C_i(\o)=C^{l_i(\o)}_i(\o)$, where $l_i(\o)$ is defined in Lemma \ref{L:CiSubCi+1}. Also note that if one replace $l$ in (\ref{E:SeparationFormula1}) by $l_i(\o)$, then the resulted $\chi^{l_i(\o)}_i(\o)$ is still a tempered function. Then it follows from Lemmas \ref{L:InvariantCi}-\ref{L:CiSubCi+1} that $\{C_i(\cdot)\}_{\o\in\O}$ forms a family of nested measurably cones satisfying \eqref{Strong-focusing-1} and \eqref{E:ConeI-1}.
Thus, we have completed the proof of Theorem A.
\end{proof}

\begin{remark}\label{R:FiniteConeFamily}
Lemma \ref{L:CiSubCi+1} is mainly dealing with the case that $k=\infty$ and allow us to construct infinitely many nested co-invariant cones. When $k<\infty$ or if one only expect to construct finitely many monotonic cone families (even though $k=\infty$), the proof will be much simpler. In the case that $k<+\infty$, we only need the Lyapunov norm $|\cdot|_{\o,k}$ with which we define, for any given $l\ge 1$,
$$C_i(\o)=\left\{v\in X\Big|\ \sum_{j=i+1}^{k+1}|\pi^k_j(\o)v|_{\o,k}\le l\sum_{j=1}^i|\pi^k_j(\o)v|_{\o,k}\right\},\ 1\le i\le k.$$
It is easy to check such cone families satisfy the conditions in Theorem A. When $k=\infty$, for any finite integer $m\ge 2$, one can use the exactly same idea to construct $m$-many nested co-invariant cones as in the case that $k<+\infty$.
\end{remark}
\vskip 2mm

\subsection{Proofs of Theorem \ref{T:SplitImpliesConeL1} and \ref{T:Cone&SplitMET}}\label{S:Theorem4}

\begin{proof}[Proof of Theorem \ref{T:SplitImpliesConeL1}]
For convenience, we modify the original system by defining $\tilde A(\o)=\frac{A(\o)}{\|A(\o)\|}$ for all $\o\in\O$, and denote $\tilde T$ the linear system generated by $\tilde A$. It is easy to see that such re-scaling to $\tilde{A}$ does not affect Conditions  D1),D2) and D3'). Let $\tilde\l_0,\tilde\k$ be defined in \eqref{E:principle-LE}-\eqref{E:ess-principle-LE} with $T$ being replaced by $\tilde{T}$. Then
\begin{equation}\label{E:alternative-LP-relation}
\tilde\l_0=0\,\,\text{ and }\,\,\tilde\k=\k-\l_0.
\end{equation}
By virtue of D4), we further have
$$\log^+\|(\tilde A|_E)^{-1}\|=\log^+(\|A\|\|(A|_E)^{-1}\|)\in \mc L^1(\O,\mc F,\mb P),$$ by which
the Kingman's subadditive ergodic theorem (Lemma \ref{L:Kingman} in the appendix) will imply that, for $\mb P$-a.e. $\o$,
\begin{equation}\label{E:BackRate}
\tilde \l^{-}_{E}(\o)=:\lim_{n\to\infty}\frac1n\log\left\|\left(\tilde T^n(\o)|_{E(\o)}\right)^{-1}\right\| \text{ exists and is finite}.
\end{equation}
Recall that $\log^+ \|\tilde A\|=0.$ One can also define, by the Kingman's ergodic theorem, that
\begin{equation}\label{E:ERate}
\tilde\l_{E}(\o):=\lim_{n\to\infty}\frac1n\log\left\|\left(\tilde T^n(\o)|_{E(\o)}\right)\right\|,
\end{equation}
\begin{equation}\label{E:FRate}
\tilde\l_{F}(\o):=\lim_{n\to\infty}\frac1n\log\left\|\left(\tilde T^n(\o)|_{F(\o)}\right)\right\|,
\end{equation}
and
\begin{equation}\label{E:Kapa}
\tilde\k_{F}(\o)=\lim_{n\to\infty}\frac1n\log\left\|\left(\tilde T^n(\o)|_{F(\o)}\right)\right\|_\k,
\end{equation}
which are all $\t$-invariant.  By definition, it is obvious that
\begin{equation}\label{E:ERate1}
\tilde\l_E(\o)\ge \lim_{n\to\infty}\frac1n\log\left\|\left(\tilde T^n(\o)|_{E(\o)}\right)^{-1}\right\|^{-1}=-\tilde\l^{-}_{E}(\o)>-\infty.
\end{equation}
It then follows from Condition D3') and (\ref{E:FRate})-(\ref{E:ERate1}) that, for $\mb P$-a.e. $\o\in\O$,
\begin{align}\begin{split}\label{E:PositiveGap}
\tilde\l_{E}(\o)&\ge-\tilde\l^{-}_{E}(\o)=-\lim_{j\to\infty}\frac1{n_{j}(\o)}\log\left\|\left(\tilde T^{n_{j}(\o)}(\o)|_{E(\o)}\right)^{-1}\right\| \\
&\ge \lim_{j\to\infty}\frac1{n_{j}(\o)}\log\left(\left(K(\o)\right)^{-1}e^{j\d(\o)}\left\|\left(\tilde T^n(\o)|_{F(\o)}\right)\right\|\right)\\
&\ge \liminf_{j\to\infty}\frac{j}{n_{j}(\o)}\d(\o)+\tilde\l_{F}(\o).
\end{split}
\end{align}
Now we apply Theorem \ref{T:MET} to system $(\O,\mc F,\mb P,\t,\tilde T,X)$. By the invariance of splitting $X=E(\cdot)\oplus F(\cdot)$ and the definitions of $\tilde\l^{-}_{E}$ and $\tilde\l_{F}$, it is easy to see that the Lyapunov exponents corresponding to vectors in $E(\cdot)$ is bounded below by $-\tilde\l^{-}_{E}(\cdot)$ and  the Lyapunov exponents corresponding to vectors in $F(\cdot)$ is bounded above by $\tilde\l_{F}(\cdot)$. As a consequence, (\ref{E:PositiveGap}) entails that
$$\tilde\l_{0}=\tilde\l_{E}\ge -\tilde\l^{-}_{E}>\tilde\l_{F}\ge \tilde\k_{F}=\tilde\k,$$ and hence,  $\l_{0}>\k$ by \eqref{E:alternative-LP-relation}.

Moreover, we have that for some $\t$-invariant function $l:\O\to \mb N$
$$E(\o)=\bigoplus_{i=1}^{l(\o)}E_i(\o)\text{ and }\l_{l(\o)}\ge \l_{l(\o)+1}+\liminf_{j\to\infty}\frac{j}{n_{j}(\o)}\d(\o).$$
Then the desired measurably contracting cone $C(\o)$ can be derived from Lemma \ref{L:InvariantCi}. The proof of Theorem \ref{T:SplitImpliesConeL1} has been completed.
\end{proof}

\vskip 2mm
\begin{proof}[Proof of Theorem \ref{T:Cone&SplitMET}]\label{S:Theorem7}
We first note that: a) $\Longrightarrow$ b) by Theorem \ref{T:ConeImpliesSplit};
b) $\Longrightarrow$ c) by Theorem \ref{T:SplitImpliesConeW};
c) $\Longrightarrow$ d) by Theorem \ref{T:ConeImpliesSplitW};
e) $\Longrightarrow$ a) by Theorem A.
 So, in order to complete the proof, it suffices to show that d) $\Longrightarrow$ e).

Let $(\O,\mc F,\mb P,\t,T,X)$ be the system which possesses a Measurably Dominated Splitting $E(\cdot)\oplus F(\cdot)$ in Probability satisfying D1),(D2) and D3'). Since $\log^{+}\|A\|\in\mc L^{1}(\O,\mc F,\mb P)$, it follows from the Kingman's subadditive ergodic theorem that, for $\mb P$-a.e. $\o$, one can define $\l_{E},\l_{F}$ and $\k_{F}$ as in (\ref{E:ERate})-(\ref{E:Kapa}) (with $\tilde{T}$ replaced by $T$), respectively.

Now, let
$$E'(\o)=\left\{v\in E(\o)\Big|\ \lim_{n\to\infty}\frac1n\log|T^{-n}(\o)v|<\infty\right\}.$$
Clearly, $E'(\o)\subset E(\o)$. By Theorem \ref{T:MET} and $\l_{0}>-\infty$, one can obtain another invariant splitting $X=E'(\o)\oplus F'(\o)$ such that $\dim E'(\cdot)\ge 1$ and $F'(\o)\supset F(\o)$. Moreover, for $\mb P$-a.e. $\o$,
\begin{equation}\label{E:BackRate-T}
\l^{-}_{E'}(\o)=:\lim_{n\to\infty}\frac1n\log\left\|\left(T^n(\o)|_{E'(\o)}\right)^{-1}\right\| \text{ is well-defined and finite.}
\end{equation}
By the definitions of $\l^{-}_{E'}$, the Lyapunov exponents corresponding to vectors in $E'(\cdot)$ is bounded below by $-\l^{-}_{E'}(\cdot)$ and  the Lyapunov exponents corresponding to vectors in $F'(\cdot)$ is bounded above by $\max\{\l_{F}(\cdot),-\infty\}$.

By virtue of D3'), we can similarly employ \eqref{E:PositiveGap}, with $E$ (resp. $\tilde{T}$) being replaced by $E'$ (resp. $T$), to obtain that
$$\l_{E'}\ge -\l^{-}_{E'}>\l_{F}.$$
Note also that $\l_E\ge-\l^{-}_{E'}$ and $\l^{-}_{E'}$ is finite. It entails that
$$\l_{0}=\l_E\ge \l_{E'}\ge-\l^{-}_{E'}>\max\{\l_{F},-\infty\}\ge \k.$$
Thus, we have completed the proof of ``d) $\Longrightarrow$ e)".
\end{proof}
\begin{remark}\label{R:-infty}
When $E'(\o)\subsetneq E(\o)$, it falls into case (II) of  Theorem \ref{T:MET}. In this case, $\l_{F}=-\infty$ (and hence, $\k=-\infty$), $\dim E'(\o)<\dim E(\o)$ and $F(\o)\subsetneq F'(\o)$. Thus from the measurably dominated splitting in probability, one can only derive a measurably contracting cone families with lower dimensions.
\end{remark}


\appendix
\section{Ergodic Theory}\label{S:AppendixA}
In this appendix, we state several known results. The following two lemmas are standard results in ergodic theory.
\begin{lemma}\label{L:TemperedFunctionA}
Let $(\Lambda,\mc G, \mu)$ be a probability space and $\vartheta:\Lambda\to\Lambda$ be a $\mu$-measure preserving transformation. The for any measurable function $f:\Lambda\to\mb R$ we have that
$$\liminf_{n\to\infty}\frac1n f(\vartheta^{n}(x))\le 0\ a.s..$$
Moreover, if there exists $F\in L^{1}(\mu)$ such that
$$f(\vartheta(x))-f(x)\le F(x)\ a.s.,$$
or there exists $G\in L^{1}(\mu)$ such that
$$f(\vartheta(x))-f(x)\ge G(x)\ a.s.,$$
then $$f\circ \vartheta-f \in L^{1}(\mu)$$
and $$\lim_{n\to\infty}\frac1nf(\vartheta^{n}(x))=0.$$
\end{lemma}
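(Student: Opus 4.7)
The plan is to prove the two parts separately, with the first part serving as the foundation for the limit statement in the second.

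For the first part, I would fix $\varepsilon>0$ and consider the set $B_\varepsilon=\{x:\liminf_{n\to\infty} f(\vartheta^n x)/n > \varepsilon\}$. For each $x\in B_\varepsilon$ one may choose an integer $N(x)$ such that $f(\vartheta^n x)>n\varepsilon/2$ for all $n\ge N(x)$, and the sublevel sets $B_{\varepsilon,N}=\{x\in B_\varepsilon:N(x)\le N\}$ exhaust $B_\varepsilon$. For $x\in B_{\varepsilon,N}$ and $n\ge N$, the image $\vartheta^n x$ lies in $\{y:f(y)>n\varepsilon/2\}$, so $B_{\varepsilon,N}\subset \vartheta^{-n}\{f>n\varepsilon/2\}$. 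Since $\vartheta$ preserves $\mu$, $\mu(B_{\varepsilon,N})\le\mu\{f>n\varepsilon/2\}$, and the right side tends to $0$ because $f$ is real-valued (hence finite a.s.). Thus $\mu(B_\varepsilon)=0$, and letting $\varepsilon\downarrow 0$ delivers the first conclusion.

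For the second part, assume first that $h:=f\circ\vartheta-f\le F\in L^1$; the hypothesis $h\ge G\in L^1$ reduces to this by replacing $f$ with $-f$. My strategy splits into three steps: (i) establish $h\in L^1$ with $\int h\,d\mu=0$; (ii) apply Birkhoff's theorem to $h$; (iii) combine with the first part applied to $\pm f$. For step (i), I would truncate: put $f_M(x)=\max(-M,\min(M,f(x)))\in L^\infty$ and $h_M:=f_M\circ\vartheta-f_M$, so that $\int h_M\,d\mu=0$ automatically. Truncation is $1$-Lipschitz, which gives $|h_M|\le|h|$ pointwise, and $h_M\to h$ a.e.\ since $f$ is finite a.e. The hypothesis $h\le F$ yields $h^+\le |F|\in L^1$, hence $h_M^+\le h^+$, and dominated convergence gives $\int h_M^+\to\int h^+<\infty$. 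Applying Fatou to $h_M^-\to h^-$ together with the identity $\int h_M^-=\int h_M^+$ yields $\int h^-\le\int h^+<\infty$, so $h\in L^1$ and $\int h\,d\mu=\lim\int h_M=0$. For step (ii), Birkhoff's ergodic theorem then gives $\tfrac{1}{n}\sum_{k=0}^{n-1}h(\vartheta^k x)\to \bar h(x)$ a.s.\ with $\bar h$ invariant and $\int\bar h\,d\mu=0$; the telescoping sum identity $\sum_{k=0}^{n-1}h(\vartheta^k x)=f(\vartheta^n x)-f(x)$ together with $f(x)/n\to 0$ identifies this limit with $f(\vartheta^n x)/n\to\bar h(x)$. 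For step (iii), applying the first part of the lemma to $f$ gives $\bar h\le 0$ a.s., and applied to $-f$ gives $\bar h\ge 0$ a.s., so $\bar h\equiv 0$ and the desired limit follows.

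The main obstacle is step (i), since the hypothesis only provides a one-sided bound on $h$ and $f$ itself is not assumed integrable, so neither $h^-\in L^1$ nor $\int h=0$ is available for free. The truncation trick sidesteps both difficulties: the $1$-Lipschitz property of truncation transfers the one-sided bound from $h$ to each $h_M$, while the built-in vanishing $\int h_M\,d\mu=0$ lets the already-controlled $\int h^+$ majorize $\int h^-$ in the limit. Once $h\in L^1$ is secured, the remaining steps are a clean application of Birkhoff combined with the first part of the lemma.
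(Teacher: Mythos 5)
Your proof is correct. The paper states this lemma in the appendix as a known fact and gives no proof, so there is nothing to compare against; your argument is the standard one (the measure-preservation/recurrence argument for the $\liminf$ statement, then Birkhoff's theorem applied to the cocycle increment combined with the first part applied to $\pm f$), and the truncation device for upgrading the one-sided bound to $f\circ\vartheta-f\in L^1$ is exactly the right way to handle the fact that $f$ itself need not be integrable. Two cosmetic remarks. First, the inequality $h_M^+\le h^+$ does not follow from $|h_M|\le|h|$ alone; you also need that $h_M$ and $h$ have the same sign, which holds because the truncation $t\mapsto\max(-M,\min(M,t))$ is nondecreasing (so $f(\vartheta x)<f(x)$ forces $h_M\le 0$). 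Second, the assertion $\int h\,d\mu=\lim_M\int h_M\,d\mu=0$ requires one further application of dominated convergence to $h_M^-\le h^-$, which is legitimate only after the Fatou step has shown $h^-\in L^1$; alternatively this identity is not needed at all, since step (iii) already forces $\bar h\equiv 0$ without knowing $\int\bar h\,d\mu=0$ in advance. Neither point affects the validity of the argument.
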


\begin{lemma}\label{L:TemperedFunctionB}
Let $f:\O\to (0,+\infty)$ be tempered and $\g:\O\to(0,+\infty)$ be a $\t$-invariant random variable. Then, there is a tempered random variable $R:\O\to(0,+\infty)$ such that for any $\o\in \O$
\begin{itemize}
\item[i)] $\frac1{R(\o)}\le f(\o)\le R(\o)$;
\item[ii)] $e^{-\g(\o)|n|}R(\o)\le R(\t^{n}\o)\le e^{\g(\o)|n|}R(\o)$.
\end{itemize}
\end{lemma}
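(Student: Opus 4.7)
The plan is to build $R$ as an orbit-weighted supremum: take a symmetric majorant of $f$, weight it along the orbit with a decaying factor involving $\g$, and exploit the $\t$-invariance of $\g$ to obtain the slow-variation estimate ii). First I would symmetrize $f$ by setting $\tilde R(\o):=\max\{f(\o),1/f(\o),1\}$, which is still tempered, is bounded below by $1$, and dominates both $f$ and $1/f$. Then I define
\[
R(\o):=\sup_{n\in\mb Z}\tilde R(\t^n\o)\,e^{-\g(\o)|n|}.
\]
Since $\tilde R$ is tempered and $\g(\o)>0$ $\mb P$-a.s., one has $\tilde R(\t^n\o)e^{-\g(\o)|n|}\to 0$ as $|n|\to\infty$, so the supremum is finite; the term $n=0$ already gives $R(\o)\ge\max\{f(\o),1/f(\o)\}$, which is property i).

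For ii), I would use that $\g$ is $\t$-invariant, so that for any $m\in\mb Z$,
\[
R(\t^m\o)=\sup_{k\in\mb Z}\tilde R(\t^{k+m}\o)\,e^{-\g(\o)|k|}=\sup_{j\in\mb Z}\tilde R(\t^j\o)\,e^{-\g(\o)|j-m|}.
\]
The triangle inequalities $|j|-|m|\le|j-m|\le|j|+|m|$ sandwich the weight between $e^{-\g(\o)|m|}e^{-\g(\o)|j|}$ and $e^{\g(\o)|m|}e^{-\g(\o)|j|}$, and taking the supremum over $j$ produces the two-sided bound $e^{-\g(\o)|m|}R(\o)\le R(\t^m\o)\le e^{\g(\o)|m|}R(\o)$, which is ii).

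The main obstacle is that ii) by itself only yields $\limsup_{|m|\to\infty}|m|^{-1}\log R(\t^m\o)\le\g(\o)$, which is in general nonzero, and so does not imply that $R$ is tempered. To upgrade this I would invoke the temperedness of $\tilde R$ a second time: fix any $\a\in(0,\g(\o))$ and pick a finite $C_\a(\o)$ with $\tilde R(\t^j\o)\le C_\a(\o)e^{\a|j|}$ for every $j\in\mb Z$. Splitting the supremum in the second display at $|j|=2|m|$, the near range $|j|\le 2|m|$ contributes at most $C_\a(\o)e^{2\a|m|}$, while on the far range $|j|>2|m|$ one has $e^{\a|j|-\g(\o)|j-m|}\le e^{\g(\o)|m|}e^{(\a-\g(\o))|j|}$, which at $|j|=2|m|$ equals $e^{(2\a-\g(\o))|m|}$ and decays further for larger $|j|$; both contributions are thus bounded by $C_\a(\o)e^{2\a|m|}$. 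Hence $|m|^{-1}\log R(\t^m\o)\le 2\a+o(1)$, and since $\a>0$ is arbitrary and $R\ge 1$ forces the liminf to be nonnegative, $|m|^{-1}\log R(\t^m\o)\to 0$ as $|m|\to\infty$, i.e.\ $R$ is tempered.
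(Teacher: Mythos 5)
Your proof is correct, and it is essentially the standard construction (the weighted orbit supremum $R(\o)=\sup_{n}\tilde R(\t^n\o)e^{-\g(\o)|n|}$), which is exactly what the paper itself uses implicitly, e.g.\ in the definition of $K'$ in (\ref{E:TemperedFunction1}) and of $K_\e$ in the proof of Lemma \ref{L:ExponentialSeparation}; the paper states Lemma \ref{L:TemperedFunctionB} without proof as a known fact. Your extra step showing that $R$ is itself tempered (splitting the supremum at $|j|=2|m|$) is the one nontrivial point and you handle it correctly; the only caveat is that, as with any temperedness statement, the conclusions hold for $\mb P$-a.e.\ $\o$ rather than literally ``for any $\o\in\O$''.
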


\begin{lemma}\label{L:Kingman}
{\rm(Kingman's Subadditive Ergodic Theorem).} $(M,\Sigma,\r)$ denotes a
probability space, and $f:\ M\to M $ a measurable map preserving
$\r$. Let $\{F_n\}_{n>0}$ be a sequence of measurable functions from
$M$ to $\mb{R}\bigcup \{-\infty\}$ satisfying the conditions:
\begin{align*}
&(a)\ integrability:\ F_1^+\in L^1(M,\Sigma,\r),\\ &(b)\
subadditivity:\ F_{m+n}\leq F_m+F_n\circ f^m almost\ everywhere.
\end{align*}
Then there exists an $f$-invariant measurable function $F:\
M\to\mb{R}\bigcup \{-\infty\}$ such that $F^+\in L^1(M,\S,\r)$,
$$\ds \lim_{n\to\infty}\frac1n\, F_n=F\ a.e.$$ and
$$\lim_{n\to\infty} \frac1n \int F_n(x)\r(dx)= \inf_{n\in \mathbb{N}}\frac1n
\int F_n(x)\r(dx)= \int F(x)\r(dx).$$
\end{lemma}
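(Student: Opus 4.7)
The plan is to follow the now-standard stopping-time/selection approach to Kingman's theorem, along the lines of Katznelson--Weiss or Steele. First I would reduce to the case that $f$ is ergodic by passing to the ergodic decomposition of $\rho$; all inequalities survive this reduction, and the candidate limit $F$ then becomes an essentially constant function on each component. Next define
$$\underline F(x) = \liminf_{n \to \infty} \frac{1}{n} F_n(x), \qquad \overline F(x) = \limsup_{n \to \infty} \frac{1}{n} F_n(x).$$
Using $F_{m+n} \le F_m + F_n \circ f^m$ with $m$ fixed and $n \to \infty$, together with the fact that $F_m/(m+n) \to 0$ a.e.\ (the positive parts $F_n^+/n$ are dominated by the Birkhoff average of the integrable function $F_1^+$, since $F_n \le \sum_{k=0}^{n-1} F_1 \circ f^k$), one sees that both $\underline F$ and $\overline F$ are $f$-invariant, hence a.s.\ constants $\underline c$ and $\overline c$ under the ergodicity reduction. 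Meanwhile the sequence $a_n := \int F_n \, d\rho$ is itself subadditive in $n$, so by Fekete's lemma $\lim_n a_n/n = \inf_n a_n/n =: L$ exists in $[-\infty, +\infty)$.

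The easy half is the Fatou direction: since $F_n^+/n$ is dominated in $L^1$ by the Birkhoff average of $F_1^+$, Fatou's lemma gives $\int \underline F \, d\rho \le \liminf_n a_n/n = L$, whence $\underline c \le L$. The heart of the argument is the reverse inequality $L \le \overline c$. For this I would fix $\epsilon > 0$ and a truncation level $-A < 0$ and set
$$\tau(x) = \min\bigl\{n \ge 1 \colon F_n(x) \le n\bigl(\max(\overline F(x), -A) + \epsilon\bigr)\bigr\},$$
which is a.s.\ finite by definition of $\limsup$. Choose $N$ large enough that $B := \{\tau > N\}$ has $\rho(B) < \epsilon$, redefine $\tau = 1$ on $B$, and for each $x$ build the stopping times $n_0 = 0$, $n_{k+1} = n_k + \tau(f^{n_k} x)$. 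Iterating subadditivity along this partition of $\{0, \dots, n-1\}$ yields
$$F_n(x) \le \sum_{k \colon n_{k+1} \le n} F_{\tau(f^{n_k} x)}(f^{n_k} x) + R_n(x),$$
where $R_n$ collects contributions from the final incomplete block and from the blocks where we had to reset $\tau$ to $1$ because $x$ landed in $B$. Dividing by $n$ and applying Birkhoff's ergodic theorem, the main sum is controlled by the $\rho$-average of $\tau \cdot (\max(\overline F, -A) + \epsilon)$, while $R_n/n$ is controlled by the $\rho$-average of $F_1^+ \mathbf{1}_B$ plus an $O(N/n)$ term. Letting $n \to \infty$, then $\epsilon \to 0$ and $A \to \infty$, yields $L \le \overline c$.

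Combining the two halves forces $\underline c = \overline c = L$ a.e., which is the pointwise convergence statement; the $L^1$ convergence of $\int F_n/n \, d\rho$ to $\int F \, d\rho$ then follows from the Fekete computation together with the monotone convergence used when sending $A \to \infty$. The main obstacle is purely the bookkeeping in the stopping-time step: one has to verify measurability of $\tau$, track the remainder $R_n$ through the rare event $B$, and handle the possibility that $F_n$ takes the value $-\infty$ (which is exactly why the truncation $\max(\overline F, -A)$ is introduced and only removed at the very end). Once these technicalities are dispatched the argument is essentially formal.
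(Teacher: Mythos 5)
The paper offers no proof of this lemma: it is stated in Appendix~A as a known result and used as a black box, so the only thing to assess is the internal correctness of your outline. The template you chose (Katznelson--Weiss/Steele stopping times) is the right one, but as written the two inequalities you establish point in the wrong directions and do not combine. Writing $\underline{c}=\liminf_n F_n/n$, $\overline{c}=\limsup_n F_n/n$ (constants after the ergodicity reduction) and $L=\inf_n\frac1n\int F_n\,d\rho$, you prove $\underline{c}\le L$ and $L\le\overline{c}$. Together these say only $\underline{c}\le L\le\overline{c}$, which is weaker than the trivial $\underline{c}\le\overline{c}$ and forces nothing; convergence requires the opposite sandwich $\overline{c}\le L\le\underline{c}$. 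The stopping time must therefore be built from the \emph{liminf}: with $\tau(x)=\min\{n\ge1:\ F_n(x)\le n(\max(\underline{F}(x),-A)+\epsilon)\}$ (finite a.s.\ precisely because $\underline{F}$ is a liminf, so infinitely many $n$ qualify), the block decomposition and Birkhoff give $L\le\int\max(\underline{F},-A)\,d\rho+O(\epsilon)$ and hence $L\le\underline{c}$. With $\overline{F}$ in the definition, the identical computation only yields $L\le\overline{c}$, which is the vacuous half.

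The ``Fatou direction'' is also not available as stated. Your domination is from \emph{above} ($F_n/n\le\frac1n\sum_{k<n}F_1^+\circ f^k$), so the only Fatou-type conclusion it supports is the reverse inequality $\limsup_n\frac1n\int F_n\,d\rho\le\int\overline{F}\,d\rho$, i.e.\ once more $L\le\overline{c}$; the inequality $\int\liminf\le\liminf\int$ you invoke needs an integrable \emph{lower} bound, which the $F_n$ need not have (they may equal $-\infty$ on a set of positive measure, and $L=-\infty$ is allowed). The missing inequality $\overline{c}\le L$ is obtained differently: fix $m$, use $F_{km}\le\sum_{j=0}^{k-1}F_m\circ f^{jm}$ and the Birkhoff theorem for $f^m$ applied to a lower truncation of $F_m$ to get $\overline{F}\le\frac1m E[F_m\mid\mathcal{I}_m]$ a.e., then integrate and take $\inf_m$. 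Two smaller points: the reduction to the ergodic case by ergodic decomposition presupposes that $(M,\Sigma,\rho)$ is a standard space, which the statement does not assume (the usual proofs avoid it by carrying the invariant functions $\underline{F},\overline{F}$ rather than constants); and the final assertion that $\frac1n\int F_n\to\int F$ is exactly Fekete plus the identity $\int F=L$, so it is only available after the two correct inequalities have been established.
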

\bigskip

\section{Strong normal co-invariant cone-family}\label{S:AppB}
In this appendix, we construct a cone family $\mc C'$ satisfying C1)-C4) based on a given cone family $\mc C$ satisfying C1)-C3). We define that for any $\o\in \O$
\begin{itemize}
\item[a)] $E_0'(\o)=A(\t^{-1}\o)E_0(\t^{-1}\o)$ and $F_0'(\o)=F_0(\o)$;
\item[b)] $C'(\o)=\left\{kv\big|\ |v|=1,k\in \mb R, \inf_{u\in A(\t^{-1}\o)C(\t^{-1}\o)\cap S}|v-u|\le \frac1{4\chi(\t^{-1}\o)}\right\}$.
\end{itemize}
From the definition of $\mc C'$, we have that for $\mb P$-a.e. $\o$,
$$A(\t^{-1}\o)C(\t^{-1}\o)\subset C'(\o)\subset  C(\o).$$
Thus $\mc C'$ satisfies condition C1).\\

By C3), and noting that for any $v,u\in S$, $$\udi(v,\spa\{u\})\le |v-u|\le 2\udi(v,\spa\{u\}),$$
then we have the following
\begin{align*}
&\inf_{u\in A(\o)C(\o)\cap S} \udi(u,X\setminus C'(\t\o))\\
\ge& \frac12 \inf\left\{|v-u|\big|\ u\in A(\o)C(\o)\cap S,v\in (X\setminus C'(\t\o))\cap S\right\}\\
\ge& \frac1{8\chi(\o)},
\end{align*}
and
\begin{align*}
&\udi(F_0(\o),C'(\o))\\
\ge&\frac12\inf\left\{|v-u|\big|\ v\in F_0(\o)\cap S, u\in C'(\o)\cap S\right\}\\
\ge &\frac12\left(\udi(A(\o)C(\o),X\setminus C(\o))-\sup\left\{|v-u|\big|\ v\in A(\o)C(\o)\cap S, u\in C'(\t\o)\cap S\right\}\right)\\
\ge& \frac3{8\chi(\o)}.
\end{align*}
So by taking $\chi'=\frac18\chi$, we have shown that $\mc C'$ satisfies C3) and C4). It remains to show C2).\\

By the strong measurability of $A:\O\to L(X)$ and separability of $X$, $A(\t^{-1}\cdot)E_0(\t^{-1}\cdot)$ is measurable. Actually, there exist random variables in $X$,
$\{v_i(\cdot)\}_{1\le i\le \dim E_0}$, such that $E_0(\cdot)=\spa\{v_i(\cdot)\}_{1\le i\le \dim E_0}$ and
$$|v_i(\cdot)|=1, \udi(v_i(\cdot),\spa\{v_j(\cdot)\}_{j\neq i})>\frac12,\ \forall 1\le i\le \dim E_0.$$
For details we refer the reader to \cite{LL}. Then by separability of $X$, $v_i(\cdot)$ can be approached by a sequence of simple functions $\{v^n_i(\cdot)\}_{n\ge 1}$ (piecewisely constant function) whose image under $A(\cdot)$ is also a measurable function by strong measurability of $A$. For large enough $n$, $\dim\spa\{A(\cdot)v^n_i(\cdot)\}=\dim E_0$, and $E^n(\cdot):= \spa\{A(\t^{-1}\cdot)v^n_i(\t^{-1}\cdot)\}$ is measurable (also refer to \cite{LL}), which converges to $E'_0(\cdot)$ as $n\to \infty$, thus $E'_0$ is measurable.\\
The next we show that the projections, $\pi_{E'_0}'(\cdot),\pi_{F_0}'(\cdot)$, associated to splitting $X=E'(\cdot) \oplus F_0(\cdot)$ are strongly measurable. Let $\{v'_i(\cdot)\}_{1\le i\le \dim E_0}$ be measurable unit basis of $E'_0(\cdot)$, and $\pi_{E_0}(\cdot),\pi_{F_0}(\cdot)$ be projections associated to splitting $X=E_0(\cdot)\oplus F_0(\cdot)$. Since, for $\mb P$-a.e. $\o$, $E'_0(\o)\cap F_0(\o)=0$, $\left\{\frac{\pi_{E_0}(\o)v'_i(\o)}{|\pi_{E_0}(\o)v'_i(\o)|}\right\}_{1\le i\le \dim E_0}$ forms a unit base of $E_0(\o)$ and varies measurably on $\o$. For any $v\in X$ and $\mb P$-a.e. $\o$, we have that
\begin{align}\begin{split}\label{E:ProjectionE'}
v&=\pi_{E_0}(\o)v+\pi_{F_0}(\o)v\\
&=\sum_{i=1}^{\dim E_0} a_i^\o(\pi_{E_0}(\o)v)\frac{\pi_{E_0}(\o)v'_i(\o)}{|\pi_{E_0}(\o)v'_i(\o)|}+\pi_{F_0}(\o)v\\
&=\sum_{i=1}^{\dim E_0}\frac{a_i^\o(\pi_{E_0}(\o)v)}{|\pi_{E_0}(\o)v'_i(\o)|}v_i'(\o)-\sum_{i=1}^{\dim E_0} \frac{a_i^\o(\pi_{E_0}(\o)v)}{|\pi_{E_0}(\o)v'_i(\o)|}\pi_{F_0}(\o)v_i'(\o)+\pi_{F_0}(\o)v,
\end{split}
\end{align}
where $\{a_i^\o(\pi_{E_0}(\o)v)\}_{1\le i\le \dim E_0}\subset \mb R$ are the unique coordinates of $\pi_{E_0}(\o)v$ under the base  $\left\{\frac{\pi_{E_0}(\o)v'_i(\o)}{|\pi_{E_0}(\o)v'_i(\o)|}\right\}_{1\le i\le \dim E_0}$. Each $a_i^\o$ can be viewed as a bounded linear functional defined on $E'_0(\o)$. By Lemma 7.5 in \cite{LL}, each $a_i^\o$ can be extended to a functional in $X^*$ and varies strongly measurable on $\o$. We also denote the extended functional by $a_i^\o$ without making any confusion. By (\ref{E:ProjectionE'}), we have that
$$\pi_{E_0'}'(\cdot)=\sum_{i=1}^{\dim E_0}\frac{a_i^{\cdot}\circ \pi_{E_0}(\cdot)}{|\pi_{E_0}(\cdot)v'_i(\cdot)|},$$
which is strongly measurable by the separability of $X$ and strong measurability of $\pi_{E_0}(\cdot)$ and measurability of $v_i'(\cdot)$.

 \addcontentsline{toc}{section}{References}

\bibliographystyle{plain}

\end{document}